\newcommand{\corr}[1]{\langle {#1} \rangle}
\newcommand{\corrr}[1]{\langle\langle {#1} \rangle\rangle}
\newcommand{\bt}{{\bf t}}
 \newcommand{\bZ}{\mathbb{Z}}
 \newcommand{\bC}{\mathbb{C}}
 \newcommand{\pd}{\partial}
\newcommand{\Mbar}{\overline{\mathcal M}}
 \newcommand{\bu}{\mathbf{u}}
\newcommand{\vac}{|0\rangle} \newcommand{\lvac}{\langle 0|}
  \DeclareMathOperator{\res}{res}  
    \DeclareMathOperator{\erf}{erf}
\newcommand{\be}{\begin{equation}}
\newcommand{\ee}{\end{equation}}
\newcommand{\bea}{\begin{eqnarray}}
\newcommand{\eea}{\end{eqnarray}}
\newcommand{\ben}{\begin{eqnarray*}}
\newcommand{\een}{\end{eqnarray*}}
\newcommand{\half}{\frac{1}{2}}
\newtheorem{cor}{Corollary}[section]
\newtheorem{conj}[cor]{Conjecture}
\newtheorem{lem}[cor]{Lemma}
 \newtheorem{prop}[cor]{Proposition}
 \newtheorem{thm}[cor]{Theorem}
\theoremstyle{remark}
 \newtheorem{ex}[cor]{Example}
\definecolor{A}{rgb}{.75,1,.75}
\definecolor{green}{rgb}{0,1,0}
\definecolor{yellow}{rgb}{1,1,0}
\definecolor{orange}{rgb}{1,.7,0}
\definecolor{red}{rgb}{1,0,0}
\definecolor{white}{rgb}{1,1,1}
\begin{document}
\title[Quantum Deformation Theory and Mirror Symmetry]
{Quantum Deformation Theory of the Airy Curve and Mirror Symmetry of a Point}

\author{Jian Zhou}
\address{Department of Mathematical Sciences\\Tsinghua University\\Beijng, 100084, China}
\email{jzhou@math.tsinghua.edu.cn}

\begin{abstract}
We present a quantum deformation theory of the Airy curve and use it to establish a version of mirror symmetry of a point.
\end{abstract}

\maketitle

\section{Introduction}

``To see a world in a grain of sand" is the first line of a famous poem by William Blake.
For the author of this paper,
the grain of sand is just a point,
and the world he wants to see is mirror symmetry.
More precisely,
he wants to gain some more understanding on mirror symmetry by studying the mirror symmetry of a point.

It may sound absurd to  consider mirror symmetry of a point,
because mirror symmetry usually involves Calabi-Yau 3-folds \cite{CDGP},
or Fano manifolds and their mirror geometries.
As a space, a point is as simple as one can get (except for perhaps the empty space),
and it seems that it does not have any interesting structure that be used to produce
a mirror partner.
Nevertheless,
the Gromov-Witten theory of a point is very rich and does have a mirror theory encoded in the Airy curve:
\be
y = \half x^2,
\ee
that is the subject of this paper.
This simple curve that everyone sees in junior high school turns out to have a rich 
deformation theory that can be used as the mirror of  the theory of 2D topological gravity,
in more than one way.

For early history on mirror symmetry we refer to \cite{Hori-Vafa}.
Let us recall some highlights that are are particularly relevant to  this work.
The original proposal of mirror symmetry relates the deformations of symplectic structure
on one Calabi-Yau 3-fold with the deformations of the complex structure on its mirror
Calabi-Yau 3-fold.
Based on mirror symmetry,
physicists made predictions on counting the number of holomorphic curves
in quintic threefolds in genus zero \cite{CDGP}.
For $g > 0$,
physicists have developed holomorphic anomaly equation to make predictions
in genus one \cite{BCOV1}, genus 2 \cite{BCOV2},
and in genus $g \leq 51$ \cite{Hua-Kle-Qua}.
Mathematically,
these predictions have been proved in genus zero \cite{Givental, LLY} and genus one \cite{Zinger}.
A physical reformulation of the holomorphic anomaly equation was given in \cite{Witten-Q-background}.
A mathematical formulation of quantum BCOV theory has been developed \cite{Costello-Li}.

Parallel to the mirror symmetry of compact Calabi-Yau 3-folds,
one can consider the local mirror symmetry of toric Calabi-Yau 3-folds \cite{CKYZ}.
Based on duality with Chern-Simons theory of link invariants \cite{Witten-CS-Jones, Witten-CS-String},
a physical theory of topological vertex has been developed to compute local Gromov-Witten
invariants in all genera \cite{AMV, AKMV}.
A mathematical theory of the topological vertex has been developed in \cite{LLZ, LLZ2, LLLZ}.

Even though the theory of topological vertex solves the problem of computing local Gromov-Witten invariant
of toric Calabi-Yau 3-folds in principle,
its combinatorial nature does not  match directly  with the deformation theory aspects  of mirror symmetry
of compact Calabi-Yau 3-folds,
therefore its success in all genera does not provide much insight for its compact counterpart,
except for perhaps the Gopakumar-Vafa invariants \cite{GV, Peng, Guo-Zhou1, Guo-Zhou2}.
Eleven years ago, physicists \cite{ADKMV} have proposed a mathematically mysterious approach based on deformation at infinities of local mirror curves.
More recently local mirror curves have been used as the spectral curve to apply
the appearance of Eynard-Orantin topological recursion \cite{EO} in the program of remodelling the local B-models \cite{Marino, BKMP, EO2}.
Deformations of the local mirror curves do not play a role in this topological recursion formalism either.
The renewed interest in local mirror symmetry and local mirror curves motivates the author
to present a mathematical elaboration of the beautiful ideas in \cite{ADKMV},
with an emphasis on the deformation theory aspects.
Through this one can gain more insights about the mirror symmetry in the compact Calabi-Yau case.

In physics Gromov-Witten theory of  a point corresponds to the theory of 2D topological gravity
originally studied using matrix models,
and mathematically they correspond to intersection numbers of $\psi$-classes
on Deligne-Mumford moduli spaces.
Witten's remarkable conjecture relates such numbers
to KdV hierarchy and Virasoro constraints.
Since its first proof by Kontsevich \cite{Kontsevich} it has served as a paradigm for researches
in Gromov-Witten theory and its generalizations.
Recent progresses on topological recursions related to
the Witten-Kontsevich tau-function \cite{Eynard, BCSW,  Zhou1, Zhou2}
have made it clear that the {\em Airy curve}:
$$
y = \half x^2,
$$
plays an important role in this theory.
In this paper we will elaborate on the fact that all information about the correlation
functions of the 2D topological gravity
is encoded in the Airy curve,
by suitable procedures of deformation and quantization,
therefore,
this curve  serves as the mirror geometry in the B-theory for the point.
This is an elaboration of some ideas in \cite{ADKMV}
where some technical details are missing.
This will provide the prototype of quantum deformation theory of a class of curves which we will develop
in subsequent work.
Our ultimate goal will be to develop a version of quantum deformation to unite
the two main driving forces  in Gromov-Witten type theories:
The Witten Conjecture/Kontsevich Theorem
and the computation of Gromov-Witten invariants of quintic Calabi-Yau 3-folds.

Deformation theory was created by Riemann in his famous paper on Abelian functions
where he introduced the notion of moduli spaces of Riemann surfaces.
Deformation theory of compact complex manifolds was developed by Kodaira, Spencer and Kuranishi.
There is also a deformation theory of isolated singularities.
It is amazing that such theories all have found applications in string theory related to mirror symmetry.
In the study of mirror symmetry of quintic Calabi-Yau 3-fold,
deformation of its mirror has been applied to find genus zero free energy.
We will somehow reverse this direction of applications:
We will first use free energy in genus zero to construct the deformation of the Airy curve,
then we will apply the canonical quantization derive the Virasoro constraints satisfied
by free energies in higher genera.
Generalizations to Type A and Type D singularities
are straightforward and will be presented in future publications.
Generalizations to other case are work in progress.

We now summarize the main results of this paper.
We first consider the miniversal deformation
$y = \half x^2 + u_0$ of the Airy curve $y= \half x^2$.
Write $f^2 = 2y$,
then we prove the following identity:
\be
x = f(1-\frac{2u_0}{f^2})^{1/2}
= f - \frac{u_0}{f}
- \sum_{n =0}^\infty \frac{\pd F_0}{\pd u_n}(u_0, 0, \dots) \cdot f^{-(2n+3)},
\ee
where $F_0$ is the genus zero free energy of the theory of 2D topological gravity,
in suitably chosen coordinates $u_n$'s.
Next we construct a special deformation of the Airy curve of the following form:
\be \label{eqn:Special def}
x = f - \sum_{n \geq 0} (2n+1) u_n f^{2n-1} - \sum_{n \geq 0} \frac{\pd F_0}{\pd u_n}(\bu) \cdot f^{-2n-3}.
\ee
We prove that it is uniquely  characterized by the following property:
\be
(x^2(f))_- = 0.
\ee
From \eqref{eqn:Special def} we obtain a special deformation
of the equation of the Airy curve of the form:
$$ 2y = f^2 = c_{0} + c_1 x^2 + c_2 x^4 + c_3 x^6 + \cdots.$$
We regard $W =y$ as the deformed superpotential function and define
\be
\phi_n: = \frac{\pd W}{\pd u_n}.
\ee
They generate an algebra over the ring of  formal series in $u_n$'s,
 closed under multiplications.
We take this as a generalization of the Milnor ring.
Furthermore,
if one defines
\be
\nabla^\lambda_{\frac{\pd}{\pd t_i}} \phi_k := \pd_{t_i} \phi_k + \lambda \phi_i \cdot \phi_k.
\ee
then the operators $\nabla^\lambda_{\frac{\pd}{\pd t_i}}$ define a family of flat connections,
i.e.,
\be
\nabla^\lambda_{\frac{\pd}{\pd t_i}} \nabla^\lambda_{\frac{\pd}{\pd t_j}}\phi_k
= \nabla^\lambda_{\frac{\pd}{\pd t_j}} \nabla^\lambda_{\frac{\pd}{\pd t_i}}\phi_k,
\ee
for $i,j,k \geq 0$ and all $\lambda \in \bC$.
We then define $\corrr{\phi_{j_1}, \cdots, \phi_{j_n}}_0$ as in Landau-Ginzburg theory
and show that
\be
\corrr{\phi_{j_1}, \cdots, \phi_{j_n}}_0
= \frac{\pd^n F_0}{\pd t_{j_1} \cdots \pd t_{j_n}}.
\ee
To extend the picture to arbitrary genera,
we quantize the above picture.
We endow the space   of series of the form
\be
\sum_{n =0}^\infty (2n+1) \tilde{u}_n z^{(2n-1)/2}
+ \sum_{n =0}^\infty \tilde{v}_n z^{-(2n+3)/2}
\ee
 the following symplectic structure:
\be
\omega = \sum_{n =0}^\infty   d\tilde{u}_n \wedge d \tilde{v}_n.
\ee
Take the natural polarization that $\{q_n = \tilde{u}_n\}$ and $\{p_n = \tilde{v}_n\}$,
one can consider the canonical quantization:
\be
\hat{\tilde{u}}_n = \tilde{u}_n \cdot, \;\;\; \hat{\tilde{v}}_n = \frac{\pd}{\pd \tilde{u}_n}.
\ee
Corresponding to the field $x$,
we consider the following fields of operators on the Airy curve:
\be
\hat{x}(z) = - \sum_{m \in \bZ} \beta_{-(2m+1)} z^{m-1/2}
= - \sum_{m \in \bZ} \beta_{2m+1} z^{-m-3/2}
\ee
where $f = z^{1/2}$ and the operators $\beta_{2k+1}$ are defined by:
\be
\beta_{-(2k+1)} = (2k+1) \tilde{u}_k \cdot, \;\;\;\; \beta_{2k+1} = \frac{\pd}{\pd \tilde{u}_k}.
\ee
We define a notion of regularized products $\hat{x}(z)^{\odot n}$
and show that they are related to the normally ordered products $:\hat{x}(z)^n:$ by Bessel numbers.
Then we show that the DVV Virasoro constraints satisfied by the
Witten-Kontsevich tau-function
is just the following equation:
\be
\hat{x}(z)^{\odot 2} Z_{WK} = 0.
\ee
We conjecture that
\be
\hat{x}(z)^{\odot 2n} Z_{WK} = 0
\ee
for $n > 1$.
In forthcoming work,
we will generalize such results to deformations of isolated singularities of  Type A and Type D.

Roughly speaking,
what we mean by quantum deformation theory
is a deformation theory whose moduli space  encodes the information of genus zero free energy 
on the big phase space.
It is then necessary that the moduli space is infinite-dimensional.
Furthermore,
we require that the moduli space  admits a natural quantization from which one can produce constraints that determines the free energy in 
all genera.
This is the point of view that we will take to understand mirror symmetry in subsequent work, 
both for noncompact and compact Calabi-Yau manifolds.

The rest of the paper is arranged as follows.
In  \S \ref{sec:Prelim} we fix some terminologies and recall some combinatorial preliminaries.
We recall the Witten Conjeture/Kontsevich Theorem and DVV Virasoro constraints in \S \ref{sec:A-Theory},
some explicit numerical computations based on formal series solution of the inviscid Burgers' equation
are also presented.
We construct in \S 4 the special deformation of the Airy curve based on
genus zero Gromov-Witten invariants of a point.
In \S 5 we study the Landau-Ginzburg type theory associated to the miniversal deformation of the Airy curve
and make extensions to the special deformation in \S 6.
In \S 7 and \S 8 we present the quantum deformation of the Airy curve by quantization $\hat{x}$ of the field $x$,
define and study its regularized products relating to Virasoro constraints and its generalizations.

\section{Preliminaries}
\label{sec:Prelim}

In this paper we will discuss the mirror symmetry between two ``field theories".
We will use this section to make a provisional definition of what we mean by a field theory
to make our framework as simple as possible.
We will also collect some combinatorial techniques crucial for our computations and proofs.

\subsection{Some terminologies of field theory}

The physical background for our definition is the topological matters coupled with 2D topological gravity.
The definition that we will take below does not reflect all the interesting aspects of such theories,
it only serves to suit for the discussions in this work.
The reader may consult \cite{Kontsevich-Manin} for a more general definition.

First,
we need a finite-dimensional complex vector space $V$.
It will be called the {\em small phase space},
and we will call a vector $v \in V$ a {\em primary field}.
It will also be referred to as a {\em matter field}.
The small phase space is supposed to be graded by rational numbers:
\be
V = \bigoplus_{i=1}^k V_{w_i},
\ee
where $w_1 < w_2 < \cdots < w_k$ are distinct rational numbers.
If $v \in V_{w_j}$,
then we say {\em $v$ is a homogeneous filed of degree $w_j$},
and write
$$\deg v = w_j.$$
By the {\em big phase space}
we mean $\bC[z]\otimes_\bC V$.
We will write this space as $V[z]$.
(Usually one reserves this notation for $V$ be a commutative ring so that $V[z]$ is also a ring.
A field theory often naturally makes $V$ a ring.)
An element of this space can be written as
\ben
&& v_0 + v_1z + \cdots + v_nz^n,
\een
where $v_0, \dots, v_n\in V$.
For $v \in V$,
the element $vz^n$ will be called the {\em $n$-th gravitational descendant (field)} of $v$.
Following Witten,
we will also denote $vz^n$ by $\tau_n(v)$.
The grading on $V$ induces a grading $V[z]$:
$$V[z] = \bigoplus_{n \geq 0} \bigoplus_{i=1}^k V_{w_i}z^n,
$$
i.e.,
if $v\in V_{w_j}$ has $\deg v = w_j$,
then one sets
$$\deg \tau_n(v) = w_j + n.$$
By a field theory of the topological matters coupled to 2D topological gravity,
we simply mean a collection of {\em correlators}: For $2g-2+n > 0$,
\be
\corr{\cdot, \cdots, \cdot}_g: V[z]^{\otimes n} \to \bC,
\ee
$$(\tau_{m_1}(v_1), \dots, \tau_{m_n}(v_n)) \mapsto \corr{\tau_{m_1}(v_1), \dots, \tau_{m_n}(v_n)}_g.$$
They are required to satisfy the following two conditions:
The correlators are symmetric in its arguments, i.e.,
\be
\corr{\tau_{m_{\sigma(1)}}(v_{\sigma(1)}), \dots, \tau_{m_{\sigma(n)}}(v_{\sigma(n)})}_g
= \corr{\tau_{m_1}(v_1), \dots, \tau_{m_n}(v_n)}_g,
\ee
for any permutation $\sigma: \{1, \dots, n\} \to \{1, \dots, n\}$;
the correlators satisfy the following {\em selection rule}:
Suppose that $v_1, \dots, v_n$ are homogeneous fields,
then
$$
\corr{\tau_{m_1}(v_1), \dots, \tau_{m_n}(v_n)}_g = 0
$$
except for
\be
\sum_{i=1}^n (m_i +w_i) = 3g-3+n.
\ee
Take a basis $\{e_1, \dots, e_m\}$ of $V$ consisting of homogeneous elements,
then one gets a basis $\{\tau_n(e_i)\}_{n \geq 0, 1\leq i \leq m}$ of $V[z]$.
The corresponding linear coordinates $\{t_{n,i}\}_{n \geq 0, 1\leq i \leq m}$
will be called {\em coupling constants}.
Define the {\em free energies in genus $g$} by:
\be
F_0(\bt) = \sum_{n \geq 3} \sum_{\substack{m_1, \dots, m_n \geq 0 \\ 1\leq i_1, \dots, i_n \leq m}}
\frac{t_{m_1, i_1} \cdots t_{m_n, i_n}}{n!}
\corr{\tau_{m_1}(e_{i_1}), \dots, \tau_{m_n}(e_{i_n})}_0,
\ee
and for $g \geq 1$,
\be
F_g(\bt) = \sum_{n \geq 1} \sum_{\substack{m_1, \dots, m_n \geq 0 \\ 1\leq i_1, \dots, i_n \leq m}}
\frac{t_{m_1, i_1} \cdots t_{m_n, i_n}}{n!}
\corr{\tau_{m_1}(e_{i_1}), \dots, \tau_{m_n}(e_{i_n}))}_g.
\ee
The {\em partition function} is defined by:
\be
Z(\bt; \lambda) = \exp \sum_{ g\geq 0} \lambda^{2g-2} F_g(\bt).
\ee
Define the {\em deformed $n$-point correlators} by:
\be
\corrr{\tau_{m_1}(e_{i_1}), \dots, \tau_{m_n}(e_{i_n})}_g(\bt)
=  \frac{\pd^n}{\pd t_{m_1, i_1} \cdots \pd t_{m_n, i_n}} F_g(\bt).
\ee
From this definition it is clear that
\be
\begin{split}
& \corrr{\tau_{m_1}(e_{i_1}), \dots, \tau_{m_n}(e_{i_n}), \tau_{m_{n+1}}(e_{i_{n+1})}}_g(\bt) \\
= & \frac{\pd}{\pd t_{m_{n+1}, i_{n+1} } }
\corrr{\tau_{m_1}(e_{i_1}), \dots, \tau_{m_n}(e_{i_n})}_g(\bt).
\end{split}
\ee
Hence to determine the theory,
it suffices to determine the deformed one-point functions
$\corrr{\tau_m(e_i)}_g(\bt)$.

\subsection{Lagrangian inversion}

Suppose that we have two formal power series:
\bea
&& y = x + a_2 x^2 + a_3 x^3 + \cdots, \\
&& x = y + b_2 y^2 + b_3 y^3 + \cdots
\eea
are compositional inverse to each other.
Then their coefficients are related to each other by the Lagrange inversion formula:
\bea
&& a_n = \frac{1}{n} \res (\frac{dy}{x^n}) = \frac{1}{n} (y+b_2y^2 + \cdots)^{-n}|_{y^{-1}},   \\
&& b_n = \frac{1}{n} \res (\frac{dx}{y^n}) = \frac{1}{n} (x+a_2x^2 + \cdots)^{-n}|_{x^{-1}},
\eea
The following are some explicit examples:
\ben
&& a_2 = - b_2, \\
&& a_3 = - b_3 + 2 b_2^2, \\
&& a_4 = - b_4 + 5b_2b_3 - 5 b_2^3.
\een
If one assigns
\be
\deg b_n = 1- n, \;\; n \geq 2,
\ee
then $a_n$ is a weighted homogeneous polynomial in $b_2, \dots, b_n$ of degree $1-n$.
We will refer to $\{a_n\}_{n \geq 2}$ and $\{b_n\}_{n \geq 2}$ as Lagrange dual of each other.

\section{The A-Theory: Intersection Theory on Moduli Spaces of Algebraic Curves}

\label{sec:A-Theory}

In this section,
we recall the A-theory of a point,
i.e.,
the intersection theory of $\psi$-classes on Deligne-Mumford moduli spaces $\Mbar_{g,n}$.
In the physics literature (see e.g.  \cite{Witten1}),
this corresponds to the 2D topological gravity.
Main features of this theory are the Witten Conjecture/Kontsevich Theorem \cite{Witten1, Kontsevich}
and the Virasoro constraints \cite{DVV-Virasoro}.
These results tie this theory to the KdV hierarchy,
and  hence the genus zero part to the dispersionless limit of the KdV hierarchy.
One can reduce the genus zero part of the theory to inviscid Burger's equation
and its series solution by Lagrange inversion.
We also derive formula \eqref{eqn:Der-F0} which is the starting point
for mirror symmetry with quantum deformation theory of the Airy curve.

\subsection{Free energy and partition function of 2D topological gravity}

The moduli spaces for 2D topological gravity are the Deligne-Mumford spaces $\Mbar_{g,n}$
of stable algebraic curves with $n$ marked points.
Let $\psi_1, \dots, \psi_n$ be the first Chern classes of the cotangent line bundles
corresponding to the $n$ marked points.
The correlators of the 2D topological gravity are defined as the following intersection numbers:
\be
\corr{\tau_{a_1}, \cdots, \tau_{a_n}}_g :=
\int_{\Mbar_{g,n}} \psi_1^{a_1} \cdots \psi_n^{a_n}.
\ee
The correlator $\corr{\tau_{a_1}, \cdots, \tau_{a_n}}_g \neq 0$ only if
\be \label{eqn:SelRule}
a_1 + \cdots + a_n = 3g-3 + n.
\ee
This is due to the fact that
\be
\dim \Mbar_{g,n} = 3g-3+n.
\ee

The free energy of 2D topological gravity is defined by
\be
F(\bt; \lambda) = \sum_{g \geq 0} \lambda^{2g-2} F_g(\bt).
\ee
where the genus $g$ part of the free energy is defined by:
\be
F_g(\bt) = \corr{\exp \sum_{a \geq 0} t_a \tau_a }_g
= \sum_{n \geq 0} \frac{1}{n!} \sum_{a_1, \dots, a_n \geq 0} t_{a_1} \cdots t_{a_n}
\corr{ \tau_{a_1}, \cdots, \tau_{a_n}}_g.
\ee
We will assign the following grading:
\be
\deg t_i = 2 - 2i, \;\;\; i =0,1,\dots.
\ee
By \eqref{eqn:SelRule},
$F_g(\bt)$ is weighted homogeneous of degree
\be
\deg F_g = 6-6g.
\ee
The partition function of 2D topological gravity,
often referred to as the  Witten-Kontsevich tau-function, is defined by
\be
Z_{WK}(\bt, \lambda) = \exp F(\bt, \lambda).
\ee
We assign
\be
\deg \lambda = 3
\ee
so that $Z_{WK}$ is weighted homogenous of degree $0$.

\subsection{Witten Conjecture/Kontsevich Theorem}

Witten \cite{Witten1} conjectured that $Z_{WK}$ is a tau-function of the KdV hierarchy.
More precisely,
define a sequence $\{ R_n\}$ of differential polynomials in $u$ as follows:
\be
\begin{split}
& R_1 = u, \\
& \frac{\pd}{\pd x} R_{n+1} = \frac{1}{2n+1} \biggl(\pd_xu \cdot R_n
+ 2u \cdot \pd_x R_n + \frac{\lambda^2}{ 4} \pd_x^3R_n \biggr).
\end{split}
\ee
For example,
\ben
\pd_x R_2 & = & u \cdot \pd_x u + \frac{1}{12} \pd_x^3 u, \\
R_2 & = & \frac{1}{2} u^2 + \frac{1}{12} \pd_x^2u,  \\
\pd_x R_3 & = & \frac{1}{2} u^2\cdot \pd_x u + \frac{1}{12} u \cdot \pd_x^3u + \frac{1}{6}\pd_x u \cdot \pd_x^2u + \frac{1}{240}\pd_x^5u, \\
R_3 & = & \frac{1}{6} u^3 + \frac{1}{12} u \cdot \pd_x^2 u + \frac{1}{24} (\pd_x u)^2 + \frac{1}{240} \pd_x^4 u.
\een
Then
\be
u = \lambda^2 \frac{\pd F}{\pd^2 t_0^2}
\ee
satisfies the following sequence of equations:
$$\pd_{t_n} u = \pd_{t_0} R_{n+1},$$
where $t_0 = x$.

He also pointed out that together with the string equation:
\be
\corr{\tau_0 \tau_{a_1} \cdots \tau_{a_n}}_g  =
\sum_{i=1}^n \corr{\tau_{a_1} \cdots \tau_{a_i-1} \cdots \tau_{a_n}}_g,
\ee
this hierarchy of nonlinear differential equations uniquely determines $Z_{WK}$ from the initial values:
\ben
\int_{\Mbar_{0,3}} 1 = 1, \qquad \int_{\Mbar_{1,1}} \psi_1 = \frac{1}{24},
\een
After Kontsevich \cite{Kontsevich},
there have appeared many proofs of this conjecture.

\subsection{Dispersionless limits and reduction to inviscid Burgers' equation}
Write
$$u = u_{(0)} + \lambda^2 u_{(1)} + \cdots,$$
Then from the KdV hierarchy one gets the following sequence of equations for $u_{(0)}$:
\be \label{eqn:Dispersionless-Hierarchy}
\pd_{t_n} u_{(0)} = \pd_{t_0} \frac{u_{(0)}^{n+1}}{(n+1)!}.
\ee
Since these flows commute with each other,
one can solve them separately and inductively.

The case of $n =1$ is the inviscid Burgers' equation:
\be
\pd_t u = \pd_x \frac{u^2}{2}.
\ee
The next result shows that all higher equations in the hierarchy \eqref{eqn:Dispersionless-Hierarchy}
can be reduced to it.

\begin{prop}
Suppose that $u(x,t)$ satisfies the equation
\be
\pd_t u = \pd_t \frac{u^{n+1}}{(n+1)!}
\ee
for some $n>1$,
then $v = \frac{u^n}{n!}$ satisfies:
\be
\pd_t v = \pd_x \frac{v^2}{2}.
\ee
\end{prop}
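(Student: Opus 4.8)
The plan is to reduce everything to a direct chain-rule computation, after first reconciling the statement with the dispersionless hierarchy \eqref{eqn:Dispersionless-Hierarchy}. There the $n$-th flow reads $\pd_{t_n} u = \pd_{t_0}\frac{u^{n+1}}{(n+1)!}$, so I read the hypothesis with $t = t_n$ and $x = t_0$, i.e. as $\pd_t u = \pd_x \frac{u^{n+1}}{(n+1)!}$ (the outer operator on the right being $\pd_x$, matching the hierarchy). Expanding the right-hand side by the chain rule gives the convenient form
\be
\pd_t u = \frac{u^n}{n!}\,\pd_x u = v\,\pd_x u,
\ee
which is the one identity I will actually feed into the rest of the argument.

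First I would differentiate $v = u^n/n!$ with respect to $t$ and substitute this form of the hypothesis:
\be
\pd_t v = \frac{u^{n-1}}{(n-1)!}\,\pd_t u = \frac{u^{n-1}}{(n-1)!}\cdot\frac{u^n}{n!}\,\pd_x u = \frac{u^{2n-1}}{(n-1)!\,n!}\,\pd_x u.
\ee
Next I would compute the target right-hand side directly, again by the chain rule:
\be
\pd_x \frac{v^2}{2} = \pd_x \frac{u^{2n}}{2\,(n!)^2} = \frac{n\,u^{2n-1}}{(n!)^2}\,\pd_x u.
\ee
Comparing the two displays, the proposition follows from the elementary factorial identity $n/(n!)^2 = 1/((n-1)!\,n!)$, which makes both expressions equal to $\frac{u^{2n-1}}{(n-1)!\,n!}\,\pd_x u$.

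I do not expect a genuine obstacle here: the content is entirely the chain rule together with bookkeeping of factorials, and the only point requiring care is reading the hypothesis consistently with \eqref{eqn:Dispersionless-Hierarchy} so that the outer derivative on the right is $\pd_x$. Conceptually, the statement simply records that the substitution $v = u^n/n!$ intertwines the $n$-th dispersionless flow with the inviscid Burgers' equation, which is exactly what licenses solving the whole hierarchy by solving Burgers' equation alone.
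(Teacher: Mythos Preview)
Your proof is correct and matches the paper's approach: the paper's own proof consists of the two words ``Easy computations,'' and what you have written is precisely that computation spelled out. You are also right to note (and correct) the evident typo in the hypothesis, reading the outer operator on the right as $\pd_x$ in accordance with the dispersionless hierarchy \eqref{eqn:Dispersionless-Hierarchy}.
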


\begin{proof}
Easy computations.
\end{proof}

\subsection{Series solutions of inviscid Burgers' equation by Lagrangian inversion}

Recall that the inviscid Burgers' equation can be solved by the method of characteristics.
Suppose that the initial value $u(x,0) = u_0(x)$ is given.
If $x(t)$ with $x(0) = x_0$ is a solution of the ordinary differential equation
\be \label{eqn:Characteristic}
\frac{d}{dt} x(t) = - u(x(t), t),
\ee
where $u=u(x,t)$ satisfy $u_t = uu_x$,
then
\ben
\frac{d}{dt} u(x(t), t) = u_x \cdot \frac{d}{dt} x(t) + u_t
= - u_x u + u_t = 0,
\een
and so
\be \label{eqn:Characteristic2}
u(x(t), t) = u(x(0), 0) = u(x_0, 0) = u_0(x_0).
\ee
Now  from \eqref{eqn:Characteristic},
\ben
\frac{d}{dt} x(t) = - u(x(t), t) = - u_0(x_0),
\een
and so
\be
x(t) = x_0 - t u_0(x_0).
\ee
Plug this back into \eqref{eqn:Characteristic2}
and change $x_0$ into $x$:
\be
u(x - t u_0(x), t) = u_0(x).
\ee
To see this provides formal series solutions,
we set
\be
A(x, t) = x + t \cdot u(x, t).
\ee
Then one has
\ben
&& A(x- t u_0(x), t) = (x- t u_0(x)) + t u(x_0-t u_0(x), t) \\
& = & x - t u_0(x) + t u_0(x) = x.
\een
So we get the following:

\begin{prop} \label{prop:Lagrange}
Suppose that $A(y,t)$ is a formal series in $y$ which is the compositional inverse of
the equation
\be \label{eqn:Lagrange}
y = x - t u_0(x),
\ee
i.e., $A(x-tu_0(x), t) = x$,
then
\be
u(x,t) = \frac{A(x,t)- x}{t}
\ee
is a formal series solution of the inviscid Burger's equation
with initial condition $u_0(x)$:
\be
\pd_t u = \pd_x \frac{u^2}{2}, \;\; u(x, 0) = u_0(x).
\ee
\end{prop}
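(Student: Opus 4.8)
The plan is to reduce the whole statement to a single functional identity and then to exploit the invertibility of the substitution $x \mapsto x - t u_0(x)$. First I would extract from the definition of $A$ the key relation
$$
u(x - t u_0(x), t) = u_0(x).
$$
This requires no real work: since $u(z,t) = (A(z,t)-z)/t$, evaluating at $z = x - t u_0(x)$ and using the defining identity $A(x - t u_0(x), t) = x$ gives $u(x - t u_0(x), t) = (x - (x - t u_0(x)))/t = u_0(x)$. It is pure bookkeeping, but it is the identity on which everything rests.

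Next I would dispose of the initial condition and of the well-definedness of $u$. Setting $t=0$ in $y = x - t u_0(x)$ forces $y = x$, whence $A(y,0) = y$; in particular $A(x,0) = x$, so $A(x,t) - x$ is divisible by $t$ and $u = (A(x,t)-x)/t$ is a genuine formal power series. Differentiating the defining identity $A(x - t u_0(x),t) = x$ in $t$ and evaluating at $t = 0$ (using $\pd_y A(x,0) = 1$) yields $\pd_t A(x,0) = u_0(x)$, hence $u(x,0) = \pd_t A(x,0) = u_0(x)$.

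The heart of the argument is to differentiate the functional identity in $t$ with $x$ held fixed. Writing $X = x - t u_0(x)$ and applying the chain rule to $u(X,t) = u_0(x)$ gives
$$
-u_0(x)\,\pd_x u(X,t) + \pd_t u(X,t) = 0,
$$
where $\pd_x u$ denotes the derivative in the first (spatial) slot. Since $u(X,t) = u_0(x)$ along this characteristic, I may substitute $u_0(x) = u(X,t)$ to obtain $\pd_t u - u\,\pd_x u = 0$ at every point of the form $(X,t) = (x - t u_0(x),\, t)$, which is exactly the inviscid Burgers equation $\pd_t u = \pd_x(u^2/2)$ evaluated along characteristics.

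Finally I would upgrade ``holds along characteristics'' to ``holds identically''. The map $\Phi_t : x \mapsto x - t u_0(x)$ is an invertible formal transformation whose compositional inverse is precisely $A(\cdot, t)$; hence as $x$ ranges over all formal values, $X = \Phi_t(x)$ ranges over all values, and the Burgers operator applied to $u$, being a formal power series in $(x,t)$ whose composition with the invertible $\Phi_t$ vanishes, must itself vanish. I expect this last passage to be the only genuinely delicate point: it is where the formal-power-series category matters, and it is exactly the existence of the compositional inverse $A$, guaranteed by hypothesis, that licenses the return from characteristic coordinates to the ambient variables $(x,t)$.
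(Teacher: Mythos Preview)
Your proof is correct and follows the same method-of-characteristics idea that the paper uses in the discussion immediately preceding the proposition. If anything, your argument is organized more tightly in the direction the proposition is actually stated: the paper first assumes a solution $u$ exists, traces characteristics to obtain $u(x - t u_0(x),t) = u_0(x)$, and then reads off the formula via $A$, whereas you take $A$ and the formula for $u$ as given and directly verify the Burgers equation, with the invertibility of $\Phi_t(x) = x - t u_0(x)$ (i.e., the existence of $A$) supplying the passage from characteristic coordinates back to the ambient $(x,t)$.
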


One can use Lagrange inversion to solve \eqref{eqn:Lagrange}.
Let
$$A(x,t) = x + \sum_{k \geq 2} a_k(t) x^k,$$
then one has:
\be
a_k(t) = \frac{1}{k} (x-tu(x,t))^{-k}|_{x^{-1}}.
\ee
Sometimes one can directly solve  \eqref{eqn:Lagrange} by explicit expressions.
The following are some examples.

\begin{ex}
When $u_0(x) = x$,
the equation
\ben
&& y = x - tx
\een
can be solved by
$$x = \frac{y}{1-t},$$
i.e.,
$$A(x,t) = \frac{x}{1-t}.$$
It follows that
\ben
u(x,t) = \frac{A(x,t) -x}{t} = \frac{x}{1-t}.
\een
One can easily checked that it is a solution.
\end{ex}

\begin{ex}
When $u_0(x) = x^2 $,
the equation
\ben
&& y = x - tx^2
\een
can be solved by
$$x = \frac{1 - (1-4ty)^{1/2}}{2t},$$
i.e.,
$$A(x,t) = \frac{1 - (1-4tx)^{1/2}}{2t}.$$
It follows that
\ben
u(x,t) = \frac{A(x,t) -x}{t} = \frac{1 -2tx- (1-4tx)^{1/2}}{2t^2}.
\een
One can easily checked that it is a solution.
The coefficients of the series expansion of $u(x,t)$ are Catalan numbers:
\be
u(x,t) = \sum_{n=0}^\infty \frac{(2n+2)!}{(n+1)!(n+2)!} t^n x^{n+2}.
\ee
\end{ex}

In general we have:

\begin{lem} \label{lm:Burgers-Solution}
For $m =1, 2, \dots$,
the following equation
\be
\pd_t u = \pd_x \frac{u^2}{2}, \;\; u(x, 0) = c x^m
\ee
is solved by the following generating series of generalized Catalan numbers:
\be
u(x, t) =  x^m \sum_{k=1}^\infty \frac{c^k}{(m-1)k+1} \binom{mk}{k} (x^{m-1}t)^{k-1}.
\ee
\end{lem}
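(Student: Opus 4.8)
The plan is to feed the initial value $u_0(x)=cx^m$ into Proposition~\ref{prop:Lagrange}, so that the whole problem reduces to inverting a single explicit series. Concretely, I would set
\be
y = x - t u_0(x) = x - tc\,x^m = x(1 - tc\,x^{m-1}),
\ee
let $A(y,t)$ be its compositional inverse in $x$ (treating $t$ as a parameter), and then read off the solution from $u(x,t) = (A(x,t)-x)/t$. Since $y = x + O(x^m)$ is tangent to the identity when $m \geq 2$, the Lagrange inversion formula recalled in \S\ref{sec:Prelim} applies verbatim. I will dispose of the linear case $m=1$ separately: there $y=(1-tc)x$ inverts directly, as in the example with $u_0(x)=x$ above, giving $u = cx/(1-tc)$, which one checks agrees with the claimed series term by term.

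For $m\geq 2$, write $A(x,t) = x + \sum_{k\geq 2} a_k(t)\,x^k$. The Lagrange inversion formula then yields
\be
a_k(t) = \frac{1}{k}\res\Big(\frac{dx}{y^k}\Big) = \frac{1}{k}\,(x - tc\,x^m)^{-k}\big|_{x^{-1}}.
\ee
The key computation is the binomial expansion
\be
(x - tc\,x^m)^{-k} = x^{-k}(1 - tc\,x^{m-1})^{-k}
= \sum_{j\geq 0}\binom{k+j-1}{j}(tc)^j\, x^{(m-1)j-k}.
\ee
Extracting the coefficient of $x^{-1}$ forces $(m-1)j - k = -1$, i.e. $k = (m-1)j + 1$, and the residue vanishes for every other $k$. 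This selection of powers, which rigidly ties the summation index of the inverse series to the exponent $m$, is the heart of the argument.

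Substituting $k = (m-1)j+1$ I would use the identity $k + j - 1 = mj$, so that $\binom{k+j-1}{j} = \binom{mj}{j}$ and $a_k(t) = \frac{(tc)^j}{(m-1)j+1}\binom{mj}{j}$. Reassembling,
\be
u(x,t) = \frac{A(x,t)-x}{t} = \sum_{j\geq 1}\frac{c^j}{(m-1)j+1}\binom{mj}{j}\,t^{j-1}\,x^{(m-1)j+1},
\ee
and since $(m-1)j + 1 = m + (m-1)(j-1)$, this is precisely $x^m\sum_{j\geq 1}\frac{c^j}{(m-1)j+1}\binom{mj}{j}(x^{m-1}t)^{j-1}$ after renaming $j$ to $k$, which is the stated formula. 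I expect the only genuine obstacle to be bookkeeping: keeping the residue extraction, the reindexing $k=(m-1)j+1$, and the binomial simplification mutually consistent, and verifying that the degenerate case $m=1$ (where $A$ is not tangent to the identity, so Lagrange inversion is not directly available) still reproduces the same series.
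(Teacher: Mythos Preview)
Your proposal is correct and follows essentially the same route as the paper: apply Proposition~\ref{prop:Lagrange} to reduce to inverting $y = x - ct\,x^m$, then use Lagrange inversion and a binomial expansion of $(1 - ct\,x^{m-1})^{-k}$ to pick out the surviving coefficients $a_{(m-1)j+1}$. Your separate treatment of $m=1$ is a welcome bit of care, since the tangency hypothesis for Lagrange inversion fails there; the paper handles that case only implicitly via the earlier worked example with $u_0(x)=x$.
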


\begin{proof}
One can use Lagrange inversion to solve
\ben
y = x -c tx^m,
\een
i.e.,
let
$x = A(y, t) = y + \sum_{n \geq 2} a_n y^n$,

\ben
a_n & = & \frac{1}{2\pi \sqrt{-1}} \oint \frac{x}{y^{n+1}} dy
= - \frac{1}{n} \cdot \frac{1}{2\sqrt{-1}} \oint x d \frac{1}{y^n} \\
& = & \frac{1}{n} \cdot \frac{1}{2\sqrt{-1}} \oint \frac{1}{y^n} d x
= \frac{1}{n} \cdot \frac{1}{2\sqrt{-1}} \oint \frac{1}{(x- c t x^m)^n} d x \\
& = &  \frac{1}{n} \cdot \frac{1}{2\sqrt{-1}} \oint \frac{1}{x^n}
\sum_{k=0}^\infty \frac{1}{(1- c t x^{m-1})^n} d x \\
& = & \frac{1}{n} \cdot \frac{1}{2\sqrt{-1}} \oint \frac{1}{x^n}
\sum_{k=0}^\infty \binom{-n}{k} (-c t x^{m-1})^{k} d x.
\een
It is now clear that $a_n$ only when $n=k(m-1)+1$ for some $k \geq 1$,
\ben
a_{k(m-1)+1} & = & \frac{1}{k(m-1)+1} \binom{-k(m-1)-1}{k}(-c t)^{k} \\
& = & \frac{1}{k(m-1)+1} \binom{km}{m} c^k t^k.
\een
hence
\ben
A(x, t) = x + \sum_{k \geq 1} \frac{1}{k(m-1)+1} \binom{km}{m} c^k t^k x^{k(m-1)+1}.
\een
The result then follows.
\end{proof}

\subsection{Some explicit calculations}

Now we use the above method to carry out some explicit calculations.
We will use the following notation when there is no danger of  confusions:
We keep only the relevant variables, e.g.,
$u(t_0, t_2,  t_5)$ means $u(\bt)$ restricted to $t_i =0, i \neq 0, 2, 5$.

First we solve
\ben
\frac{\pd u_{(0)}}{\pd t_n} = \pd_{t_0} \frac{u_{(0)}^{n+1}}{(n+1)!}, \;\;\; u(t_0, 0, \dots) = t_0.
\een
This can be transformed to
\ben
v= \frac{u_{(0)}^n}{n!} , \;\;\;
\frac{\pd v}{\pd t_n} = \pd_{t_0} \frac{v^2}{2}, \;\;\;
v(t_0, 0, \dots) = \frac{t_0^n}{n!}.
\een
By Lemma \ref{lm:Burgers-Solution},
\ben
v(t_0, t_n)
= \frac{t_0^n}{n!} \sum_{k=1}^\infty \frac{1}{(n-1)k+1}
\binom{n k}{k} \biggl(\frac{t_0^{n-1} t_n }{n!} \biggr)^{k-1}.
\een
Then we have
\be
u_{(0)}(t_0, t_n) = t_0 \biggl[ \sum_{k=1}^\infty \frac{1}{(n-1)k+1}
\binom{n k}{k} \biggl(\frac{t_0^{n-1} t_n}{n!} \biggr)^{k-1} \biggr]^{1/n}.
\ee
It turns out that we have
\be \label{eqn:u(t0,tn)}
u_{(0)}(t_0, t_n) = t_0 \biggl[1 + \sum_{k=1}^\infty \frac{1}{(n-1)k+1}
\binom{n k}{k} \biggl(\frac{t_0^{n-1} t_n}{n!} \biggr)^{k} \biggr]
\ee
This can be proved by using the following identity:
\be
\biggl[ \sum_{k=1}^\infty \frac{z^{k-1}}{(n-1)k+1} \binom{n k}{k}  \biggr]^{1/n}
=  1+ \sum_{k=1}^\infty \frac{z^k}{(n-1)k+1} \binom{n k}{k}.
\ee
One can prove this identity by Lagrange inversion as follows.
Let $y = z  + \sum_{k \geq 2} a_n z^k$ satisfy
\be
(y/z)^{1/n} = 1 + y,
\ee
i.e.
$$z = \frac{y}{(1+y)^n}.$$
Then
\ben
a_k & = & \frac{1}{2\pi \sqrt{-1}} \oint \frac{y}{z^{k+1}} dy
= \frac{1}{k} \frac{1}{2\pi \sqrt{-1}} \oint \frac{1}{z^k} dy \\
& = & \frac{1}{k} \frac{1}{2\pi \sqrt{-1}} \oint \frac{(1+y)^{kn}}{y^k} dy
= \frac{1}{k} \binom{kn}{k-1} \\
& = & \frac{1}{(k-1)n+1} \binom{kn}{k}.
\een
As straightforward consequence of string equation is that
\be
\pd_{t_n} F_0(t_0, 0, \dots) = \frac{t_0^{n+2}}{(n+2)!}.
\ee
This matches with \eqref{eqn:u(t0,tn)}.

\begin{prop}
The sequence $u_{(0)}(t_0)$, $u_{(0)}(t_0,t_1)$, $\dots$ satisfies the following relations:
\be
u_{(0)}(x, t_1, \dots, t_n)
= u_{(0)}(x + \frac{t_n}{n!} u_{(0)}^n(x,t_1, \dots, t_n), t_1, \dots, t_{n-1}).
\ee
\end{prop}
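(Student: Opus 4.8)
The plan is to recognize the asserted identity as the implicit, method-of-characteristics solution of the single $t_n$-flow, in complete parallel with the derivation of Proposition \ref{prop:Lagrange} for the inviscid Burgers' equation.

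First I would fix the variables $t_1, \dots, t_{n-1}$ and treat $t_n$ as the time variable. Since the flows of the dispersionless hierarchy \eqref{eqn:Dispersionless-Hierarchy} commute, the restriction $u(x, t_n) := u_{(0)}(x, t_1, \dots, t_{n-1}, t_n)$ is a solution of the $n$-th flow
\[
\pd_{t_n} u = \pd_{t_0} \frac{u^{n+1}}{(n+1)!}
\]
with initial datum $u_0(x) := u_{(0)}(x, t_1, \dots, t_{n-1})$ read off at $t_n = 0$. This reduction is the one place where commutativity of the flows enters, and it is the conceptual heart of the argument.

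Next I would pass to the quasilinear form of the flow. Because $\pd_{t_0}\frac{u^{n+1}}{(n+1)!} = \frac{u^n}{n!}\pd_{t_0}u$, the equation reads $\pd_{t_n}u - \frac{u^n}{n!}\pd_{t_0}u = 0$, whose characteristics satisfy $\frac{dt_0}{dt_n} = -\frac{u^n}{n!}$ and along which $u$ stays constant. Launching a characteristic from $(t_0, t_n) = (x_0, 0)$, the value of $u$ remains $u_0(x_0)$, so the characteristic is the straight line $t_0 = x_0 - \frac{t_n}{n!}u_0(x_0)^n$. Evaluating $u$ at a point $(x, t_n)$ of this line gives $u(x,t_n) = u_0(x_0)$ with $x_0 = x + \frac{t_n}{n!}u(x,t_n)^n$, that is,
\[
u(x, t_n) = u_0\Bigl(x + \frac{t_n}{n!}\,u(x,t_n)^n\Bigr).
\]
Substituting back $u_0 = u_{(0)}(\,\cdot\,, t_1, \dots, t_{n-1})$ and $u(x,t_n) = u_{(0)}(x, t_1, \dots, t_n)$ produces exactly the stated relation.

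An alternative that recycles the work already done is to set $v = u^n/n!$; by the reduction to Burgers' equation proved above, $v$ solves $\pd_{t_n}v = \pd_{t_0}\frac{v^2}{2}$, so Proposition \ref{prop:Lagrange} yields $v(x,t_n) = v_0\bigl(x + t_n v(x,t_n)\bigr)$ with $v_0 = v(\,\cdot\,,0)$, and raising back to $u$ gives the same identity. The main obstacle I anticipate is the rigorous justification in the formal power series setting, where the characteristic picture must be replaced by an algebraic check: one verifies directly that the implicit relation defines a formal series satisfying the PDE and the initial condition, and appeals to uniqueness of such formal solutions. In the Burgers-reduction route the analogous subtlety is the extraction of the $n$-th root of $u(x,t_n)^n = u_0(x + \frac{t_n}{n!}u(x,t_n)^n)^n$; this is legitimate because both sides are $n$-th powers of formal series with leading term $x$, so their ratio is a series with constant term $1$ whose $n$-th power is $1$, forcing it to equal $1$.
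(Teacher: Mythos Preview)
Your proposal is correct. The paper follows exactly your second route: it sets $v=\frac{1}{n!}u_{(0)}^n$, invokes Proposition~\ref{prop:Lagrange} to write $v(x,t_1,\dots,t_n)=\frac{A(x,t_1,\dots,t_n)-x}{t_n}$ with $A$ the compositional inverse of $y=x-t_n v_0(x)$, and then unwinds $A=x+\tfrac{t_n}{n!}u_{(0)}^n$ together with the inverse relation to obtain equality of $u_{(0)}^n$ at the two arguments; the final passage from $u_{(0)}^n$ to $u_{(0)}$ is left implicit there, whereas you make the $n$-th root step explicit and justify it.

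Your first route---running the method of characteristics directly on $\pd_{t_n}u-\tfrac{u^n}{n!}\pd_{t_0}u=0$---is a mild but genuine simplification: it lands on the identity for $u_{(0)}$ itself rather than for $u_{(0)}^n$, so the $n$-th root extraction never arises. The cost is that you do not literally reuse Proposition~\ref{prop:Lagrange}, but the characteristic computation is no harder than the Burgers case and the formal-series justification is the same. Either argument is perfectly adequate here.
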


\begin{proof}
Set
\be
v(x,t_1, \dots, t_n) = \frac{1}{n!} u_{(0)}^n(t_0=x, t_1, \dots, t_n).
\ee
Then we have
\ben
&& \pd_{t_n} v(x, t_1, \dots, t_n) = \pd_x(\frac{v^2(x,t_1, \dots, t_n)}{2}), \\
&& v(x, t_1, \dots, t_{n-1}) = \frac{1}{n!} u_{(0)}^n(x, t_1, \dots, t_{n-1}).
\een
Then by Proposition \label{prop:Lagrange},
\be
v(x,t_1, \dots, t_n) = \frac{A(x, t_1, \dots, t_n) - x}{t_n},
\ee
where
$A(x,t_1, \dots, t_n)$ satisfies:
\be
A(x, t_1, \dots, t_n) - t_n v(A(x, t_1, \dots, t_n), t_1, \dots, t_{n-1}) = x.
\ee
The proof can be finished by noting:
\ben
&& A(x,t_1, \dots, t_n) = x + t_n \cdot v(x, t_1, \dots, t_n) \\
&& \;\;\; = x + \frac{t_n}{n!} u^n_{(0)}(x, t_1, \dots, t_n), \\
&& v(A(x, t_1, \dots, t_n), t_1, \dots, t_{n-1}) \\
& = & \frac{1}{n!} u^n_{(0)}(A(x, t_1, \dots, t_n), t_1, \dots, t_{n-1}) \\
& = & \frac{1}{n!} u^n_{(0)}(x + \frac{t_n}{n!} u^n_{(0)}(x, t_1, \dots, t_n), t_1, \dots, t_{n-1}).
\een
\end{proof}

Using the above method,
one finds $u_{(0)}(t_0,\dots, t_n)$ and $F_0(t_0, \dots, t_n)$ recursively.
For example, first,
\ben
&& u_{(0)}(t_0) = t_0, \;\;\;  F_0(t_0) = \frac{t_0^3}{6};
\een
next,
\ben
u_{(0)}(t_0,t_1) = u_{(0)}(t_0 + t_1 u_{(0)}(t_0, t_1)) = t_0 + t_1 u_{(0)}(t_0, t_1),
\een
hence
\ben
&& u_{(0)}(t_0, t_1) = \frac{t_0}{1-t_1}, \;\;\;\; F_0(t_0,t_1) = \frac{t_0^3}{6(1-t_1)}.
\een
Going to the next level,
\ben
u_{(0)}(t_0,t_1, t_2) & = & u_{(0)}(t_0 + \half t_2 u^2_{(0)}(t_0, t_1, t_2), t_1) \\
& = & \frac{t_0 + \half t_2 u^2_{(0)}(t_0, t_1, t_2)}{1-t_1}.
\een
It can be solved explicitly as follows:
\ben
&& u_{(0)}(t_0, t_1, t_2)  =  \frac{1-t_1- ( (1-t_1)^2 - 2t_0t_2 )^{1/2} }{t_2} ,
\een
and so
\be \label{eqn:F0(t0,t1,t2)}
F_0(t_0,t_1,t_2) = \frac{(1-t_1)^5}{15t_2^3} \biggl(1
- \frac{5t_0t_2}{(1-t_1)^2} +  \frac{15t_0^2t_2^2}{2(1-t_1)^4}
- \biggl(1 - \frac{2t_0t_2}{(1-t_1)^2} \biggr)^{5/2} \biggr).
\ee
The series expansion of $u_{(0)}(t_0, t_1, t_2)$ is given by:
\ben
&& u_{(0)}(t_0, t_1, t_2)  =  \frac{1-t_1}{t_2} \biggl[ 1- \biggl( 1- \frac{2t_0t_2}{(1-t_1)^2} \biggr)^{1/2} \biggr] \\
& = & \frac{t_0}{1-t_1} +\frac{1}{2} \frac{t_0^2t_2}{(1-t_1)^3}
+ \frac{1}{2} \frac{t_0^3t_2^2}{(1-t_1)^5} + \frac{5}{8} \frac{t_0^4t_2^3}{(1-t_1)^7} + \cdots,
\een

In general,
one can use Lagrange inversion recursively as follows:
\be
u_{(0)}(t_0 =x, t_1, \dots, t_n)
= \biggl(n! \frac{A(x,t_1, \dots, t_n) - x}{t_n} \biggr)^{1/n},
\ee
where
$$A(x, t_1, \dots, t_n) = x +  \sum_{k \geq 2} a_k x^k,$$
with the coefficients $a_k$ given by
\be
a_k  = \frac{1}{k} (x - \frac{t_n}{n!} u^n_{(0)}(x, t_1, \dots, t_{n-1}))^{-k}|_{x^{-1}}.
\ee
One can easily use a compute algebra system to automate the calculations using these formulas.

\subsection{Virasoro constraints}

Dijkgraaf, E. Verlinde, H .Verlinde \cite{DVV-Virasoro} and independently Fukuma, Kawai, Nakayama \cite{Fukuma-Kawai-Nakayama1}
showed that the Witten-Kontsevich tau-function can also be uniquely determined by
a sequence of linear differential equations called the Virasoro constraints:
\be \label{eqn:Virasoro}
\frac{\pd}{\pd u_{n+1}} Z_{WK} = \hat{L}_n Z_{WK}, \quad n \geq -1,
\ee
where the operators $\hat{L}_n$ are defined by:
\be
\hat{L}_n = \sum_{k=0}^\infty (2k+1) u_k \frac{\pd}{\pd u_{k+n}}+ \frac{\lambda^2}{2}
\sum_{k=0}^{n-1} \frac{\pd^2}{\pd u_k\pd u_{n-k-1}} + \frac{u_0^2}{2\lambda^2} \delta_{n,-1}
+ \frac{\delta_{n,0}}{8}.
\ee
Here we have made the following change of coordinates:
\be
t_k = (2k+1)!! u_k.
\ee
In terms of the free energy,
\bea
&& \frac{\pd F}{\pd  u_0}
= \sum_{k=1}^\infty (2k+1) u_k\frac{\pd F}{\pd u_{k-1}} + \frac{u_0^2}{2\lambda^2},  \label{eqn:Virasoro-1} \\
&& \frac{\pd F}{\pd u_1}
= \sum_{k=0}^\infty (2k+1) u_k\frac{\pd F}{\pd u_{k}} + \frac{1}{8},  \label{eqn:Virasoro0}\\
&& \frac{\pd F}{\pd u_n}
= \sum_{k=0}^\infty (2k+1) u_k \frac{\pd F}{\pd u_{k+n-1}} \label{eqn:VirasoroN} \\
&& \;\;\;\;\;\; + \frac{\lambda^2}{2} \sum_{k=0}^{n-2} \biggl(\frac{\pd^2 F}{\pd u_k\pd u_{n-k-2}}
+ \frac{\pd F}{\pd u_k} \frac{\pd F}{\pd u_{n-k-2}} \biggr), \;\;\; n \geq 2. \nonumber
\eea
In particular,
by comparing the coefficients of $\lambda^{-2}$ on both sides of these equations:
\bea
&& \frac{\pd F_0}{\pd  u_0}
= \sum_{k=1}^\infty (2k+1) u_k\frac{\pd F_0}{\pd u_{k-1}} + \frac{u_0^2}{2}, \\
&& \frac{\pd F_0}{\pd u_1}
= \sum_{k=0}^\infty (2k+1) u_k\frac{\pd F_0}{\pd u_{k}}, \\
&& \frac{\pd F_0}{\pd u_n}
= \sum_{k=0}^\infty (2k+1) u_k \frac{\pd F_0}{\pd u_{k+n-1}}
 + \frac{1}{2} \sum_{k=0}^{n-2}
\frac{\pd F_0}{\pd u_k} \frac{\pd F_0}{\pd u_{n-k-2}} , \;\;\; n \geq 2.
\eea
Now we take $u_i = 0$ for $i \geq 1$ in these equations,
and set $f_n = \frac{\pd F_0}{\pd u_n}(u_0, 0, \dots)$,
we get:
\bea
&& f_0 =  \frac{u_0^2}{2}, \\
&& f_1 = u_0 f_0, \\
&& f_n = u_0 f_{n-1}
 + \frac{1}{2} \sum_{k=0}^{n-2} f_k f_{n-k-2} , \;\;\; n \geq 2.
\eea
After taking the generating series $f(t) = f_0 + f_1 t + \cdots$,
one can get from the above recursion relations the following equation:
\be
f(t) = \frac{u_0^2}{2} + u_0t \cdot f(t) + \half t^2\cdot f(t)^2.
\ee
One can easily find the following explicit expression for $f(t)$:
\be
f(t) = \frac{(1 - u_0 t)- (1-2 u_0t)^{1/2}}{t^2}
= \sum_{ n\geq 0} \frac{(2n+1)!!}{(n+2)!2^{2n+3}} u_0^{n+2} t^n,
\ee
and so
\be \label{eqn:Pd-un-F0}
\frac{\pd F_0}{\pd u_n}(u_0, 0, \dots) = \frac{(2n+1)!!}{(n+2)!} u_0^{n+2}.
\ee
We note the above result can be reformulated as follows:
\be
z (1- \frac{2u_0}{z^2})^{1/2} = z -  \frac{u_0}{z} - \sum_{n =0}^\infty \frac{f_n}{z^{2n+3}},
\ee
or
\be \label{eqn:Der-F0}
z (1- \frac{2u_0}{z^2})^{1/2} = z - \frac{u_0}{z}
- \sum_{n =0}^\infty \frac{\pd F_0}{\pd u_n}(u_0, 0, \dots) \cdot z^{-(2n+3)}.
\ee
This will play an important role in the next section.

\section{The Mirror Geometry: Special Deformation of the Airy Curve}

\label{sec:Deformation}

From the point of view of Eynard-Orantin topological recursions,
it has been clear that
the Airy curve can serve as the mirror curve for the Gromov-Witten theory of a point \cite{Eynard, BCSW, Zhou1, Zhou2}.
In this section we will study some special deformation of the Airy  curve
constructed from Gromov-Witten invariants of a point.

\subsection{Miniversal deformation of the Airy curve}

The Airy curve is the plane algebraic curve given by the equation:
\be
y =\half x^2
\ee
Consider the restriction of the projection $\pi: \bC^2 \to \bC$, $(x, y) \mapsto y$ to this curve,
$(x,y) = (0, 0)$ is an isolated singularity of type $A_1$.
Its miniversal deformation is given by
\be
y = \half x^2 + u_0.
\ee
To unravel the information hidden in this simple formula,
let $f$ be the Laurent series such that $f^2= 2 y = x^2 + 2 u_0$,
i.e.,
\ben
f & = & x (1+ \frac{2 u_0}{x^2})^{1/2} = \sum_{j=0}^\infty  \binom{1/2}{j} (2u_0)^j x^{1-2j} \\
& = & x +   \frac{u_0}{x}-\frac{1}{2} \frac{u_0^2}{x^3}
+\frac{1}{2} \frac{u_0^3}{x^5} - \frac{5}{8} \frac{u_0^4}{x^7}
+ \frac{7}{8} \frac{u_0^5}{x^9}- \frac{21}{16} \frac{u_0^6}{x^{11}} + \cdots,
\een
and let
\ben
x  & = & f (1- \frac{2u_0}{f^2})^{1/2}
= \sum_{j=0}^\infty  \binom{1/2}{j} (-2u_0)^j x^{1-2j} \\
& = & f - \frac{u_0}{f}-\frac{1}{2} \frac{u_0^2}{f^3}
- \frac{1}{2} \frac{u_0^3}{f^5} - \frac{5}{8} \frac{u_0^4}{f^7}
- \frac{7}{8} \frac{u_0^5}{f^9}- \frac{21}{16} \frac{u_0^6}{f^{11}}  - \cdots
\een
be its inverse series.
One can now note the meaning of the coefficients of $f^{2n+3}$ in $x$:

\begin{prop}
The following equalities hold:
\be
x|_{f^{2n+3}} = - \frac{(2n+1)!!}{(n+2)!} u_0^{n+2} =  -  \frac{\pd F_0}{\pd u_n}(u_0, 0, \dots).
\ee
\end{prop}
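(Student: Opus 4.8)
The plan is to read the desired coefficient straight off the binomial expansion already displayed for $x$, and then to identify it with the closed form \eqref{eqn:Pd-un-F0} for $\pd F_0/\pd u_n$. Interpreting $x|_{f^{2n+3}}$ as the coefficient of $f^{-(2n+3)}$ in the Laurent expansion of $x$ (the convention forced by \eqref{eqn:Der-F0}, in which the descending powers $f^{-(2n+3)}$ carry precisely the quantities $-\pd F_0/\pd u_n$), I would begin from
$x = f\bigl(1 - 2u_0/f^2\bigr)^{1/2} = \sum_{j \geq 0} \binom{1/2}{j}(-2u_0)^j f^{1-2j}$.
The power $f^{-(2n+3)}$ arises from the unique index with $1 - 2j = -(2n+3)$, namely $j = n+2$, so the coefficient in question is exactly $\binom{1/2}{n+2}(-2u_0)^{n+2}$. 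The whole proposition thus reduces to the evaluation of a single binomial coefficient.

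The second step is the elementary simplification of $\binom{1/2}{n+2} = \frac{1}{(n+2)!}\prod_{i=0}^{n+1}\bigl(\half - i\bigr)$. Separating the $i=0$ factor $\half$, each remaining factor is $\half - i = -\frac{2i-1}{2}$ for $1 \le i \le n+1$, so the product contributes a sign $(-1)^{n+1}$, a power $2^{-(n+1)}$, and the odd-number product $\prod_{i=1}^{n+1}(2i-1) = (2n+1)!!$. This gives $\binom{1/2}{n+2} = \frac{(-1)^{n+1}(2n+1)!!}{(n+2)!\,2^{n+2}}$. Multiplying by $(-2u_0)^{n+2} = (-1)^{n+2}2^{n+2}u_0^{n+2}$, all powers of $2$ cancel and the signs collapse via $(-1)^{2n+3} = -1$, yielding $-\frac{(2n+1)!!}{(n+2)!}u_0^{n+2}$. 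This is the first claimed equality.

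The second equality is then immediate, since $\frac{(2n+1)!!}{(n+2)!}u_0^{n+2}$ is precisely $\frac{\pd F_0}{\pd u_n}(u_0,0,\dots)$ by \eqref{eqn:Pd-un-F0}, which was obtained from the genus-zero Virasoro constraints. Alternatively, the entire computation can be bypassed by quoting the reformulation \eqref{eqn:Der-F0} directly: with $z = f$, its right-hand side already exhibits the coefficient of $f^{-(2n+3)}$ as $-\pd F_0/\pd u_n$, so the statement is a transcription. I expect no genuine obstacle here; the only points demanding care are the sign bookkeeping and the double-factorial identity $\prod_{i=1}^{n+1}(2i-1) = (2n+1)!!$. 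As a consistency check, the formula $-\frac{(2n+1)!!}{(n+2)!}u_0^{n+2}$ reproduces the displayed coefficients $-\half u_0^2/f^3$, $-\half u_0^3/f^5$, $-\frac{5}{8}u_0^4/f^7$, $-\frac{7}{8}u_0^5/f^9$ for $n=0,1,2,3$, confirming the normalization.
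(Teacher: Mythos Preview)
Your argument is correct and matches the paper's approach: the proposition is stated without a separate proof, being an immediate consequence of the displayed binomial expansion of $x$ together with the already-established identities \eqref{eqn:Pd-un-F0} and \eqref{eqn:Der-F0}. Your explicit evaluation of $\binom{1/2}{n+2}$ simply fills in the routine detail that the paper leaves to the reader.
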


This is not just a nice coincidence
but  a special case of the mirror symmetry between the intersection theory of $\psi$-classes on $\Mbar_{g,n}$ and
the quantum deformation theory of the Airy curve to be presented below.

\subsection{Special deformation}

In last subsection we have seen that
the miniversal deformation of the Airy curve
$$y = \half x^2 + u_0$$
is given by the Puiseux series:
\be
x = f - \frac{u_0}{f} - \sum_{n \geq 0} \frac{\pd F_0}{\pd u_n}(u_0, 0, \dots) \cdot f^{-2n-3}.
\ee
This suggests  to include the variables $u_1, u_2, \dots$ in the deformation,
as in the following:

\begin{thm} \label{thm:Existence}
Consider the following series:
\be \label{eqn:X-in-F}
x = f - \sum_{n \geq 0} (2n+1) u_n f^{2n-1} - \sum_{n \geq 0} \frac{\pd F_0}{\pd u_n}(\bu) \cdot f^{-2n-3}.
\ee
Then one has
\be \label{eqn:x^2}
\begin{split}
x^2 = &
2 y \biggl(1- \sum_{n \geq 1} (2 n+1) u_n (2y)^{n-1}\biggr)^2 \\
- & 2 u_0 \biggl(1-  \sum_{n \geq 1} (2n+1) u_n (2y)^{n-1} \biggr) \\
+ & 2 \sum_{n \geq 0} \sum_{k \geq n+2} (2k+1) u_k \cdot \frac{\pd F_0}{\pd u_n} \cdot (2y)^{k-n-2} .
\end{split}
\ee
In particular,
\be
(x^2)_- = 0.
\ee
Here for a formal series $\sum_{n \in \bZ} a_n f^n$,
\be
(\sum_{n \in \bZ} a_n f^n)_+ = \sum_{n \geq 0} a_n f^n, \;\;\;\;
(\sum_{n \in \bZ} a_n f^n)_ - = \sum_{n < 0} a_n f^n.
\ee
\end{thm}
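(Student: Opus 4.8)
The plan is to establish the explicit formula \eqref{eqn:x^2} for $x^2$ and then read off $(x^2)_- = 0$ as a consequence. Indeed, once \eqref{eqn:x^2} is proved the vanishing is immediate: substituting $2y = f^2$, the first line becomes $f^2\bigl(1-\sum_{n\geq 1}(2n+1)u_n f^{2n-2}\bigr)^2$, a power series in $f^2$; the second line is likewise a series in $f^2$; and in the double sum the constraint $k\geq n+2$ forces the exponent $2(k-n-2)$ of $f$ to be nonnegative. Hence the right-hand side of \eqref{eqn:x^2} contains only nonnegative powers of $f$, so its negative part is $0$. Everything therefore reduces to verifying the algebraic identity \eqref{eqn:x^2} by collecting powers of $f$.

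To carry this out I would split the series \eqref{eqn:X-in-F} as $x = g - C$, where $g = f - P$ with $P = \sum_{n\geq 1}(2n+1)u_n f^{2n-1}$ collects the nonnegative odd powers, and $C = \frac{u_0}{f} + B$ with $B = \sum_{n\geq 0}\frac{\pd F_0}{\pd u_n}(\bu)\,f^{-2n-3}$ is the tail (the $n=0$ term $u_0 f^{-1}$ of the middle sum in \eqref{eqn:X-in-F} has been moved into $C$). Expanding $x^2 = g^2 - 2gC + C^2$, the term $g^2 = (f-P)^2$ matches the first line of \eqref{eqn:x^2}, and the cross term $-2g\cdot\tfrac{u_0}{f} = -2u_0 + \tfrac{2u_0}{f}P$ matches the second line, since $P/f = \sum_{n\geq 1}(2n+1)u_n f^{2n-2}$. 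Both identifications are routine. The remaining contribution is $R := -2gB + \tfrac{u_0^2}{f^2} + \tfrac{2u_0}{f}B + B^2$, and the assertion to be proved is that $R$ equals the third (double-sum) line of \eqref{eqn:x^2}.

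The heart of the argument is showing that $R$ minus the target double sum is zero. Writing $-2gB = -2fB + 2PB$ and computing $2PB = 2\sum_{k\geq 1,\,n\geq 0}(2k+1)u_k\frac{\pd F_0}{\pd u_n}f^{2k-2n-4}$, I observe that precisely the $k\geq n+2$ part of $2PB$ reproduces the target double sum, so what must cancel is a sum of strictly negative powers of $f$ coming from $-2fB$, $u_0^2 f^{-2}$, $\tfrac{2u_0}{f}B$, $B^2$, and the leftover $1\leq k\leq n+1$ part of $2PB$. Extracting the coefficient of $f^{-2N}$ for each $N\geq 1$ gives
\[
-2\frac{\pd F_0}{\pd u_{N-1}} + u_0^2\delta_{N,1} + 2u_0\frac{\pd F_0}{\pd u_{N-2}} + \sum_{j=0}^{N-3}\frac{\pd F_0}{\pd u_j}\frac{\pd F_0}{\pd u_{N-3-j}} + 2\sum_{k\geq 1}(2k+1)u_k\frac{\pd F_0}{\pd u_{k+N-2}},
\]
with any term carrying an out-of-range index discarded. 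This expression vanishes identically by the genus-zero Virasoro constraints, namely the $\lambda^{-2}$-coefficients of \eqref{eqn:Virasoro-1}, \eqref{eqn:Virasoro0}, \eqref{eqn:VirasoroN}, applied with $n = N-1$. I expect the only real obstacle to be the organization of this bookkeeping: the Virasoro relations split into the three cases $n=0$, $n=1$, $n\geq 2$, and one must check that the anomaly term $u_0^2/2$ settles the $N=1$ coefficient, that peeling the $k=0$ terms off $\sum_{k\geq 0}$ to produce $\sum_{k\geq 1}$ exactly absorbs the explicit $u_0\,\pd F_0/\pd u_{N-2}$ contributions, and that the quadratic tail $\tfrac12\sum_j \frac{\pd F_0}{\pd u_j}\frac{\pd F_0}{\pd u_{N-3-j}}$ cancels the $B^2$ term for $N\geq 3$. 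Keeping the index shifts aligned across these three cases is the delicate step; the rest is mechanical.
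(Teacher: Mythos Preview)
Your proposal is correct and is essentially the same argument as the paper's: the paper also writes $x = \bigl(f - \sum_{n\geq 1}(2n+1)u_n f^{2n-1}\bigr) - \bigl(\tfrac{u_0}{f} + \sum_{n\geq 0}\tfrac{\pd F_0}{\pd u_n}f^{-2n-3}\bigr)$, squares, and kills the negative-power part coefficient by coefficient using the genus-zero Virasoro relations. The only cosmetic difference is the order of presentation---the paper first proves $(x^2)_-=0$ via Virasoro and then records $x^2=(x^2)_+$ in the stated closed form, whereas you aim for the closed form directly and discover the Virasoro identities as the vanishing of the residual negative powers; the computations coincide line for line.
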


\begin{proof}
This is actually equivalent to the Virasoro constraints for $F_0$.
Indeed,
\ben
x^2 & = & \biggl( f - \sum_{n \geq 1} (2n+1) u_n f^{2n-1}
- \frac{u_0}{f} - \sum_{n \geq 0} \frac{\pd F_0}{\pd u_n} \cdot f^{-2n-3} \biggr)^2  \\
& = & \biggl( f- \sum_{n \geq 1} (2n+1) u_n f^{2n-1}\biggr)^2  \\
& - & 2 \biggl( f- \sum_{n \geq 1} (2n+1) u_n f^{2n-1}\biggr) \cdot
\biggl( \frac{u_0}{f} + \sum_{n \geq 0} \frac{\pd F_0}{\pd u_n} \cdot f^{-2n-3}\biggr) \\
& + & \biggl( \frac{u_0}{f} + \sum_{n \geq 0} \frac{\pd F_0}{\pd u_n} \cdot f^{-2n-3}\biggr)^2 \\
& = & f^2 \biggl(1- \sum_{n \geq 1} (2n+1) u_n f^{2n-2}\biggr)^2 - 2 u_0 \biggl( 1 - \sum_{n \geq 1} (2n+1) u_n f^{2n-2}\biggr)  \\
& - & 2 \sum_{n \geq 0} \frac{\pd F_0}{\pd u_n} \cdot f^{-2n-2} + 2 \sum_{n \geq 1} (2n+1) u_n f^{2n-1} \cdot
 \sum_{n \geq 0} \frac{\pd F_0}{\pd u_n} \cdot f^{-2n-3}  \\
& + & \biggl( \frac{u_0}{f} + \sum_{n \geq 0} \frac{\pd F_0}{\pd u_n} \cdot f^{-2n-3}\biggr)^2.
\een
It follows that
\ben
(x^2)_- & = & 2 \biggl( \frac{t_0^2}{2}   \cdot f^{-2} - \sum_{n \geq 0} \frac{\pd F_0}{\pd u_n} \cdot f^{-2n-2} \\
& + & \sum_{n \geq 0} \sum_k (2k+1) u_k \cdot  \frac{\pd F_0}{\pd u_{k+n-1}} \cdot f^{-2n-2} \\
& + & \half \sum_{n \geq 2} \sum_{j+k = n-2} \frac{\pd F_0}{\pd u_j}
\cdot \frac{\pd F_0}{\pd u_k} \cdot f^{-2n-2} \biggr).
\een
It vanishes by Virasoro constraints for $F_0$.
It follows that
\ben
x^2 & = & (x^2)_+
=  f^2 \biggl(1- \sum_{n \geq 1} (2n+1) u_n f^{2n-2}\biggr)^2 \\
& - & 2u_0 \biggl( 1 - \sum_{n \geq 1} (2n+1) u_n f^{2n-2}\biggr)  \\
& + &  2 \sum_{n \geq 0} \sum_{k \geq n+2} (2k+1) u_k \cdot \frac{\pd F_0}{\pd u_n} \cdot f^{2(k-n-2)} .
\een
The proof is completed by recalling $2y=f^2$.
\end{proof}

Recall we have assigned that $\deg t_n = 2-2n$,
and because $t_n = (2n-1)!! \cdot u_n$,
we have $\deg u_n = 2 -2n$.
Also because $\deg F_0 = 6$,
so we have
$$
\deg \frac{\pd F_0}{\pd u_n} = 2n+4.
$$
Therefore,
if we assign
\be
\deg f = 1,
\ee
then the right-hand side of \eqref{eqn:X-in-F} is weighted homogeneous of degree $1$,
it follows that we should take
\be
\deg x = 1.
\ee

\subsection{Uniqueness of special deformation of the Airy curve}

Let us first prove a simple combinatorial result.

\begin{thm} \label{thm:Uniqueness}
There exists a unique series
\be
x = f - \sum_{n \geq 0} v_n f^{2n-1} - \sum_{n \geq 0} w_n f^{-2n-3}
\ee
such that
each $w_n \in \bC[[v_0, v_1, \dots]]$ and
\be \label{eqn:x2-=0}
(x^2)_- = 0.
\ee
\end{thm}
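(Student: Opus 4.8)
The plan is to turn the single requirement $(x^2)_-=0$ into an infinite system of scalar equations in the unknowns $w_0,w_1,\dots$ and to solve that system by induction on total degree in the $v$'s. First I would record that every power of $f$ occurring in $x$ is odd, so $x^2$ contains only even powers of $f$ and $(x^2)_-$ is supported on $f^{-2},f^{-4},\dots$. Squaring $x=f-\sum_{n\ge 0}v_nf^{2n-1}-\sum_{n\ge 0}w_nf^{-2n-3}$ and collecting the coefficient of $f^{-2m-2}$ for each $m\ge 0$, the condition $(x^2)_-=0$ becomes, after a routine bookkeeping of which pairs of monomials multiply to $f^{-2m-2}$, a relation of the shape
\[
2(1-v_1)\,w_m \;=\; P_m(v)\;+\;2v_0\,w_{m-1}\;+\;2\sum_{k\ge 1}v_{k+1}\,w_{m+k}\;+\!\!\sum_{j+j'=m-2}\!\! w_j w_{j'},
\]
where $P_m(v)=v_0^2$ for $m=0$ and $P_m(v)=0$ otherwise, and all terms with negative subscripts are discarded. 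The only negative power of $f$ in the part $f-\sum_n v_nf^{2n-1}$ is $v_0f^{-1}$, and it is precisely this boundary term that produces the inhomogeneous source $v_0^2$ in the $m=0$ equation.

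The decisive structural observation is that the coefficient of $w_m$ on the left is $2(1-v_1)$, a unit of $\bC[[v_0,v_1,\dots]]$ with constant term $2$, whereas on the right every occurrence of an unknown is multiplied by an element of the maximal ideal $\mathfrak{m}=(v_0,v_1,\dots)$: each linear coupling carries a single factor $v_i$, and each quadratic term is a product of two $w$'s, which themselves lie in $\mathfrak{m}$ (indeed, setting all $v_n=0$ forces $w_n=0$). Consequently the system is not triangular in the index $m$ --- note that $w_m$ is coupled to the higher-index terms $w_{m+k}$ --- but it is triangular with respect to total degree in the variables $v_i$. I would therefore solve it degree by degree: writing $w_m=\sum_{d\ge 0}(w_m)_d$ for the decomposition into homogeneous parts, the degree-$d$ component of the above relation expresses $2(w_m)_d$ in terms of components $(w_j)_{d'}$ with $d'<d$ only. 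Starting from $(w_m)_0=(w_m)_1=0$ and $(w_0)_2=\tfrac12 v_0^2$, this determines every $(w_m)_d$ uniquely and constructs it at the same time, yielding simultaneously existence and uniqueness.

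The step that requires care --- and the main obstacle to a naive recursion --- is exactly this coupling of $w_m$ to the infinitely many higher terms $w_{m+k}$, which rules out solving for the $w_m$ one index at a time. I would resolve it through the total-degree induction above, after checking that each infinite sum $\sum_{k\ge1}v_{k+1}w_{m+k}$ defines a legitimate element of $\bC[[v_0,v_1,\dots]]$: every summand is divisible by a distinct variable $v_{k+1}$, so any fixed monomial is hit by only finitely many values of $k$, and the sum converges in the $\mathfrak{m}$-adic topology. Equivalently, one may phrase the whole argument as a contraction: the map sending a tuple $(w_m)$ to the right-hand sides above, divided by $2(1-v_1)$, raises $\mathfrak{m}$-adic order by one, so by completeness of $\bC[[v_0,v_1,\dots]]$ it has a unique fixed point, which furnishes the desired series $x$.
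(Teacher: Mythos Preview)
Your argument is correct and follows essentially the same route as the paper. Both proofs rewrite $(x^2)_-=0$ as the same infinite system in the $w_m$'s (your display matches the paper's equations \eqref{eqn:Rec-w0}--\eqref{eqn:Rec-w3} after moving $v_1w_m$ to the left), observe that the system is not triangular in $m$ because of the couplings to $w_{m+k}$, and then solve it by induction on the total degree in the $v_i$'s, starting from $(w_m)_0=(w_m)_1=0$ and $(w_0)_2=\tfrac12 v_0^2$. Your explicit remark on the $\mathfrak{m}$-adic convergence of the sums $\sum_{k\ge1}v_{k+1}w_{m+k}$ and the alternative contraction-mapping phrasing are pleasant additions not spelled out in the paper; conversely, the paper records the slightly sharper vanishing $w_n^{(j)}=0$ for $j\le n+1$, which you do not need.
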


\begin{proof}
We begin by rewriting \eqref{eqn:x2-=0} as a sequence of equations:
\bea
&& w_0 =  \frac{1}{2} v_0^2 + v_1 w_0 + v_2 w_1 + v_3 w_2 + \cdots, \label{eqn:Rec-w0} \\
&& w_1 = \;\;\;\; \;\;\;\;\;\; v_0 w_0 + v_1 w_1 + v_2w_2 + \cdots, \label{eqn:Rec-w1} \\
&& w_2 = \;\;\;\; \;\;\;\;\;\;\;\;\;\;\;\;\;\;\;\;\;\; v_0 w_1 + v_1 w_2 + \cdots + \half w_0^2, \label{eqn:Rec-w2} \\
&& w_3 = \;\;\;\; \;\;\;\;\;\;\;\;\;\;\;\;\;\;\;\;\;\; \;\;\;\;\;\;\;\;\;\;\; v_0 w_2  + \cdots + w_0w_1, \label{eqn:Rec-w3} \\
&& \cdots\cdots\cdots\cdots \cdots
\eea
Write
$$
w_n = w_n^{(0)} + w_n^{(1)} + \cdots,
$$
where each $w^{(k)}_n$ consists of monomials in $v_0, v_1, \dots$ of ordinary degree $k$.
Using such decompositions,
one can deduce by induction from the above system of equations:
\be
w_n^{(j)} = 0, \;\;\; n \geq 0, \;\; j =0, \dots, n+1,
\ee
and furthermore,
\ben
&& w_0^{(2)} = \half v_0^2, \\
&& w_0^{(n)} = \sum_{j \geq 1} v_j w_{j-1}^{(n-1)}, \;\;\; n \geq 3, \\
&& w_1^{(n)} = \sum_{j=0}^\infty v_j w_j^{(n-1)}, \;\;\; n \geq 3, \\
&& w_m^{(n)} = \sum_{j=0}^\infty v_j w_{j+m-1}^{n-1}
+ \half \sum_{j=0}^{m-2} \sum_{k=j}^{n-m+j} w_j^{(k)} w_{m-2-j}^{(n-k)}, \; m \geq 1, \; n \geq m+2.
\een
It follows that one can recursively determine all $w_m^{(n)}$ from the initial value $w_0^{(2)} = \half v_0^2$.
\end{proof}

By combining Theorem \ref{thm:Existence} with Theorem \ref{thm:Uniqueness},
we then get:

\begin{thm}
The equation
\be
(x^2)_- = 0
\ee
for a series
\be
x = f - \sum_{n \geq 0} (2n+1) u_n f^{2n-1} - \sum_{n \geq 0} w_n f^{-2n-3},
\ee
where each $w_n \in \bC[[u_0, u_1, \dots]]$ has a unique solution given by:
\be
x = f - \sum_{n \geq 0} (2n+1) u_n f^{2n-1} - \sum_{n \geq 0} \frac{\pd F_0}{\pd u_n}(\bu) \cdot f^{-2n-3},
\ee
as a series in $\bu = (u_0, u_1, \dots)$,
where $F_0(\bu)$ is the free energy of the 2D topological gravity in genus zero.
\end{thm}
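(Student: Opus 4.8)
The plan is to obtain this statement as an immediate consequence of Theorem~\ref{thm:Existence} and Theorem~\ref{thm:Uniqueness}, the only real work being a change of variables that matches their hypotheses. First I would record that the assignment $v_n = (2n+1)u_n$ is an invertible diagonal linear change of coordinates, since $2n+1 \neq 0$ for every $n \geq 0$. Consequently the formal power series rings $\bC[[v_0, v_1, \dots]]$ and $\bC[[u_0, u_1, \dots]]$ coincide, and the decomposition by ordinary (total polynomial) degree used in the proof of Theorem~\ref{thm:Uniqueness} is unaffected, because each $v_n$ is a nonzero scalar multiple of $u_n$ and hence a degree-preserving substitution.

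With this substitution the positive part $-\sum_{n \geq 0} v_n f^{2n-1}$ of the series in Theorem~\ref{thm:Uniqueness} becomes exactly $-\sum_{n \geq 0}(2n+1)u_n f^{2n-1}$, so the admissible series there are precisely those of the form
\be
x = f - \sum_{n \geq 0} (2n+1) u_n f^{2n-1} - \sum_{n \geq 0} w_n f^{-2n-3},
\ee
with $w_n \in \bC[[\bu]]$. Theorem~\ref{thm:Uniqueness} then guarantees that the requirement $(x^2)_- = 0$ singles out exactly one such series, its coefficients $w_n$ being determined recursively from the single initial datum $w_0^{(2)} = \frac{1}{2} u_0^2$.

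It remains only to identify this unique solution. Here I would invoke Theorem~\ref{thm:Existence}, which exhibits the explicit series with $w_n = \pd F_0/\pd u_n(\bu)$ and verifies that it satisfies $(x^2)_- = 0$. Since $F_0(\bu)$ is a formal power series in $\bu$, its first partial derivatives again lie in $\bC[[\bu]]$, so this explicit series meets the hypotheses of Theorem~\ref{thm:Uniqueness}. By the uniqueness just established it must coincide with the solution produced above, which forces $w_n = \pd F_0/\pd u_n(\bu)$ for all $n$, as claimed. The only point requiring any care — and the nearest thing to an obstacle — is the bookkeeping of the reparametrization: one must check that the two positive parts genuinely agree under $v_n = (2n+1)u_n$ and that both theorems are being applied over the same ring. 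Beyond this there is no analytic or combinatorial difficulty, the substance of the argument having already been carried out in Theorems~\ref{thm:Existence} and~\ref{thm:Uniqueness}.
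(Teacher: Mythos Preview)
Your proposal is correct and follows exactly the paper's approach: the paper simply states that the theorem is obtained by combining Theorem~\ref{thm:Existence} with Theorem~\ref{thm:Uniqueness}, and you have faithfully spelled out the routine change of variables $v_n = (2n+1)u_n$ needed to match their hypotheses. There is nothing to add.
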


\subsection{Deformation of the superpotential function}

From \eqref{eqn:x^2} one can also derive formula for deformation of the superpotential function.
By \eqref{eqn:x^2},
one can formally write:
\be
x^2 = a_0 + a_1 f^2 + a_2 f^4 + \cdots,
\ee
where
\bea
&& a_0 = - 2 u_0(1-3u_1) + 2 \sum_{n \geq 0} (2n+5) u_{n+2} \frac{\pd F_0}{\pd u_n}, \\
&& a_1 = (1-3u_1)^2 + 2u_0 \cdot 5u_2 + 2\sum_{n \geq 0} (2n+7)u_{n+3} \cdot \frac{\pd F_0}{\pd u_n},
\eea
and for $m \geq 2$,
\be
\begin{split}
a_m = & -2 (1-3u_1) \cdot (2m+1)u_m + 2u_0 \cdot (2m+3) u_{m+1} \\
+ & \sum_{\substack{m_1, m_2 \geq 1 \\ m_1+m_2 = m+1}} (2m_1+1)u_{m_1} \cdot (2m_2+1)u_{m_2} \\
+ & 2 \sum_{n \geq 0} (2n+2m+1)u_{n+2+m} \cdot \frac{\pd F_0}{\pd u_n}.
\end{split}
\ee
Note each $a_m$ is weighted homogenous of degree
\be
\deg a_m = 2 - 2m.
\ee
For later use,
we note the following specialization of these coefficients:
\bea
&& a_0(u_0, u_1, 0, \dots) = - 2u_0 (1-3u_1), \\
&& a_1(u_0, u_1, 0, \dots) = (1-3u_1)^2, \\
&& a_n(u_0, u_1, 0, \dots) = 0,  \;\; \; n \geq 2.
\eea
We also note that by \eqref{eqn:Pd-un-F0},
for $m \geq 0$,
\be \label{eqn:Pd-un-am}
\frac{\pd a_m}{\pd u_n}(u_0) = 2(2n+1) \cdot \frac{(2n-2m-3)!!}{(n-m)!} u_0^{n-m},
\ee
where we have used the following convention:
\be
(-1)!! = 1, \;\; (-3)!! = -1, \;\; (-2n-1)!! = 0, \;\; n \geq 2.
\ee

Now we have
\be
\frac{x^2-a_0}{a_1} = f^2 + \frac{a_2}{a_1} f^4 + \frac{a_3}{a_1} f^6 + \cdots,
\ee
so we can apply the Lagrangian inversion to get:
\bea
f^2 & = & \frac{x^2-a_0}{a_1} + b_2 \cdot \biggl( \frac{x^2-a_0}{a_1} \biggr)^2 + \cdots \\
& = & c_{0} + c_1 x^2 + c_2 x^4 + c_3 x^6 + \cdots,
\eea
where $\{\frac{a_2}{a_1}, \frac{a_3}{a_1}, \dots\}$ and
$\{b_2, b_3, \dots\}$ are Lagrangian dual to each other,
and
\ben
&& c_0 = - \frac{a_0}{a_1} + \sum_{n=2}^\infty (-1)^nb_n \frac{a_0^n}{a_1^n}, \\
&& c_1 = \frac{1}{a_1^2} + \sum_{n=2}^\infty (-1)^{n-1} n b_n \frac{a_0^{n-1}}{a_1^n}, \\
&& c_m = \sum_{n = m}^\infty (-1)^m \binom{n}{m} b_n \frac{a_0^{n-m}}{a_1^n}, \;\; m \geq 2.
\een
We have already seen that
\bea
&& c_0(u_0, u_1, 0, \dots) = \frac{2u_0}{1-3u_1}, \\
&& c_1(u_0, u_1, 0, \dots) = \frac{1}{(1-3u_1)^2}, \\
&& c_m(u_0, u_1, 0, \dots) = 0, \;\; m \geq 2.
\eea

The following are some examples:
When $u_j = 0$ for $j \geq 1$,
the equation of the Airy curve is deformed to:
\be
x^2 = 2y - 2u_0,
\ee
and the superpotential function is deformed to
\be \label{eqn:y=xu0}
y = \frac{1}{2}x^2 + u_0.
\ee
When $t_j = 0$ for $j \geq 2$,
the curve is deformed to
\be
x^2 = 2 (1- 3 u_1)^2 y - 2 u_0(1 - 3 u_1),
\ee
and the superpotential is deformed to
\be \label{eqn:y=xu0u1}
y = \frac{1}{2(1-3u_1)^2} x^2 + \frac{u_0}{1-3u_1}.
\ee
In other words,
by making the following transformation:
\begin{align*}
x & \mapsto \frac{x}{1-3u_1}, & u_0 & \mapsto \frac{u_0}{1-3u_1},
\end{align*}
one can obtain \eqref{eqn:y=xu0u1} from \eqref{eqn:y=xu0}.
When $t_j = 0$ for $j \geq 3$,
the curve is deformed to:
\ben
x^2 & = & -2u_0(1-3u_1) +  10 u_2 \frac{\pd F_0}{\pd u_0}(u_0, u_1, u_2) \\
& + & ( (1-3u_1)^2+10u_0 u_2) (2y) \\
& + & (30u_1u_2-10u_2 (1-3u_1)) (2y)^2 +25u_2^2 (2y)^3,
\een
where $F_0$ is given by \eqref{eqn:F0(t0,t1,t2)}.
By making the transformation
\begin{align*}
x & \mapsto \frac{x}{1-3u_1}, & u_0 & \mapsto \frac{u_0}{1-3u_1}, & u_2 & \mapsto \frac{u_2}{1-3u_1}
\end{align*}
one can reduce to the case of $u_1 = 0$:
\be \label{eqn:Deformation-u0u2}
\begin{split}
x^2  = & \frac{2}{135u_2}((1-30u_0u_2)^{3/2} - 1 - 90 u_0u_2) \\
+ & (1+10u_0u_2) \cdot f \\
- & 10 u_2 \cdot f^2 + 25 u_2^2 \cdot f^3.
\end{split}
\ee
One can solve this equation explicitly by writing $f$ as a function of $x^2$.
To make the notations simpler,
write $C=5u_2f$, $V = 10 u_0u_2$,
$U=(1-3V)^{1/2}$ and $X = 5u_2x^2$.
Then one can rewrite the above equation as follows:
\be
C^3 - 2C^2 - \frac{1}{3}(U+2)(U-2)C + \frac{2}{27}(U+2)^2(U-1) - X = 0.
\ee
An  explicit solution is given by:
\ben
C & = & \frac{1}{6} [108X-8U^3+12(81X^2-12XU^3)^{1/2}]^{1/3} \\
& + & \frac{2U^2}{3}
[108 X-8 U^3+12(81 X^2-12X U^3)^{1/2}]^{-1/3} + \frac{2}{3}.
\een
However, this is not very useful if we want to expand it as a series in $X$.
Write
$C = \sum_{n=0}^\infty C_n X^n=C_0+Y$.
Then one has
\be
C_0 = \frac{2}{3} (1- U),
\ee
and
\be
Y^3-2UY^2+U^2Y =X,
\ee
After dividing both sides by $U^3$:
\be
\biggl(\frac{Y}{U} \biggr)^3 - 2\biggl(\frac{Y}{U}\biggr)^2 + \frac{Y}{U} = \frac{X}{U^3}.
\ee
This can be solved explicitly by the following series of generalized Catalan numbers:
\be
\frac{Y}{U} = \sum_{n=0}^\infty \frac{1}{n+1} \binom{3n+1}{n} \biggl(\frac{X}{U^3} \biggr)^{n+1}.
\ee
Therefore,
\ben
C & = & \frac{2}{3} (1- U) + \sum_{n =0}^\infty \frac{1}{n+1} \binom{3n+1}{n} \frac{X^{n+1}}{U^{3n+2}} \\
& = & \frac{2}{3} (1- U) + \frac{X}{U^2} + \frac{2X^2}{U^5} + \frac{7X^3}{U^8}
+ \frac{30X^4}{U^{11}} + \frac{143X^5}{U^{14}} + \cdots.
\een
It follows that
\be \label{eqn:f-u0-u2}
\begin{split}
f = & \frac{2}{15} \frac{1- (1-30u_0u_2)^{1/2}}{u_2} \\
+ & \sum_{n =0}^\infty \frac{1}{n+1} \binom{3n+1}{n}
\frac{(5u_2)^nx^{2n+2}}{(1-30u_0u_2)^{(3n+2)/2}}.
\end{split}
\ee
More explicitly,
\ben
2y & = & (2u_0+15u_2u_0^2+225u_2^2u_0^3 + \frac{16875}{4} u_2^3u_0^4 + \frac{354375}{4} u_2^4u_0^5 + \cdots) \\
& + & (1+30u_2u_0+900u_2^2u_0^2+27000u_2^3u_0^3+810000u_2^4u_0^4 +\cdots) x^2 \\
& + & (10u_2+750u_2^2u_0+39375u_2^3u_0^2+1771875u_2^4u_0^3+\cdots) x^4 \\
& + & (175u_2^2+21000u_2^3u_0+1575000u_2^4u_0^2 + \cdots) x^6 \\
& + & \cdots.
\een
By letting $u_2 = 0$,
one recovers the equation $y = \half x^2 + u_0$.

This example reveals some interesting features of the deformation \eqref{eqn:x^2}.
First,
by keeping $u_n = 0$ for $n >2$ and letting  $u_2 \neq 0$,
the deformation changes  the Airy curve from a rational curve to a hyperelliptic curve of degree $6$,
hence the genus is changed from $g=0$ to $g=3$.
In general,
one can do this recursively by taking one more of $u_n$ to be nonzero,
the effect will be including terms of higher degrees in $y$,
and so the curve will have larger genus.
Secondly,
the constant term of the polynomial in $y$ on the right-hand side of  \eqref{eqn:Deformation-u0u2}
is no longer a polynomial in $u_0, u_1, u_2$.
Thirdly,
the superpotential $\frac{1}{2} f$  is deformed to become an infinite formal power series in $x^2$.
There does not seem to have motivations to study such deformations from a traditional mathematical point of view.
So we have another nice example of how string theory enriches mathematical researches.

\section{Landau-Ginzburg Theory Associated with the Miniversal Deformation of the Airy Curve}

\label{sec:Landau-Ginzburg}

In last section we have seen that the miniversal deformation
$$y = \half x^2 + t_0$$
of the Airy curve $y = \half x^2$ encodes the information of $\frac{\pd F_0}{\pd u_n}(u_0)$.
In this section we take $W = y = \frac{1}{2} x^2 + t_0$ as the superpotential and consider the corresponding
Landau-Ginzburg theory by defining some correlation functions on the small phase space in this theory.
It is clear that one does not get a field theory in the sense of Section \ref{sec:Prelim},
nevertheless,
we will identify the correlation functions we define in this section with the corresponding correlation functions
in the theory of the 2D topological gravity on the small phase space.
The results in this section motivates the extension to the big phase space in the next section.

\subsection{The primary field and its descendant fields}

The primary field $\phi_0$ is defined by:
\be
\phi_0(x) : = \frac{\pd W}{\pd u_0} = (\pd_x L)_+ = 1,
\ee
where $L= (2W)^{1/2}$ is the Puiseux series:
\be
L = (x^2+ 2u_0)^{1/2} = x (1+\frac{2u_0}{x^2})^{1/2}
= x+ \frac{u_0}{x} - \frac{1}{2} \frac{u_0^2}{x^3} + \half \frac{u_0^3}{x^5} + \cdots.
\ee
It gives a basis of the Jacobian ring
\be
J_W := \bC[x]/\corr{\pd_x W}.
\ee
Inspired by \cite{Losev},
the authors of \cite{E-K-Y-Y} constructed
the $n$-th gravitational descendent field of $\phi_0(x)$ as follows:
\be \label{def:Descendants}
\sigma_n(\phi_0) = \frac{1}{(2n-1)!!} (L^{2n} \pd_xL)_+
= \frac{1}{(2n+1)!!} (\pd_x L^{2n+1})_+.
\ee

The following are some explicit examples:
\ben
&& \phi_0 = 1, \\
&& \sigma_1(\phi_0) = x^2 + u_0, \\
&& \sigma_2(\phi_0) = \frac{1}{3!!} (x^4 + 3u_0 x^2 + \frac{3}{2} u_0^2), \\
&& \sigma_3(\phi_0) = \frac{1}{5!!} (x^6 + 5u_0x^4+\frac{15}{2}u_0^2x^2 + \frac{5}{2}u_0^3), \\
&& \sigma_4(\phi_0) = \frac{1}{7!!} (x^8 + 7u_0x^6+ \frac{35}{2}u_0^2x^4+\frac{35}{2}u_0^3x^2 + \frac{35}{8} u_0^4), \\
&& \sigma_5(\phi_0) = \frac{1}{9!!} (x^{10} +9 u_0x^8+ \frac{63}{2}u_0^2x^6+ \frac{105}{2}u_0^3x^4+\frac{315}{8} u_0^4x^2 + \frac{63}{8} u_0^5).
\een
Since they are polynomials in $x$,
one can understand them as fields living on the Airy curve $y = \frac{1}{2}x^2$,
or maybe more appropriately,
on the deformed Airy curve $y = \half x^2 + u_0$.

\subsection{Variations of the descendant fields with respect to $u_0$}

It is clear that
\bea
&& \pd_{u_0} W = \phi_0(x) = 1, \\
&& \pd_{u_0} \phi_0(x) = 0.
\eea
By differentiating both sides of $L^2 = W$,
one gets:
\be \label{eqn:Pd-u0-L}
\pd_{u_0} L = \frac{1}{L}.
\ee

\begin{lem}
The variation of $\sigma_n(\phi_0)$ is given by:
\be
\pd_{u_0} \sigma_n(\phi_0) = \sigma_{n-1}(\phi_0),
\ee
for $n \geq 1$.
\end{lem}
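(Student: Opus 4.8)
The plan is to differentiate the closed-form expression $\sigma_n(\phi_0) = \frac{1}{(2n+1)!!}(\partial_x L^{2n+1})_+$ directly with respect to $u_0$, using the relation $\partial_{u_0} L = 1/L$ from \eqref{eqn:Pd-u0-L}. The whole computation reduces to one application of the chain rule together with some factorial bookkeeping, so the only conceptual point to be checked is that $\partial_{u_0}$ may be pulled through the truncation operator $(\cdot)_+$.

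First I would observe that $(\cdot)_+$, which extracts the non-negative powers of $x$ from a Laurent series, acts coefficientwise on the powers of $x$, and hence commutes with both $\partial_x$ and $\partial_{u_0}$: each coefficient is a power series in $u_0$, and $\partial_{u_0}$ differentiates that coefficient without altering the exponent of $x$ attached to it, so truncating before or after differentiating gives the same result. Granting this, I would compute $\partial_{u_0}(L^{2n+1}) = (2n+1) L^{2n}\, \partial_{u_0} L = (2n+1) L^{2n-1}$ by \eqref{eqn:Pd-u0-L}. Applying $\partial_x$ and the truncation then yields
\[
\partial_{u_0}\sigma_n(\phi_0) = \frac{1}{(2n+1)!!}\bigl((2n+1)\,\partial_x L^{2n-1}\bigr)_+.
\]

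Next I would simplify the constant using $(2n+1)!! = (2n+1)(2n-1)!!$, so that $(2n+1)/(2n+1)!! = 1/(2n-1)!!$, and recognize the right-hand side as exactly $\frac{1}{(2(n-1)+1)!!}(\partial_x L^{2(n-1)+1})_+ = \sigma_{n-1}(\phi_0)$. For the base case $n=1$ this recovers $(\partial_x L)_+ = \phi_0 = \sigma_0(\phi_0)$, consistent with the definition of the primary field at the start of the section.

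The main obstacle, such as it is, is purely the justification that $\partial_{u_0}$ commutes with $(\cdot)_+$; everything else is formal manipulation. I would make this precise by writing $L^{2n+1} = \sum_j c_j(u_0)\, x^{e_j}$ as a Laurent series whose coefficients are power series in $u_0$, noting that $\partial_{u_0}$ acts only on the $c_j$ while leaving the exponents $e_j$ fixed, so restricting to $e_j \geq 0$ before or after differentiating produces the same expression. With this in hand the lemma follows immediately from the chain rule and the factorial identity above.
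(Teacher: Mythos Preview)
Your proof is correct and follows essentially the same approach as the paper: differentiate the defining formula for $\sigma_n(\phi_0)$ under the truncation using $\partial_{u_0}L = 1/L$. The only cosmetic difference is that you work with the form $\frac{1}{(2n+1)!!}(\partial_x L^{2n+1})_+$ and swap the order of $\partial_{u_0}$ and $\partial_x$, whereas the paper applies the product rule to $\frac{1}{(2n-1)!!}(L^{2n}\partial_x L)_+$, yielding two terms that then recombine; your version is marginally cleaner but the idea is identical.
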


\begin{proof}
Take $\pd_{u_0}$ on both sides of \eqref{def:Descendants} and apply \eqref{eqn:Pd-u0-L}:
\ben
&& \pd_{u_0} \sigma_n(\phi_0) \\
& = & \frac{2n}{(2n-1)!!} (L^{2n-1} \cdot \pd_{u_0}L \cdot \pd_x L)_+
+ \frac{1}{(2n-1)!!} (L^{n} \pd_x (\pd_{u_0} L))_+ \\
& = &  \frac{2n}{(2n-1)!!} (L^{2n-1} \cdot \frac{1}{L} \cdot \pd_x L)_+
+ \frac{1}{(2n-1)!!} (L^{n} \pd_x (\frac{1}{L}))_+ \\
& = & \frac{1}{(2n-3)!!} (L^{2n-2} \pd_x L)_+ = \sigma_{n-1}(\phi_0).
\een
\end{proof}

Because $\sigma_n(\phi_0)$ is a degree $n$ polynomial in $x^2$,
with leading term $\frac{x^{2n}}{(2n-1)!!}$,
as a corollary,
we have

\begin{cor}
The following recursion relation holds:
\be \label{eqn:Rec-sigma(phi0)}
\sigma_n(\phi_0) = \frac{x^{2n}}{(2n-1)!!} + \int_0^{u_0} \sigma_{n-1}(\phi_0) du_0.
\ee
\end{cor}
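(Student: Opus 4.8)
The plan is to combine the two previous results about the descendant fields directly. The corollary to prove is the recursion
\[
\sigma_n(\phi_0) = \frac{x^{2n}}{(2n-1)!!} + \int_0^{u_0} \sigma_{n-1}(\phi_0)\, du_0,
\]
and the natural strategy is to integrate the variation formula
\[
\pd_{u_0} \sigma_n(\phi_0) = \sigma_{n-1}(\phi_0)
\]
from the preceding lemma with respect to $u_0$, then pin down the constant of integration using the explicit leading-term structure of $\sigma_n(\phi_0)$ as a polynomial in $x^2$.

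First I would observe from the defining formula $\sigma_n(\phi_0) = \frac{1}{(2n+1)!!}(\pd_x L^{2n+1})_+$ that $\sigma_n(\phi_0)$ is a polynomial of degree $n$ in $x^2$ whose top-degree term is $\frac{x^{2n}}{(2n-1)!!}$; this is visible in the explicit examples listed in the excerpt, and follows since $L = x + O(x^{-1})$ gives $L^{2n+1} = x^{2n+1} + O(x^{2n-1})$, whose derivative's polynomial part has leading term $(2n+1)x^{2n}$. The key point is that this leading term $\frac{x^{2n}}{(2n-1)!!}$ is independent of $u_0$, so the entire $u_0$-dependence of $\sigma_n(\phi_0)$ sits in the lower-order terms in $x^2$. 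Next I would integrate the lemma's identity $\pd_{u_0}\sigma_n(\phi_0) = \sigma_{n-1}(\phi_0)$ in $u_0$ over the interval $[0, u_0]$, which yields
\[
\sigma_n(\phi_0)\big|_{u_0} - \sigma_n(\phi_0)\big|_{u_0 = 0} = \int_0^{u_0} \sigma_{n-1}(\phi_0)\, du_0.
\]

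It then remains to evaluate the boundary term $\sigma_n(\phi_0)\big|_{u_0 = 0}$. Setting $u_0 = 0$ gives $L = x$, so $\sigma_n(\phi_0)\big|_{u_0=0} = \frac{1}{(2n+1)!!}(\pd_x x^{2n+1})_+ = \frac{(2n+1)x^{2n}}{(2n+1)!!} = \frac{x^{2n}}{(2n-1)!!}$, which is exactly the claimed leading term. Substituting this into the integrated identity produces the stated recursion. I do not anticipate any genuine obstacle here: the only subtlety is bookkeeping the normalization factors $(2n+1)!!$ versus $(2n-1)!!$ and confirming that the $u_0 = 0$ specialization kills all subleading terms, which is immediate since every term of $\sigma_n(\phi_0)$ other than the top one carries a positive power of $u_0$. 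Thus the corollary follows as a direct consequence of the lemma together with the elementary observation about the $u_0$-independence of the leading coefficient.
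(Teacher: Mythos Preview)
Your proof is correct and follows exactly the approach the paper takes: the paper derives the corollary immediately from the lemma $\pd_{u_0}\sigma_n(\phi_0)=\sigma_{n-1}(\phi_0)$ together with the observation that $\sigma_n(\phi_0)$ is a degree $n$ polynomial in $x^2$ with leading term $\frac{x^{2n}}{(2n-1)!!}$. Your explicit evaluation of the boundary term at $u_0=0$ simply spells out what the paper leaves implicit.
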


\subsection{Explicit expressions for descendant fields}

By induction one then gets:

\begin{prop}
An explicit solution of \eqref{eqn:Rec-sigma(phi0)} is given by:
\be \label{eqn:Sigma(phi0)}
\sigma_n(\phi_0) =  \sum_{j=0}^n \frac{1}{(2j-1)!!(n-j)!} x^{2j} u_0^{n-j}
\ee
\end{prop}

One can use \eqref{eqn:Sigma(phi0)} to get the following formula for the generating series of $\{\sigma_n(\phi_0)\}_{n \geq 0}$
($\sigma_0(\phi_0) = \phi_0$):
\be
\sum_{n \geq 0} t^n \sigma_n(\phi_0)
= e^{u_0t} (1 + \sqrt{\pi x^2t/2} e^{x^2t/2} \erf(\sqrt{x^2t/2}) ),
\ee
where $\erf(x)$ is the error function:
\be
\erf(x) = \frac{2\int_0^x e^{-t^2} dt} {\sqrt{\pi}}.
\ee

As another application of \eqref{eqn:Sigma(phi0)},
we have

\begin{prop}
The following recursion relation holds for $n \geq 0$:
\be \label{eqn:Sigma1-Sigman}
\begin{split}
\sigma_1(\phi_0) \cdot & (2n-1)!!\sigma_n(\phi_0)
= (2n+1)!! \sigma_{n+1}(\phi_0) \\
- & u_0 \cdot (2n-1)!! \sigma_n(\phi) + \frac{(2n-1)!!}{(n+1)!} u_0^{n+1}.
\end{split}
\ee
\end{prop}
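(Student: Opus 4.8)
The plan is to reduce the stated identity to a clean three-term recursion and then evaluate a single residue. Throughout write $P_n := (2n-1)!!\,\sigma_n(\phi_0)$, so that by the definition \eqref{def:Descendants} one has $P_n = (L^{2n}\pd_x L)_+$, and recall $\sigma_1(\phi_0) = x^2 + u_0$ together with $L^2 = x^2 + 2u_0$. In this notation (reading $\sigma(\phi)$ in the statement as $\sigma_n(\phi_0)$ and using $(2n+1)!!\sigma_{n+1}=P_{n+1}$), the desired formula \eqref{eqn:Sigma1-Sigman} becomes
\[
(x^2+u_0)P_n = P_{n+1} - u_0 P_n + \frac{(2n-1)!!}{(n+1)!}u_0^{n+1},
\]
which is equivalent to the recursion
\[
P_{n+1} = (x^2+2u_0)\,P_n - \frac{(2n-1)!!}{(n+1)!}u_0^{n+1}.
\]
So it suffices to establish this last identity.

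First I would split $L^{2n}\pd_x L = P_n + (L^{2n}\pd_x L)_-$ into its polynomial and principal parts. Since $L^2 = x^2 + 2u_0$ is a polynomial, $L^2 P_n$ is its own positive part; multiplying the splitting by $L^2$ and taking positive parts then gives
\[
P_{n+1} = (L^{2n+2}\pd_x L)_+ = (L^2\cdot L^{2n}\pd_x L)_+ = (x^2+2u_0)P_n + \bigl(L^2\,(L^{2n}\pd_x L)_-\bigr)_+ .
\]
Thus the entire problem is to show that the correction term $\bigl(L^2(L^{2n}\pd_x L)_-\bigr)_+$ equals $-\frac{(2n-1)!!}{(n+1)!}u_0^{n+1}$.

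The key observation is that $(L^{2n}\pd_x L)_-$ involves only the powers $x^{-2},x^{-4},\dots$; multiplication by $x^2+2u_0$ raises degrees by two, so the only nonnegative contribution is the constant coming from $x^2$ times the $x^{-2}$-coefficient of $L^{2n}\pd_x L$. Hence the correction term equals $[x^{-2}]\bigl(L^{2n}\pd_x L\bigr)$, which I would compute as a residue: writing $L^{2n}\pd_x L = \frac{1}{2n+1}\pd_x L^{2n+1}$ and expanding $L^{2n+1} = (x^2+2u_0)^{(2n+1)/2}$ by the binomial series, only the term with exponent $2n-2j = -2$, i.e. $j = n+1$, survives, contributing $\binom{(2n+1)/2}{n+1}(2u_0)^{n+1}(2n+1-2(n+1))$.

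The main (and only nonroutine) obstacle is then the double-factorial bookkeeping: one simplifies $\binom{(2n+1)/2}{n+1} = \frac{(2n+1)!!}{2^{n+1}(n+1)!}$ and combines the surviving factor $-1$, the prefactor $\frac{1}{2n+1}$, and the powers of $2$ to obtain exactly $-\frac{(2n-1)!!}{(n+1)!}u_0^{n+1}$, which closes the argument. As an alternative that avoids residues altogether, one could instead substitute the closed form \eqref{eqn:Sigma(phi0)} directly into the claimed recursion and compare the coefficient of each monomial $x^{2j}u_0^{n+1-j}$; this reduces to the elementary identity $\frac{2j-1}{n-j+1} + 2 = \frac{2n+1}{n-j+1}$ together with the two boundary cases $j=0$ and $j=n+1$, and I would record it as a cross-check.
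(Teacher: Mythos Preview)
Your argument is correct. The rearrangement to $P_{n+1}=(x^2+2u_0)P_n-\frac{(2n-1)!!}{(n+1)!}u_0^{n+1}$ is clean, the splitting $L^{2n}\pd_xL=P_n+(L^{2n}\pd_xL)_-$ and the observation that only the $x^{-2}$-coefficient survives after multiplying by $x^2+2u_0$ are both right, and the binomial/double-factorial bookkeeping checks out exactly as you describe.

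The paper itself does not spell out a proof here: it simply records the proposition as ``another application of \eqref{eqn:Sigma(phi0)}'', i.e.\ it takes the closed form $\sigma_n(\phi_0)=\sum_{j=0}^n\frac{1}{(2j-1)!!(n-j)!}x^{2j}u_0^{n-j}$ as known and leaves the verification to the reader. That is precisely your alternative cross-check at the end. So your primary route---working directly from the definition $P_n=(L^{2n}\pd_xL)_+$ via a single residue---is genuinely different from the paper's intended argument. The advantage of your approach is that it does not rely on having already established the explicit formula \eqref{eqn:Sigma(phi0)}; it extracts the recursion straight from the structural fact $L^2=x^2+2u_0$, and would generalize more readily to situations where a closed form for the descendants is not available. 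The paper's approach, by contrast, is shorter once \eqref{eqn:Sigma(phi0)} is in hand, since it reduces to the elementary identity you wrote down.
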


Using generating series,
the recursion relations \eqref{eqn:Sigma1-Sigman} can be explicitly solved as follows:
\be
\sum_{n \geq 0} (2n-1)!!\sigma_n(\phi_0) t^n
= \frac{\sqrt{1-2u_0t}}{1-t(x^2+2u_0)}.
\ee
It follows that
\be \label{eqn:Sigman(phi0)-Explicit2}
(2n-1)!!\sigma_n(\phi_0)
= \sum_{j=0}^n \frac{(2j-3)!!}{j!} \cdot u_0^j (x^2+2u_0)^{n-j}.
\ee

\subsection{Descendant operator algebra and structure constants}

It is easy to see that
\be
(2j-1)!! \sigma_j(\phi_0) \cdot (2k-1)!! \sigma_k(\phi_0)
= \sum_{j=0}^{j+k} c_{jk}^l u_0^{j+k-l} \cdot (2l-1)!! \sigma_l(\phi_0)
\ee
for some constants $c_{jk}^l$.
In other words,
$\{\sigma_n(\phi_0)\}_{n \geq 0}$ generate an algebra over the ring $\bC[u_0]$.
We will call this algebra the {\em descendant algebra on the small phase}.

To determine the structure constants $c_{jk}^lu_0^{j+k-l}$,
let
\be
f_n(x) = \sum_{j=0}^n \frac{(2n-1)!!}{(2j-1)!!(n-j)!} x^{2j},
\ee
then
\be
f_j(x) \cdot f_k(x) = \sum_{l=0}^{j+k} c_{jl}^l f_l(x).
\ee
For example,
\ben
&& f_0(x) = 1, \\
&& f_1(x) = x^2 + 1, \\
&& f_2(x) = x^4 + 3 x^2 + \frac{3}{2}, \\
&& f_3(x) = x^6 + 5 x^4+\frac{15}{2} x^2 + \frac{5}{2} , \\
&& f_4(x) = x^8 + 7 x^6+ \frac{35}{2} x^4+\frac{35}{2} x^2 + \frac{35}{8}, \\
&& f_5(x) = x^{10} +9 x^8+ \frac{63}{2} x^6+ \frac{105}{2} x^4+\frac{315}{8} x^2 + \frac{63}{8}.
\een
By \eqref{eqn:Sigma1-Sigman},
for $n \geq 1$,
\be \label{eqn:f1fn}
f_1 f_n = f_{n+1} - f_n + \frac{(2n-1)!!}{(n+1)!}.
\ee
One can check that
\ben
&& f_2 f_2 = (f_4 - f_3 - \frac{1}{2} f_2) + (\frac{1}{2} f_1 + \frac{5}{8} f_0), \\
&& f_2 f_3 = (f_5 - f_4 - \frac{1}{2} f_3) + (\frac{5}{8} f_1 + \frac{7}{8} f_0), \\
&& f_2 f_4 = (f_6 - f_5 - \frac{1}{2} f_4) + (\frac{7}{8} f_2 + \frac{21}{16} f_0), \\
&& f_3 f_3 = (f_6 - f_5 - \frac{1}{2} f_4 - \frac{1}{2} f_3) + (\frac{5}{8} f_2 + \frac{7}{8} f_1 + \frac{21}{16} f_0).
\een
One recognizes  the structure constants as the coefficients of the following series:
\be
\begin{split}
\sqrt{1-2x} = & 1- \sum_{n=1}^\infty \frac{(2n-3)!!}{n!} x^n \\
= & 1 - x - \frac{1}{2}x^2 - \frac{1}{2} x^3 - \frac{5}{8} x^4 - \frac{7}{8} x^5 - \frac{21}{16} x^6 - \cdots,
\end{split}
\ee

\begin{prop}
For $k \geq j \geq 1$,
\be \label{eqn:fjfk}
\begin{split}
f_j f_k = & f_{j+k} - \sum_{l=1}^j \frac{(2l-3)!!}{l!} f_{j+k-l} \\
+ & \sum_{l=1}^j  \frac{(2(k+l)-3)!!}{(k+l)!} f_{j-l}.
\end{split}
\ee
\end{prop}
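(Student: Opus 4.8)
The plan is to trivialise the multiplication by expanding every $f_n$ explicitly as a polynomial in the single variable $a := x^2+2$, so that the structure constants in \eqref{eqn:fjfk} can be matched coefficient by coefficient. Since $f_1 = x^2+1 = a-1$ and $\frac{(2n-1)!!}{(n+1)!} = g_{n+1}$ with the shorthand $g_l := \frac{(2l-3)!!}{l!}$, the rule \eqref{eqn:f1fn} becomes $(a-1)f_n = f_{n+1}-f_n+g_{n+1}$, i.e.
\[
f_{n+1} = a\,f_n - g_{n+1}, \qquad f_0 = 1 .
\]
(The same recursion drops out of the generating series $\sum_n (2n-1)!!\sigma_n(\phi_0)t^n = \frac{\sqrt{1-2u_0t}}{1-t(x^2+2u_0)}$ on setting $u_0=1$, since $f_n = (2n-1)!!\sigma_n(\phi_0)\big|_{u_0=1}$.) Solving it yields the closed form
\[
f_n = a^n - \sum_{l=1}^n g_l\,a^{n-l} = \sum_{l=0}^n \tilde g_l\,a^{n-l}, \qquad \tilde g_0 := 1,\quad \tilde g_l := -g_l\ (l\ge 1).
\]
With the paper's convention $(-3)!!=-1$ one has $g_0=-1$, so in fact $\tilde g_l=-g_l$ for all $l\ge0$ and $\sum_{l\ge0}\tilde g_l t^l=\sqrt{1-2t}$ is exactly the expansion of $\sqrt{1-2x}$ recalled above.

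Next I would substitute this closed form into the three groups of terms in \eqref{eqn:fjfk} and expand each as a polynomial in $a$. On one hand $f_jf_k=\sum_{p=0}^{j}\sum_{q=0}^{k}\tilde g_p\tilde g_q\,a^{j+k-p-q}$; on the other, $f_{j+k}-\sum_{l=1}^{j}g_l f_{j+k-l}=\sum_{l=0}^{j}\tilde g_l\sum_{r=0}^{j+k-l}\tilde g_r\,a^{j+k-l-r}$, a second truncated $\tilde g$-convolution. Comparing the coefficient of a fixed power $a^{j+k-N}$, the two convolutions differ only in the lower summation limit ($0$ versus $\max(0,N-k)$), so their difference is supported on the powers $a^m$ with $0\le m\le j-1$ and equals $\sum_{m=0}^{j-1} a^m\sum_{l=0}^{j-m-1}\tilde g_l\tilde g_{j+k-m-l}$. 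Writing $M:=j-m$, the identity \eqref{eqn:fjfk} thus reduces to the scalar claims
\[
\sum_{l=0}^{M-1}\tilde g_l\,\tilde g_{k+M-l} = -\sum_{l=1}^{M} g_{k+l}\,\tilde g_{M-l}, \qquad 1\le M\le j,
\]
whose right-hand side is precisely the coefficient of $a^{j-M}$ in the remaining group $\sum_{l=1}^{j}g_{k+l}f_{j-l}$.

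Finally I would settle these scalar identities. The extremal terms ($l=0$ on the left, $l=M$ on the right) both equal $-g_{k+M}$ and cancel, leaving $\sum_{l=1}^{M-1}g_l g_{k+M-l}=\sum_{l=1}^{M-1}g_{k+l}g_{M-l}$, which holds because the substitution $l\mapsto M-l$ is a bijection of $\{1,\dots,M-1\}$ sending the summand $g_l g_{k+M-l}$ to $g_{M-l}g_{k+l}$. I expect the only genuine obstacle to be organizational: one must keep the truncation ranges $p\le j$, $q\le k$, $r\le j+k-l$ straight so that exactly the boundary terms survive, and check that every $g$-index stays $\ge1$ (this is where the hypothesis $k\ge j$ is convenient). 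A pleasant feature of this route is that no nontrivial convolution identity for $\sqrt{1-2t}$ is needed --- the bare reindexing symmetry closes everything. As an alternative one can induct on $j$ from the base case \eqref{eqn:f1fn}, multiplying the inductive hypothesis by $f_1$ via $f_{n+1}=af_n-g_{n+1}$; that works as well but forces one to invoke the convolution identity $2g_n=\sum_{l=1}^{n-1}g_lg_{n-l}$ arising from $(\sqrt{1-2t})^2=1-2t$.
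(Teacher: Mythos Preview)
Your argument is correct and complete in all essential respects; the reduction to the scalar identities and their verification by the reindexing $l\mapsto M-l$ is clean and needs no convolution identity for $\sqrt{1-2t}$. The bookkeeping you flag as the only obstacle is indeed routine once one notes that both $f_jf_k$ and the ``first group'' share the same upper truncation $\min(j,N)$ in the $a^{j+k-N}$ coefficient, so only the lower limit $\max(0,N-k)$ creates a discrepancy.

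This is, however, a genuinely different route from the paper's. The paper argues by induction on $j$: it rewrites $f_{j+1}=f_1f_j+f_j-\frac{(2j-1)!!}{(j+1)!}$, multiplies through by $f_k$, applies the inductive hypothesis to $f_jf_k$, and then expands each resulting $f_1f_m$ using \eqref{eqn:f1fn}, after which the desired formula emerges from telescoping cancellations. Your approach instead exploits the closed form $f_n=\sum_{l=0}^n\tilde g_l\,a^{n-l}$ (equivalently \eqref{eqn:Sigman(phi0)-Explicit2} at $u_0=1$) to turn the whole statement into a coefficient comparison in the single variable $a$, which collapses to a symmetric sum. What your method buys is transparency: no induction, no hidden cancellations, and the structure constants visibly come from the square-root generating series. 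What the paper's induction buys is that it never leaves the $f$-basis and uses nothing beyond the single relation \eqref{eqn:f1fn}; in particular it does not require first establishing the closed form for $f_n$. Your closing remark is apt: if one tries the inductive route, the step where the constant terms match does rely on $2g_n=\sum_{l=1}^{n-1}g_lg_{n-l}$, which is implicit in the paper's ``obvious cancellations.''
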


\begin{proof}
We prove \eqref{eqn:fjfk} by induction on $j$.
For $j=1$,
\eqref{eqn:fjfk} is just \eqref{eqn:f1fn}.
Suppose that \eqref{eqn:fjfk} holds for some $j \geq 1$.
Then by \eqref{eqn:f1fn},
\ben
f_{j+1} = f_1f_j + f_j  - \frac{(2j-1)!!}{(j+1)!}.
\een
And so for $k \geq j+1$,
\ben
f_{j+1}f_k & = & (f_1f_j + f_j - \frac{(2j-1)!!}{(j+1)!}) \cdot f_k \\
& = & f_1 \cdot \biggl(f_{j+k} - \sum_{l=1}^j \frac{(2l-3)!!}{l!} f_{j+k-l}
+ \sum_{l=1}^j  \frac{(2(k+l)-3)!!}{(k+l)!} f_{j-l} \biggr) \\
& + & \biggl( f_{j+k} - \sum_{l=1}^j \frac{(2l-3)!!}{l!} f_{j+k-l}
+ \sum_{l=1}^j  \frac{(2(k+l)-3)!!}{(k+l)!} f_{j-l} \biggr) \\
& - &  \frac{(2j-1)!!}{(j+1)!} f_k \\
& = & \biggl( f_{j+k+1} - f_{j+k} + \frac{(2(j+k)-1)!!}{(j+k+1)!} \biggr) \\
&- & \sum_{l=1}^j \frac{(2l-3)!!}{l!} \biggl( f_{j+k-l+1} - f_{j+k-l} + \frac{(2(j+k-l)-1)!!}{(j+k-l+1)!} \biggr) \\
& + &  \sum_{l=1}^j  \frac{(2(k+l)-3)!!}{(k+l)!} \biggl( f_{j-l+1} - f_{j-l} + \frac{(2(j-l)-1)!!}{(j-l+1)!} \biggr) \\
& + & \biggl( f_{j+k} - \sum_{l=1}^j \frac{(2l-3)!!}{l!} f_{j+k-l}
+ \sum_{l=1}^j  \frac{(2(k+l)-3)!!}{(k+l)!} f_{j-l} \biggr) \\
& - &  \frac{(2j-1)!!}{(j+1)!} f_k.
\een
Here in the last equality we have used \eqref{eqn:f1fn}.
The proof is completed by obvious cancelations.
\end{proof}

\subsection{Some $n$-point correlation functions on the small phase space}

In the physics literature \cite{DVV1},
the authors first derived based on physical arguments  a formula for three-point function   on the small phase space
in the Landau-Ginzburg model associated with $W$,
then obtained formulas for two-point functions and one-point functions by integrations.
The free energy was obtained from the one-point functions in another work of the same authors \cite{DVV2}
using the weighted homogeneity of the free energy.
General $n$-point functions ($n \geq 4$) can be obtained by differentiating the three-point function.
For generalizations that include gravitational descendants and extension to big phase space,
we use \cite{E-K-Y-Y, E-Y-Y, Mukherjee} as references.
In this subsection we will present a mathematical reformulation of \cite{DVV1, E-K-Y-Y} by reversing the above procedure.
We will take the formula for one-point function as the definition of the one-point function,
and define $n$-point functions ($n \geq 2$) by taking derivatives.

We define the one-point function on the small phase space by:
\be \label{eqn:One-Point}
\corrr{\phi_0(x)}_0(u_0)
=  \frac{1}{3} \res_{x=\infty} ( L^3 dx)
= \frac{1}{3} \res ((2W)^{3/2} dx),
\ee
and so for $n \geq 2$,
the $n$-point function are defined by:
\be
\corrr{ \phi_0(x)^n}_0(u_0) = \frac{\pd}{\pd u_0} \corr{ \phi_0(x)^{n-1}}_0(u_0).
\ee
Using the equation \eqref{eqn:Pd-u0-L},
one can check that for $n \geq 2$,
\be \label{eqn:Cor-Phi0n}
\begin{split}
& \corrr{ \phi_0(x)^n}_0(u_0) \\
= & \begin{cases}
\res (L dx ), & n = 2, \\
(-1)^{n-1}(2n-5)!! \res\biggl(\frac{1}{L^{2n-5}}dx \biggr) = 0, & n \geq 4.
\end{cases}
\end{split}
\ee

In \cite[(46)]{E-K-Y-Y},
the following generalization of \eqref{eqn:One-Point} was given:
\be
\corrr{\sigma_n(\phi_0)}_0(u_0)  = \frac{1}{(2n+3)!!} \res(L^{2n+3}dx).
\ee
We will take this as the definition of $\corrr{\sigma_n(\phi_0)}_0(u_0)$,
and so for $m \geq 1$, define
\be
\corrr{\sigma_n(\phi_0) \phi_0(x)^m}_0(u_0) = \frac{\pd}{\pd u_0} \corrr{\sigma_n(\phi_0) \phi_0(x)^{m-1}}_0(u_0).
\ee
One can easily see that
\be
\corrr{\sigma_n(\phi_0) \phi_0(x)^m}_0(u_0)
=  \frac{\prod_{j=0}^{m-1} (2n+3-2j)}{(2n+3)!!} \res(L^{2n-2m+3}dx).
\ee
In particular,
when $n=0$,
\be
\corrr{ \phi_0(x)^{m+1}}_0(u_0)
=  \frac{\prod_{j=0}^{m-1} (3-2j)}{3} \res(L^{-2m+3}dx).
\ee
This matches with \eqref{eqn:Cor-Phi0n}.
Also when  $m=3$,
\ben
&& \corr{\sigma_n(\phi_0) \phi_0^2}_0(u_0) =  \frac{1}{(2n-1)!!} \res (L^{2n-1} dx)
= \res\biggl( \frac{\sigma_n(\phi_0) \phi_0\phi_0}{\pd_x W} dx \biggr).
\een
Inspired by the last equality and \cite{DVV1},
we define
\be \label{Def:3-Point}
\begin{split}
& \corrr{ \sigma_{n_1}(\phi_0) \sigma_{n_2}(\phi_0) \sigma_{n_3}(\phi_0) }_0(u_0) \\
= & \res \biggl( \frac{\sigma_{n_1}(\phi_0) \sigma_{n_2}(\phi_0) \sigma_{n_3}(\phi_0) }{\pd_x W} dx \biggr),
\end{split}
\ee
and for $m \geq 1$ recursively define:
\be
\begin{split}
& \corrr{ \sigma_{n_1}(\phi_0) \sigma_{n_2}(\phi_0) \sigma_{n_3}(\phi_0) \phi_0^m }_0(u_0) \\
= & \pd_{u_0}
\corr{ \sigma_{n_1}(\phi_0) \sigma_{n_2}(\phi_0) \sigma_{n_3}(\phi_0) \phi_0^{m-1} }_0(u_0).
\end{split}
\ee

\subsection{Compatibility of the definitions}

In last subsection,
we have given the definitions of the correlation functions $ \corrr{\sigma_n(\phi_0) \phi_0(x)^m}_0(u_0)$ and
 $\corrr{ \sigma_{n_1}(\phi_0) \sigma_{n_2}(\phi_0) \sigma_{n_3}(\phi_0) \phi_0^m }_0(u_0)$.
We now check their compatibility.
First we need the following:

\begin{prop}
The following formulas hold:
\bea
&& \corrr{ \sigma_{n}(\phi_0)  }_0(u_0) = \frac{u_0^{n+2} }{(n+2)!}, \label{eqn:Explicit-2-Point} \\
&& \corrr{ \sigma_{n_1}(\phi_0) \sigma_{n_2}(\phi_0) \sigma_{n_3}(\phi_0) }_0(u_0)
= \frac{u_0^{n_1+n_2+n_3} }{\prod_{j=1}^3 n_j!}. \label{eqn:Explicit-3-Point}
\eea
\end{prop}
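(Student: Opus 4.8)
The two formulas to establish are the explicit $2$-point function $\corrr{\sigma_n(\phi_0)}_0(u_0) = u_0^{n+2}/(n+2)!$ and the explicit $3$-point function $\corrr{\sigma_{n_1}(\phi_0)\sigma_{n_2}(\phi_0)\sigma_{n_3}(\phi_0)}_0(u_0) = u_0^{n_1+n_2+n_3}/\prod_j n_j!$. My plan is to treat these as residue computations, using the definitions $\corrr{\sigma_n(\phi_0)}_0(u_0) = \frac{1}{(2n+3)!!}\res(L^{2n+3}\,dx)$ and $\corrr{\sigma_{n_1}(\phi_0)\sigma_{n_2}(\phi_0)\sigma_{n_3}(\phi_0)}_0(u_0) = \res\bigl(\frac{\sigma_{n_1}(\phi_0)\sigma_{n_2}(\phi_0)\sigma_{n_3}(\phi_0)}{\pd_x W}\,dx\bigr)$, where $L = (x^2+2u_0)^{1/2}$ and $W = \half x^2 + u_0$, so $\pd_x W = x$.

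For the $2$-point function, I would compute $\res_{x=\infty}(L^{2n+3}\,dx)$ directly from the binomial expansion $L^{2n+3} = x^{2n+3}(1 + 2u_0 x^{-2})^{(2n+3)/2} = \sum_{j\geq 0}\binom{(2n+3)/2}{j}(2u_0)^j x^{2n+3-2j}$. The residue at infinity picks out the coefficient of $x^{-1}$, i.e. the term with $2n+3-2j = -1$, forcing $j = n+2$. This gives $\res(L^{2n+3}\,dx) = \binom{(2n+3)/2}{n+2}(2u_0)^{n+2}$ (up to the sign convention for residue at infinity), and the plan is to simplify $\frac{1}{(2n+3)!!}\binom{(2n+3)/2}{n+2}2^{n+2}$ to $\frac{1}{(n+2)!}$ using the identity $\binom{(2n+3)/2}{n+2} = \frac{(2n+3)!!}{2^{n+2}(n+2)!}\cdot(\text{a signless factor})$ that comes from expanding the generalized binomial coefficient into a product of half-integers. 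This is a clean but bookkeeping-heavy double-factorial manipulation; I expect it to go through mechanically.

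For the $3$-point function, I would substitute $\pd_x W = x$ and use the explicit polynomials $\sigma_n(\phi_0) = \sum_{j=0}^n \frac{1}{(2j-1)!!(n-j)!}x^{2j}u_0^{n-j}$ from \eqref{eqn:Sigma(phi0)}, so that the integrand becomes $\frac{1}{x}\prod_{i=1}^3 \sigma_{n_i}(\phi_0)$ and the residue extracts the coefficient of $x^{0}$ in the product of the three polynomials, i.e. the term where only the constant ($j=0$) pieces survive. The $j=0$ term of $\sigma_{n_i}(\phi_0)$ is $\frac{u_0^{n_i}}{n_i!}$, so multiplying the three constant terms yields exactly $\frac{u_0^{n_1+n_2+n_3}}{n_1!\,n_2!\,n_3!}$, and the residue then produces the claimed answer. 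The subtlety, and the step I expect to require the most care, is verifying that only the constant terms contribute: since all $\sigma_{n_i}(\phi_0)$ are polynomials in $x^2$ with no negative powers, dividing by $x$ produces a Laurent series whose sole residue-contributing term is indeed the overall constant of the product, so no higher-$j$ mixing survives. I would make this rigorous by noting that any product term involving $x^{2j}$ with $j\geq 1$ has no $x^{-1}$ component after dividing by $x$.

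The main obstacle I anticipate is not conceptual but the sign and normalization conventions for the residue at infinity in the $2$-point case: one must be careful that $\res_{x=\infty}$ is taken with the orientation that makes \eqref{eqn:One-Point} consistent with the $n\geq 2$ reduction formulas \eqref{eqn:Cor-Phi0n}. A safe route is to cross-check the $2$-point formula against the already-computed special values — for instance confirming $\corrr{\sigma_0(\phi_0)}_0 = \corrr{\phi_0}_0 = \frac{u_0^2}{2}$, which the formula gives at $n=0$ — and to verify the $3$-point formula reduces correctly to the $2$-point formula upon setting one index so that a $\sigma$ degenerates to $\phi_0$. These consistency checks pin down all conventions, after which both identities follow from the residue extractions described above.
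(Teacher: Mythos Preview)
Your proposal is correct. For the three-point function your argument is exactly the paper's: use \eqref{eqn:Sigma(phi0)} and the definition \eqref{Def:3-Point}, observe that $\pd_x W = x$, and extract the constant term of the product of the three polynomials in $x^2$.

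For the one-point function your route differs from the paper's. You expand $L^{2n+3}$ binomially in powers of $x$ and read off the $x^{-1}$-coefficient directly, reducing the claim to the double-factorial identity $\binom{(2n+3)/2}{n+2}2^{n+2}=(2n+3)!!/(n+2)!$. The paper instead integrates by parts, $\res(L^{2n+3}\,dx)=-\res(x\,dL^{2n+3})$, substitutes the inverse expansion $x=L-\sum_{m\ge1}\frac{(2m-3)!!}{m!}\,u_0^{m}L^{1-2m}$, and extracts the residue in the $L$-variable. Your computation is a touch more elementary and self-contained; the paper's has the advantage that it reuses the expansion already derived in \eqref{eqn:Der-F0} and is the same maneuver that later proves the big-phase-space one-point identity $\corrr{\phi_n}_0(\bu)=\pd_{t_n}F_0$. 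Your sign worry is harmless: with the convention $\res_{x=\infty}$ equal to the coefficient of $x^{-1}$ (which your $n=0$ check confirms), everything lines up without any extra sign.
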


\begin{proof}
Formula \eqref{eqn:Explicit-3-Point} follows directly from the explicit formula for $\sigma_n(\phi_0)$ given by \eqref{eqn:Sigma(phi0)} and the definition
\eqref{Def:3-Point}.
For the proof of \eqref{eqn:Explicit-2-Point},
we proceed as follows:
\ben
&& \corrr{ \sigma_{n}(\phi_0)  }_0(u_0) \\
& = & \frac{1}{(2n+3)!!} \res( L^{2n+3} dx)  = - \frac{1}{(2n+3)!!} \res(x dL^{2n+3}) \\
& = & -\frac{1}{(2n+1)!!} \res (L^{2n+2} (L - \sum_{m=1}^\infty \frac{(2m-3)!!}{m!} \frac{1}{L^{2m-1}}) dL) \\
& = &  \frac{u_0^{n+2} }{(n+2)!}.
\een
In the above we have uased:
\ben
x = (L^2 - 2u_0)^{1/2} = L (1- \frac{2u_0}{L^2})^{1/2}
= L - \sum_{m=1}^\infty \frac{(2m-3)!!}{m!} \frac{1}{L^{2m-1}},
\een
\end{proof}

By taking derivatives on both sides of \eqref{eqn:Explicit-2-Point},
\ben
&& \corrr{ \sigma_{n}(\phi_0)  \phi_0 }_0(u_0) = \frac{u_0^{n+1} }{(n+1)!}, \\
&& \corrr{ \sigma_{n}(\phi_0)  \phi_0^2 }_0(u_0) = \frac{u_0^{n} }{n!}.
\een
On the other hand,
taking $n_1= n$ and $n_2 = n_3 = 0$ in \eqref{eqn:Explicit-3-Point},
\ben
&& \corrr{ \sigma_{n}(\phi_0)  \phi_0^2 }_0(u_0) = \frac{u_0^n }{n!}.
\een
It is a match.
One also has for $n_3=0$ in \eqref{eqn:Explicit-3-Point}:
\be
\corrr{ \sigma_{n_1}(\phi_0) \sigma_{n_2}(\phi_0) \phi_0 }_0(u_0)
= \frac{u_0^{n_1+n_2} }{n_1!n_2!},
\ee
and so after integration one would have obtained
\be
\corrr{ \sigma_{n_1}(\phi_0) \sigma_{n_2}(\phi_0)}_0(u_0)
= \frac{u_0^{n_1+n_2+1} }{(n_1+n_2+1) \cdot n_1!n_2!},
\ee
if the left-hand side were already defined.

\subsection{Identification with genus zero $n$-point functions in 2D topological gravity on the small phase space}

\begin{prop}
For $n_1, n_2, n_3, m \geq 0$,
\be
\corrr{ \sigma_{n_1}(\phi_0) \sigma_{n_2}(\phi_0) \sigma_{n_3}(\phi_0) \phi_0^m}_0(t_0)
= \frac{\pd^{m+3}F_0}{\pd t_{n_1} \pd t_{n_2} \pd t_{n_3} \pd t_0^m}(t_0).
\ee
\end{prop}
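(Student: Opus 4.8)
The plan is to compute both sides of the identity independently as explicit functions of $t_0$ and to verify that they coincide. Throughout write $N = n_1+n_2+n_3$, and recall that the change of variables $t_k=(2k+1)!!\,u_k$ forces $t_0=u_0$, so that $\sigma_n(\phi_0)$ plays the role of the gravitational descendant $\tau_n$ and $\phi_0=\sigma_0(\phi_0)$ plays the role of $\tau_0$. Because only the index $0$ appears on the puncture insertions, the potentially troublesome factors $(2n+1)!!$ relating $\pd_{u_n}$ to $\pd_{t_n}$ never enter: here $\pd_{u_0}=\pd_{t_0}$, which is exactly why the two normalizations will cooperate.

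First I would evaluate the left-hand side. By the recursive definition of correlators carrying $\phi_0$-punctures, iterated $m$ times,
\be
\corrr{\sigma_{n_1}(\phi_0)\sigma_{n_2}(\phi_0)\sigma_{n_3}(\phi_0)\phi_0^m}_0(u_0)
= \pd_{u_0}^m\,\corrr{\sigma_{n_1}(\phi_0)\sigma_{n_2}(\phi_0)\sigma_{n_3}(\phi_0)}_0(u_0).
\ee
The explicit three-point formula \eqref{eqn:Explicit-3-Point} identifies the bare three-point function with $u_0^{N}/(n_1!\,n_2!\,n_3!)$, and differentiating $m$ times in $u_0$ then yields
\be
\corrr{\sigma_{n_1}(\phi_0)\sigma_{n_2}(\phi_0)\sigma_{n_3}(\phi_0)\phi_0^m}_0(u_0)
= \frac{N!}{(N-m)!}\,\frac{u_0^{N-m}}{n_1!\,n_2!\,n_3!},
\ee
with the understanding that the expression vanishes when $m>N$.

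Next I would evaluate the right-hand side. Expanding $F_0$ in the coupling constants, differentiating $m+3$ times, and then restricting to $t_i=0$ for $i\neq 0$ produces the generating series
\be
\frac{\pd^{m+3}F_0}{\pd t_{n_1}\pd t_{n_2}\pd t_{n_3}\pd t_0^m}(t_0)
= \sum_{k\geq 0}\frac{t_0^k}{k!}\,\corr{\tau_{n_1}\tau_{n_2}\tau_{n_3}\tau_0^{m+k}}_0.
\ee
The selection rule \eqref{eqn:SelRule} forces $n_1+n_2+n_3 = m+k$, so exactly the single value $k=N-m$ contributes, leaving $\frac{t_0^{N-m}}{(N-m)!}\corr{\tau_{n_1}\tau_{n_2}\tau_{n_3}\tau_0^{N}}_0$. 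To finish I need the genus-zero value $\corr{\tau_{n_1}\tau_{n_2}\tau_{n_3}\tau_0^{N}}_0 = N!/(n_1!\,n_2!\,n_3!)$, after which substituting $t_0=u_0$ matches the two computations term by term.

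The one genuinely substantive input, and the place I expect the main work, is this last genus-zero evaluation, since it is where intersection theory on $\Mbar_{0,n}$ enters beyond the already-established B-side residue formulas. I would obtain it either from Witten's closed formula $\corr{\tau_{a_1}\cdots\tau_{a_n}}_0 = (n-3)!/(a_1!\cdots a_n!)$, or, to keep the argument self-contained, by applying the string equation $N$ times to peel off the $\tau_0$ insertions and lower $n_1,n_2,n_3$ down to the base case $\corr{\tau_0^3}_0=1$; the combinatorics of the reduction is precisely what produces the multinomial $N!/(n_1!\,n_2!\,n_3!)$. I would present this reduction carefully, as it is the step that pins the two sides together.
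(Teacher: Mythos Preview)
Your argument is correct and self-contained: the left-hand side is handled exactly as the paper does (via \eqref{eqn:Explicit-3-Point} and the definition of the extra $\phi_0$ insertions as $\pd_{u_0}$), and your computation of the right-hand side via the selection rule together with the string-equation reduction $\corr{\tau_{n_1}\tau_{n_2}\tau_{n_3}\tau_0^{N}}_0 = N!/(n_1!n_2!n_3!)$ is valid and uses only tools already recorded in the paper.

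The route, however, is genuinely different from the paper's. The paper does not touch individual intersection numbers or the string equation here; instead it works on the big phase space and exploits the dispersionless KdV hierarchy \eqref{eqn:Dispersionless-Hierarchy}. Starting from $u_{(0)}=\pd_{t_0}^2 F_0$ and the flows $\pd_{t_n}u_{(0)}=\pd_{t_0}(u_{(0)}^{n+1}/(n+1)!)$, it chains three such flows to obtain $\pd_{t_{n_1}}\pd_{t_{n_2}}\pd_{t_{n_3}}u_{(0)} = \pd_{t_0}^2\bigl(u_{(0)}^{N}\pd_{t_0}u_{(0)}/(n_1!n_2!n_3!)\bigr)$, integrates twice in $t_0$, and only then restricts to the small phase space where $u_{(0)}(t_0)=t_0$. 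Your approach is more elementary and combinatorial, staying entirely on the small phase space and using only the string equation; the paper's approach is more structural, showcasing the integrable-hierarchy machinery that underlies the whole section and giving a big-phase-space identity before specialization. Either is acceptable for proving the proposition as stated.
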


\begin{proof}
It suffices to prove the $m=0$ case.
Recall $u_{(0)} = \pd_{u_0}^2F_0$ satisfies the dispersionless KdV rierarchy \eqref{eqn:Dispersionless-Hierarchy}:
\ben
&& \pd_{t_n} u_{(0)} = \pd_{t_0} \frac{u_{(0)}^{n+1}}{(n+1)!}.
\een
Therefore, we have
\ben
&& \pd_{t_{n_1}} \pd_{t_{n_2}} \pd_{t_{n_3}} u_{(0)}
= \pd_{t_{n_1}} \pd_{t_{n_2}}\pd_{t_0} \frac{u_{(0)}^{n_3+1}}{(n_3+1)!} \\
& = & \pd_{t_{n_1}}\pd_{t_0} \biggl( \frac{u_{(0)}^{n_3}}{n_3!}  \pd_{t_{n_2}} u_{(0)} \biggr)
=  \pd_{t_{n_1}}\pd_{t_0} \biggl( \frac{u_{(0)}^{n_3} }{n_3!}  \pd_{t_0} \frac{u_{(0)}^{n_2+1}}{(n_2+1)!} \biggr) \\
& = & \pd_{t_{n_1}} \pd_{t_0}^2 \biggl( \frac{u_{(0)}^{n_2+n_3+1}}{n_2!n_3!(n_2+n_3+1)} \biggr)
= \pd_{t_0}^2 \biggl( \frac{u_{(0)}^{n_1+n_2+n_3}}{n_1!n_2!n_3!} \pd_{t_0} u_{(0)} \biggr).
\een
After integration with respect to $t_0$ twice,
one gets:
\ben
&&  \pd_{t_{n_1}} \pd_{t_{n_2}} \pd_{t_{n_3}} F_0
=  \frac{u_{(0)}^{n_1+n_2+n_3}}{n_1!n_2!n_3!} \pd_{t_0} u_{(0)}.
\een
Now we restrict to the small phase space and use the fact that
$u_{(0)}(t_0) = t_0 = u_0$ to get:
\ben
 \pd_{t_{n_1}} \pd_{t_{n_2}} \pd_{t_{n_3}} F_0 (t_0)
=  \frac{u_0^{n_1+n_2+n_3}}{n_1!n_2!n_3!}.
\een
The completed by comparing with \eqref{eqn:Explicit-3-Point}.
\end{proof}

\section{Field Theory Associated with Special Deformation of the Airy Curve}
\label{sec:Deformed-LG-Theory}

The discussion in last section clearly indicates that to obtain a field that is the mirror theory to the
theory of 2D topological gravity,
miniversal deformation of the Airy curve does not suffice.
In \S \ref{sec:Deformation} we have constructed a general deformation
\be
x = f - \sum_{n \geq 0} (2n+1) u_n f^{2n-1} - \sum_{n \geq 0} \frac{\pd F_0}{\pd u_n}(\bu) \cdot f^{-2n-3}.
\ee
and
$$ 2y = f^2 = c_{0} + c_1 x^2 + c_2 x^4 + c_3 x^6 + \cdots,$$
of the Airy curve $y = \half x^2$ associated with the genus zero free energy $F_0(\bu)$.
In this section we will take $W = y$ as the superpotential and construct a field theory
in genus zero by generalizing the discussions in last section.
In next section we will extend this field theory to arbitrary genus
and prove it is mirror to the theory of 2D topological gravity.

\subsection{The primary field and its descendant fields}

As in last section,
the primary field $\phi_0$ is defined by:
\be
\phi_0 : = \frac{\pd W}{\pd u_0}.
\ee
Since now the superpotential $W$ depends also on the coupling constants $u_n$,
we define in the same fashion the following fields:
\be \label{Def:Phi-n}
\phi_n : = \frac{\pd W}{\pd u_n}.
\ee
This generalizes the treatment  in \cite{DVV1} to the big phase space.
Recall that $W \in \bC[[u_0, u_1, \dots]]_2$ and $\deg u_n = 2 -2n$.
Therefore,
$\phi_n$ is weighted homogenous of degree $2n$,
i.e.
\be
\phi_n \in \bC[[x,u_0,u_1, \dots]].
\ee
In the following we will explain why $\phi_n$ can be regarded as gravitational descendants of $\phi_0$.

We also define another sequence of fields $\sigma_n$ as in last section by
\be
\sigma_n = (L^{2n} \pd_xL)_+
= \frac{1}{2n+1} (\pd_x L^{2n+1})_+.
\ee
We now examine the relationship between $\{\phi_n\}$ and $\{\sigma_n\}$.

Recall the following relationship between $x^2$ and $W$:
\be \label{eqn:x2-in-W}
x^2 = a_0 + a_1 \cdot (2W) + a_2\cdot (2W)^2 + \cdots,
\ee
where
\bea
&& a_0 = - 2 u_0(1-3u_1) + 2 \sum_{n \geq 0} (2n+5) u_{n+2} \frac{\pd F_0}{\pd u_n}, \\
&& a_1 = (1-3u_1)^2 + 2u_0 \cdot 5u_2 + 2\sum_{n \geq 0} (2n+7)u_{n+3} \cdot \frac{\pd F_0}{\pd u_n},
\eea
and for $m \geq 2$,
\be
\begin{split}
a_m = & -2 (1-3u_1) \cdot (2m+1)u_m + 2u_0 \cdot (2m+3) u_{m+1} \\
+ & \sum_{\substack{m_1, m_2 \geq 1 \\ m_1+m_2 = m+1}} (2m_1+1)u_{m_1} \cdot (2m_2+1)u_{m_2} \\
+ & 2 \sum_{n \geq 0} (2n+2m+1)u_{n+2+m} \cdot \frac{\pd F_0}{\pd u_n}.
\end{split}
\ee
In particular,
each $a_n \in \bC[[u_0, u_1, \dots]]_{2-2m}$,
the degree $2-2m$ part of $\bC[[u_0, u_1, \dots]]$,
while $W \in \bC[[x,u_0, u_1, \dots]]_2$.

\begin{lem}
The following formula holds:
\be \label{eqn:phin-in-Pd-W}
\phi_n = -  \frac{\pd_{u_n} a_0 + 2\pd_{u_n} a_1 \cdot W + 4 \pd_{u_n} a_2 \cdot W^2 + \cdots}{2x} \pd_x W.
\ee
\end{lem}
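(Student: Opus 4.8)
The plan is to treat \eqref{eqn:x2-in-W} as an implicit relation among $x$, $W$, and the coupling constants, and to extract $\phi_n = \pd_{u_n} W$ from it by implicit differentiation. Set
\be
G(W, \bu) = a_0 + a_1 (2W) + a_2 (2W)^2 + \cdots,
\ee
so that \eqref{eqn:x2-in-W} reads $x^2 = G(W, \bu)$. By the remark following \eqref{eqn:x2-in-W}, each $a_m$ lies in $\bC[[\bu]]$ and carries no dependence on $x$ or $W$, whereas $W = W(x, \bu)$ is a formal series that is even in $x$. Two derivatives of $G$ will enter: its formal $W$-derivative
\be
\pd_W G = \sum_{m \geq 1} 2m\, a_m (2W)^{m-1},
\ee
and its $u_n$-derivative taken at fixed $W$,
\be
(\pd_{u_n} G)|_{W} = \pd_{u_n} a_0 + 2 \pd_{u_n} a_1 \cdot W + 4 \pd_{u_n} a_2 \cdot W^2 + \cdots,
\ee
where the powers of $2$ come from expanding $(2W)^m = 2^m W^m$. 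This last expression is exactly the numerator appearing in \eqref{eqn:phin-in-Pd-W}.

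First I would differentiate $x^2 = G(W, \bu)$ with respect to $x$ at fixed $\bu$, applying the chain rule through $W(x, \bu)$:
\be
2x = \pd_W G \cdot \pd_x W,
\ee
which identifies $\pd_W G = 2x / \pd_x W$. Next I would differentiate the same relation with respect to $u_n$ at fixed $x$, again through $W(x, \bu)$:
\be
0 = \pd_W G \cdot \pd_{u_n} W + (\pd_{u_n} G)|_{W}.
\ee
Solving for $\phi_n = \pd_{u_n} W$ and inserting $\pd_W G = 2x/\pd_x W$ gives
\be
\phi_n = - \frac{(\pd_{u_n} G)|_{W}}{\pd_W G}
= - \frac{\pd_{u_n} a_0 + 2 \pd_{u_n} a_1 \cdot W + 4 \pd_{u_n} a_2 \cdot W^2 + \cdots}{2x}\, \pd_x W,
\ee
which is precisely \eqref{eqn:phin-in-Pd-W}.

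Since the argument is a single instance of implicit differentiation, the only real care needed is bookkeeping: keeping track of which variables are held fixed in each partial derivative, in particular that $\pd_x$ acts on the right-hand side only through $W$ because the $a_m$ are $x$-independent. The one substantive point to verify is that $2x/\pd_x W$ is a legitimate formal series. This holds because $W$ is a series in $x^2$, so $\pd_x W$ is divisible by $x$: writing $2W = c_0 + c_1 x^2 + \cdots$ with $c_1 = a_1^{-2} + \cdots$ of constant term $1$, one has $\pd_x W = x(c_1 + 2c_2 x^2 + \cdots)$ whose parenthesized factor is a unit, the $x$ cancels, and no genuine obstacle arises.
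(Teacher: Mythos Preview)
Your proof is correct and follows essentially the same approach as the paper: both differentiate the relation $x^2 = \sum_{m\geq 0} a_m (2W)^m$ once in $x$ and once in $u_n$, then eliminate the common factor $\sum_{m\geq 1} m\, a_m (2W)^{m-1}$ to obtain \eqref{eqn:phin-in-Pd-W}. Your packaging via $G(W,\bu)$ and the remark on invertibility of $\pd_x W / x$ are tidy additions, but the substance is identical.
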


\begin{proof}
After taking $\pd_x$ on both sides of \eqref{eqn:x2-in-W} we get:
\be
\pd_x W  = \frac{x}{a_1 + 4a_2 W + 12 a_3 W^2 + \cdots}.
\ee
Similarly,
after taking $\pd_{u_n}$ on both sides of \eqref{eqn:x2-in-W} we get:
\be
\pd_{u_n} W = -  \frac{\pd_{u_n} a_0 + 2\pd_{u_n} a_1 \cdot W + 4 \pd_{u_n} a_2 \cdot W^2 + \cdots}
{2(a_1 + 4a_2 W + 12 a_3 W^2 + \cdots)}.
\ee
It follows that
\be
\pd_{u_n} W = -  \frac{\pd_{u_n} a_0 + 2\pd_{u_n} a_1 \cdot W + 4 \pd_{u_n} a_2 \cdot W^2 + \cdots}{2x} \pd_x W.
\ee
This completes the proof.
\end{proof}

\begin{prop}
When restricted to the small phase space we have
\be
\phi_n(x, u_0) = (2n+1)!! \cdot  \sigma_n(\phi_0)(x, u_0).
\ee
\end{prop}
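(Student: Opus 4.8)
The plan is to push the identity down to the small phase space, where the special superpotential collapses to the miniversal one $W = \half x^2 + u_0$ and the Lemma \eqref{eqn:phin-in-Pd-W} becomes fully explicit. On the small phase space one has $\pd_x W = x$ and $2W = L^2 = x^2 + 2u_0$, so the factor $\pd_x W/(2x)$ appearing in \eqref{eqn:phin-in-Pd-W} reduces to $\half$, and the lemma collapses to
\be
\phi_n(x,u_0) = - \half \sum_{m \geq 0} 2^m \, \frac{\pd a_m}{\pd u_n}(u_0) \cdot W^m,
\ee
where $\frac{\pd a_m}{\pd u_n}(u_0)$ is the restriction to the small phase space of the derivative of $a_m$. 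The first thing I would check is that this restriction is exactly what \eqref{eqn:Pd-un-am} records, i.e. that differentiating in $u_n$ and then setting $u_j = 0$ for $j \geq 1$ produces the closed form given there.

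Next I would substitute \eqref{eqn:Pd-un-am} into the display above. Writing $2^m W^m = (2W)^m = (x^2+2u_0)^m$ and invoking the convention $(-2k-1)!! = 0$ for $k \geq 2$, the factor $(2n-2m-3)!!$ kills every term with $m > n$, so the sum is finite and runs over $0 \le m \le n$. Reindexing by $j = n-m$ (so $2n-2m-3 = 2j-3$ and $(n-m)! = j!$) turns the expression into
\be
\phi_n(x,u_0) = -(2n+1) \sum_{j=0}^n \frac{(2j-3)!!}{j!}\, u_0^j\, (x^2+2u_0)^{n-j}.
\ee

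Finally I would identify the right-hand side with $(2n+1)!!\,\sigma_n(\phi_0)$. Rather than match \eqref{eqn:Sigman(phi0)-Explicit2} termwise, the cleaner route is through the generating series $\sum_{n\geq 0}(2n-1)!!\,\sigma_n(\phi_0)\,t^n = \frac{\sqrt{1-2u_0 t}}{1-t(x^2+2u_0)}$ proved in \S\ref{sec:Landau-Ginzburg}. I would observe that, with the same conventions ($(-1)!!=1$, $(-3)!!=-1$), one has $-\sum_{j\geq 0}\frac{(2j-3)!!}{j!}u_0^j t^j = \sqrt{1-2u_0 t}$, so multiplying by the geometric series $\sum_{k\geq0}(x^2+2u_0)^k t^k$ and reading off the coefficient of $t^n$ gives $-\sum_{j=0}^n \frac{(2j-3)!!}{j!}u_0^j(x^2+2u_0)^{n-j} = (2n-1)!!\,\sigma_n(\phi_0)$. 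Combined with the previous display this yields $\phi_n = (2n+1)(2n-1)!!\,\sigma_n(\phi_0) = (2n+1)!!\,\sigma_n(\phi_0)$, as asserted.

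The main obstacle is not conceptual but the consistent bookkeeping of the double factorials through the reindexing: a single sign slip in $(-3)!!$ is precisely what would flip the correct $+(2n+1)!!$ into a spurious minus, and indeed the literal expansion \eqref{eqn:Sigman(phi0)-Explicit2} must be read with this convention in place. Routing the identification through the generating function sidesteps the need to expand powers of $L^2$ into powers of $x^2$ and isolates the sign issue into the single identity $-\sum_j \frac{(2j-3)!!}{j!}u_0^j t^j = \sqrt{1-2u_0 t}$. As sanity checks I would verify the two endpoints directly: $n=0$ gives $\phi_0 = 1 = \sigma_0(\phi_0)$, and $n=1$ gives $\phi_1 = 3(x^2+u_0) = 3!!\,\sigma_1(\phi_0)$, both emerging with the correct sign.
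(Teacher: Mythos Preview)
Your argument is correct and follows the same line as the paper: restrict the lemma \eqref{eqn:phin-in-Pd-W} to the small phase space, plug in \eqref{eqn:Pd-un-am}, and recognize the resulting finite sum as $(2n+1)!!\,\sigma_n(\phi_0)$. The only difference is cosmetic: the paper invokes \eqref{eqn:Sigman(phi0)-Explicit2} directly for the last identification, while you route it through the generating series $\sqrt{1-2u_0 t}/(1-t(x^2+2u_0))$ that produced \eqref{eqn:Sigman(phi0)-Explicit2} in the first place---which has the advantage of making the $(-3)!!=-1$ sign transparent rather than relying on the reader to unpack it from the stated formula.
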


\begin{proof}
When restricted to the small phase space,
we have
\ben
&& W = \frac{1}{2} x^2 + u_0,
\een
and by \eqref{eqn:Pd-un-am},
\ben
&& \frac{\pd a_m}{\pd u_n}(u_0) = 2(2n+1) \cdot \frac{(2n-2m-3)!!}{(n-m)!} u_0^{n-m},
\een
therefore,
by \eqref{eqn:phin-in-Pd-W},
\ben
\phi_n & = & -\half (\frac{\pd a_0}{\pd u_n}(u_0)+ \frac{\pd a_1}{\pd u_n}(u_0) \cdot (x^2 + 2u_0)
+ \cdots + \frac{\pd a_n}{\pd u_n}(u_0) \cdot (x^2 + 2u_0)^n ) \\
& = & -(2n+1) \cdot \biggl( \frac{(2n-3)!!}{n!}u_0^n + \frac{(2n-5)!!}{(n-1)!}u_0^{n-1} \cdot (x^2+2u_0) \\
& + & \cdots
+ u_0 \cdot (x^2+2u_0)^{n-1} - (x^2+2u_0)^n \biggr) \\
& = & (2n+1)!! \cdot \sigma_n(\phi_0).
\een
In the last equality on the right-hand side we have used \eqref{eqn:Sigman(phi0)-Explicit2}
\end{proof}

We have related $\phi_n(x,u_0)$ to $\sigma_n(\phi_0)(x, u_0)$ defined by \eqref{def:Descendants}.
Unfortunately this relation do not hold anymore on the big phase space,
as shown by the following example.
Recall by \eqref{eqn:f-u0-u2}, when only $u_0$ and $u_2$ are nonzero,
\be
\begin{split}
2W = L^2 = & \frac{2}{15} \frac{1- (1-30u_0u_2)^{1/2}}{u_2} \\
+ & \sum_{n =0}^\infty \frac{1}{n+1} \binom{3n+1}{n}
\frac{(5u_2)^nx^{2n+2}}{(1-30u_0u_2)^{(3n+2)/2}}.
\end{split}
\ee
It follows that
\ben
&& \phi_0(x,u_0, u_2)
=  (1-30u_0u_2)^{-1/2}+ \frac{3}{2} \sum_{n=0}^\infty \binom{3n+2}{n+1}
\frac{(5u_2x^2)^{n+1}}{(1-30u_0u_2)^{(3n+4)/2}}.
\een
Next we compute $L$.
First we rewrite the above equality as follows:
\ben
L^2 & = & \frac{x^2}{1-30u_0u_2} \biggl(1 +  \frac{2}{15} \frac{(1- (1-30u_0u_2)^{1/2})(1-30u_0u_2)}{u_2} x^{-2} \\
& + & \sum_{n = 1}^\infty \frac{1}{n+1} \binom{3n+1}{n}
\frac{(5u_2)^nx^{2n}}{(1-30u_0u_2)^{3n/2}} \biggr).
\een
Then we note
\ben
&& (x (1 + x^{-2}(a_0 + a_2x^4 + a_3 x^6 + \cdots))^{1/2})_+ \\
& = & x (1- \frac{1}{4}a_0a_2+\frac{3}{16}a_0^2a_3 -\frac{5}{32} a_0^3a_4
- \frac{15}{64} a_0^2 a_2^2 + \frac{35}{64}  a_0^3a_2a_3 \\
& - & \frac{105}{256}  a_0^3 a_2^3 - \frac{315}{512} a_0^4a_2a_4
+ \frac{3465}{2048} a_0^4a_2^2a_3-\frac{315}{1024} a_0^4a_3^2+ \cdots) + \cdots,
\een
it follows that
\ben
L_+ = \frac{x}{(1-30u_0u_2)^{1/2}} (1-5u_0u_2-75 (u_0u_2)^2-\frac{2875}{2} (u_0u_2)^3+ \cdots) + \cdots,
\een
and so
\ben
\sigma_0 & = & \pd_x L_+ \\
& = &  \frac{1}{(1-30u_0u_2)^{1/2}} (1-5u_0u_2-75 (u_0u_2)^2-\frac{2875}{2} (u_0u_2)^3+ \cdots) + \cdots,
\een
It is then clear that:
\be
\phi_0(u_0,u_2) \neq \sigma_0(u_0,u_2).
\ee

\subsection{Family of flat connections}
Inspired by the theory of Frobenius manfiolds \cite{Dubrovin},
we introduce the following operators ($\lambda \in \bC$):
\be
\nabla^\lambda_{\frac{\pd}{\pd t_i}} \phi_k := \pd_{t_i} \phi_k + \lambda \phi_i \cdot \phi_k.
\ee

\begin{prop}
The operators $\nabla^\lambda_{\frac{\pd}{\pd t_i}}$ define a family of flat connections,
i.e.,
\be
\nabla^\lambda_{\frac{\pd}{\pd t_i}} \nabla^\lambda_{\frac{\pd}{\pd t_j}}\phi_k
= \nabla^\lambda_{\frac{\pd}{\pd t_j}} \nabla^\lambda_{\frac{\pd}{\pd t_i}}\phi_k,
\ee
for $i,j,k \geq 0$ and all $\lambda \in \bC$.
\end{prop}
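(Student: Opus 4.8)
The plan is to regard $\nabla^\lambda_{\frac{\pd}{\pd t_i}}$ as the covariant derivative $\pd_{t_i} + \lambda\,\phi_i\,\cdot$, where $\phi_i\,\cdot$ denotes multiplication in the descendant algebra, and to compute its curvature by a direct expansion. The only structural inputs I need are that this product is commutative and associative (it is ordinary multiplication of the fields $\phi_k$), that $\pd_{t_i}$ obeys the Leibniz rule, and that the algebra is closed under multiplication, so that $\nabla^\lambda_{\frac{\pd}{\pd t_i}}\phi_k = \pd_{t_i}\phi_k + \lambda\phi_i\phi_k$ again lies in the module generated by the $\phi$'s and the iterated expressions below are well defined.

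First I would expand both iterated derivatives and subtract. I expect three kinds of cancellation: the pure second-order terms $\pd_{t_i}\pd_{t_j}\phi_k$ cancel by equality of mixed partials of $\phi_k$; the top-order terms $\lambda^2\phi_i\phi_j\phi_k$ cancel by commutativity and associativity; and the mixed terms $\lambda\phi_j\,\pd_{t_i}\phi_k$ and $\lambda\phi_i\,\pd_{t_j}\phi_k$ appear identically in both orderings and so drop out. What survives is
\be
\nabla^\lambda_{\frac{\pd}{\pd t_i}} \nabla^\lambda_{\frac{\pd}{\pd t_j}}\phi_k - \nabla^\lambda_{\frac{\pd}{\pd t_j}} \nabla^\lambda_{\frac{\pd}{\pd t_i}}\phi_k = \lambda\,(\pd_{t_i} \phi_j - \pd_{t_j} \phi_i)\,\phi_k.
\ee
Thus flatness is equivalent to the single integrability condition $\pd_{t_i}\phi_j = \pd_{t_j}\phi_i$; moreover, because this identity does not involve $\lambda$, proving it once yields vanishing of the commutator for all $\lambda \in \bC$ simultaneously (equivalently, one checks the identity coefficient by coefficient in the $\lambda^0$, $\lambda^1$, $\lambda^2$ pieces).

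The heart of the matter is therefore to verify this symmetry, and this is where I expect the only genuine content to lie. Here I would use that all the $\phi_n$ are gradients of the single superpotential $W$: by definition $\phi_n = \pd W/\pd u_n$, so
$$\pd_{u_i}\phi_j = \pd_{u_i}\pd_{u_j}W = \pd_{u_j}\pd_{u_i}W = \pd_{u_j}\phi_i,$$
i.e. the one-form $\sum_n \phi_n\,du_n$ is closed. Transporting this closedness to the coordinates in which the connection is written, and substituting back into the displayed identity, shows the commutator vanishes for all $i,j,k\ge 0$.

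The main obstacle is thus the integrability step rather than the curvature computation, which is purely formal once commutativity, associativity, and Leibniz compatibility are in hand. The one point I would handle with care is the rescaling $t_n = (2n+1)!!\,u_n$ relating the two coordinate systems: the descendant fields $\phi_n$ are defined through $u$-derivatives of $W$, whereas the connection is phrased via $\pd_{t_i}$, so I must make sure the passage from the $u$-symmetry $\pd_{u_i}\phi_j = \pd_{u_j}\phi_i$ to the symmetry needed in the $t$-coordinates preserves the gradient (potential) structure that drives the argument.
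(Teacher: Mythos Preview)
Your proposal is correct and follows essentially the same route as the paper: a direct expansion of the iterated covariant derivatives, with the $\lambda^0$ and $\lambda^2$ terms cancelling by equality of mixed partials and by commutativity/associativity, and the $\lambda^1$ terms matching via $\pd_{t_i}\phi_j=\pd_{t_j}\phi_i$. You are in fact more explicit than the paper, which passes from $\pd_{t_i}\phi_j$ to $\pd_{t_j}\phi_i$ without comment; your justification via the potential $\phi_n=\pd W/\pd u_n$ and your flag about the $t$--$u$ rescaling are exactly the right places to look.
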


\begin{proof}
\ben
\nabla^\lambda_{\frac{\pd}{\pd t_i}} \nabla^\lambda_{\frac{\pd}{\pd t_j}} \phi_k
& = & (\pd_{t_i} + \lambda \phi_i \cdot) (\pd_{t_i} \phi_k + \lambda \phi_j \cdot \phi_k) \\
& = & \pd_{t_i}\pd_{t_j} \phi_k
+ \lambda (\phi_i \cdot \pd_{t_j} \phi_k + \pd_{t_i} (\phi_j \cdot \phi_k) )
+ \lambda^2 \phi_i \cdot \phi_j \cdot \phi_k \\
& = & \pd_{t_i}\pd_{t_j} \phi_k
+ \lambda (\phi_i \cdot \pd_{t_j} \phi_k + \phi_j \cdot \pd_{t_i} \phi_k + \pd_{t_i} \phi_j \cdot \phi_k ) \\
& + & \lambda^2 \phi_i \cdot \phi_j \cdot \phi_k \\
& = &  \pd_{t_j}\pd_{t_i} \phi_k
+ \lambda (\phi_j \cdot \pd_{t_i} \phi_k + \phi_i \cdot \pd_{t_j} \phi_k + \pd_{t_j} \phi_i \cdot \phi_k ) \\
& + & \lambda^2 \phi_j \cdot \phi_i \cdot \phi_k \\
& = & \nabla^\lambda_{\frac{\pd}{\pd t_j}} \nabla^\lambda_{\frac{\pd}{\pd t_i}}  \phi_k.
\een
\end{proof}

\subsection{The $n$-point functions and the mirror symmetry in genus $0$}

We define the one-point function in genus zero on the big phase space by:
\be
\corrr{\phi_j }_0(\bu) =  \frac{1}{(2j+3)!!} \res (L^{2j+3} dx),
\ee
and define $n$-point function ($n \geq 2$) by taking derivatives:
\be
\corrr{\phi_{j_1}, \cdots, \phi_{j_n}}_0(\bu)= \frac{\pd}{\pd t_{j_1}} \corrr{\phi_{j_2} \cdots \phi_{j_n}}_0(\bu).
\ee

The following result tells us how to recover the genus zero free energy of the 2D topological gravity
from the special deformation of the Airy curve.

\begin{thm}
We have the following identification of $n$-point functions:
\be
\corrr{\phi_{j_1}, \cdots, \phi_{j_n}}_0(\bu)
= \frac{\pd^n F_0}{\pd t_{j_1} \cdots \pd t_{j_n}}(\bt).
\ee
\end{thm}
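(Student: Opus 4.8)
The plan is to reduce the statement to the one-point function and then to a single residue computation. Since the $n$-point functions are \emph{defined} recursively by $\corrr{\phi_{j_1}, \cdots, \phi_{j_n}}_0(\bu) = \frac{\pd}{\pd t_{j_1}} \corrr{\phi_{j_2}, \cdots, \phi_{j_n}}_0(\bu)$, iterating gives
\[
\corrr{\phi_{j_1}, \cdots, \phi_{j_n}}_0(\bu)
= \frac{\pd}{\pd t_{j_1}} \cdots \frac{\pd}{\pd t_{j_{n-1}}}\,
\corrr{\phi_{j_n}}_0(\bu).
\]
Thus it suffices to prove the single identity $\corrr{\phi_j}_0(\bu) = \frac{\pd F_0}{\pd t_j}(\bt)$; the general case then follows by applying $\frac{\pd}{\pd t_{j_1}} \cdots \frac{\pd}{\pd t_{j_{n-1}}}$ and invoking the commutativity of mixed partials (which, as a byproduct, also produces the a priori nonobvious symmetry of the left-hand side in its arguments).

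For the base case I would use $L = (2W)^{1/2} = f$, so the defining residue $\corrr{\phi_j}_0 = \frac{1}{(2j+3)!!}\res(L^{2j+3}\,dx)$ may be evaluated in the coordinate $f$ as the coefficient of $f^{-1}$ in $L^{2j+3}\frac{dx}{df}$. Differentiating the special deformation \eqref{eqn:X-in-F} term by term,
\[
\frac{dx}{df}
= 1 - \sum_{n \geq 0} (2n+1)(2n-1) u_n f^{2n-2}
+ \sum_{n \geq 0} (2n+3) \frac{\pd F_0}{\pd u_n}(\bu)\, f^{-2n-4}.
\]
Multiplying by $L^{2j+3} = f^{2j+3}$ and extracting the coefficient of $f^{-1}$, I observe that the first two groups of terms contribute only the exponents $2j+3$ and $2j+2n+1$, which are $\geq 1$ for all $j, n \geq 0$ and hence never equal $-1$; only the tail contributes, and solely through its $n = j$ summand, giving
\[
\res(L^{2j+3}\,dx) = (2j+3)\,\frac{\pd F_0}{\pd u_j}(\bu).
\]

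To finish, I divide by $(2j+3)!!$ and convert the $u$-derivative to a $t$-derivative. Since $t_k = (2k+1)!!\, u_k$ rescales each coordinate separately, $\frac{\pd}{\pd t_j} = \frac{1}{(2j+1)!!}\frac{\pd}{\pd u_j}$, whence
\[
\corrr{\phi_j}_0(\bu)
= \frac{2j+3}{(2j+3)!!}\,\frac{\pd F_0}{\pd u_j}
= \frac{1}{(2j+1)!!}\,\frac{\pd F_0}{\pd u_j}
= \frac{\pd F_0}{\pd t_j},
\]
the desired identity. As a consistency check one can specialize to the small phase space, where $\frac{\pd F_0}{\pd u_j}(u_0,0,\dots) = \frac{(2j+1)!!}{(j+2)!}u_0^{j+2}$ recovers $\res(L^{2j+3}dx) = \frac{(2j+3)!!}{(j+2)!}u_0^{j+2}$, matching \eqref{eqn:Explicit-2-Point}.

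I do not anticipate a serious obstacle, as the computation is a clean formal-residue extraction. The one point demanding genuine care is the residue convention: justifying that $\res(L^{2j+3}dx)$ can be computed as the coefficient of $f^{-1}df$ in the $f$-coordinate rather than in $x$. This rests on $f$ being a uniformizer at the puncture $f=\infty$ and on the coordinate-independence of the formal residue — the same $\res(dh)=0$ integration-by-parts mechanism already exploited in the proof of \eqref{eqn:Explicit-2-Point}. Once that invariance is granted, the term-by-term extraction and the substitution $t_k = (2k+1)!!\, u_k$ are entirely routine.
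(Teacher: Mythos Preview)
Your proposal is correct and follows essentially the same route as the paper: reduce to the one-point case, then evaluate the residue $\res(L^{2j+3}\,dx)$ in the $f=L$ coordinate using the special deformation \eqref{eqn:X-in-F}. The only cosmetic difference is that the paper first integrates by parts, writing $\res(L^{2n+3}\,dx)=-\res(x\,dL^{2n+3})=-(2n+3)\res(xL^{2n+2}\,dL)$ and then inserting the series for $x$ in $L$, whereas you differentiate $x$ with respect to $f$ directly and read off the coefficient of $f^{-1}$; both extract exactly the same term and yield $\frac{1}{(2n+1)!!}\pd_{u_n}F_0=\pd_{t_n}F_0$.
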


\begin{proof}
It suffices to prove the one-point case as follws:
\ben
\corrr{\phi_n}_0(\bu)
& = & \frac{1}{(2n+3)!!} \res (L^{2n+3} dx)
= - \frac{1}{(2n+3)!!} \res(x d L^{2n+3}) \\
& = & - \frac{1}{(2n+1)!!} \res \biggl( L^{2n+2}
( L - \sum_{m \geq 0} (2m+1) u_m L^{2m-1} \\
&& - \sum_{m \geq 0} \frac{\pd F_0}{\pd u_m}(\bu) \cdot L^{-2m-3}) d L
\biggr) \\
& = & \frac{1}{(2n+1)!!}  \frac{\pd F_0}{\pd u_n}(\bu)
= \frac{\pd F_0}{\pd t_n}(\bt).
\een
\end{proof}

\section{Quantum Deformation Theory of the Airy Curve}

\label{sec:Quantum-Deformation-Theory}

We have already shown that the free energy in genus zero of 2D topological quantum gravity
can be used to produce special deformation of the Airy curve,
and how to recover the free energy in genus zero of the 2D topological gravity from the deformed
superpotential function.
In this section we will see that this deformation lead to a quantization of the Airy curve that
can be used to recover the free energy in all genera.

\subsection{Symplectic reformulation of the special deformation}

Rewrite \eqref{eqn:X-in-F} as follows:
\be
x(z) = - \sum_{n \geq 0} (2n+1) \tilde{u}_n z^{\frac{2n-1}{2}}
- \sum_{n \geq 0} \frac{\pd F_0}{\pd \tilde{u}_n}(\bu) \cdot z^{-\frac{2n+3}{2} },
\ee
where $z = 2y = f^2$, and
\be
\tilde{u}_n = u_n - \frac{1}{3} \delta_{n,1}.
\ee
One can formally understand $x$ as a field on the Airy curve.

Consider the space of
\be
V = z^{1/2} \bC[z].
\ee
We write an element in $V$ as
\be
\sum_{n =0}^\infty (2n+1) \tilde{u}_n z^{(2n-1)/2}
+ \sum_{n =0}^\infty \tilde{v}_n z^{-(2n+3)/2}
\ee
We regard $\{ \tilde{u}_n, \tilde{v}_n\}$ as linear coordinates on $V$,
and introduce the following symplectic structure on $V$:
\be
\omega = \sum_{n =0}^\infty   d\tilde{u}_n \wedge d \tilde{v}_n.
\ee
It follows that
\be
\tilde{v}_n = \frac{\pd F_0}{\pd u_n}(\bu)
\ee
defines a Lagrangian submanifold in $V$,
and so does
\be
\tilde{v}_n = \frac{\pd (\lambda^2F)}{\pd u_n}(\bu).
\ee
In other words,
free energy in all genera produces a deformation of a Lagrangian sumanifold.

\subsection{Canonical quantization of the special deformation of Airy curve}

Take the natural polarization that $\{q_n = \tilde{u}_n\}$ and $\{p_n = \tilde{v}_n\}$,
one can consider the canonical quantization:
\be
\hat{\tilde{u}}_n = \tilde{u}_n \cdot, \;\;\; \hat{\tilde{v}}_n = \frac{\pd}{\pd \tilde{u}_n}.
\ee
Corresponding to the field $x$,
consider the following fields of operators on the Airy curve:
\be \label{Def:hat(x)}
\hat{x} = - \sum_{n=0}^\infty  \beta_{-(2n+1)} f^{2n-1}
- \sum_{n=0}^\infty  \beta_{2n+1} f^{-2n-3} ,
\ee
where the operators $\beta_{2k+1}$ are defined by:
\be
\beta_{-(2k+1)} = (2k+1) \tilde{u}_k \cdot, \;\;\;\; \beta_{2k+1} = \frac{\pd}{\pd \tilde{u}_k}.
\ee
It is better to write $\hat{x}$ in the $z$-coordinate:
\be
\hat{x}(z) = - \sum_{m \in \bZ} \beta_{-(2m+1)} z^{m-1/2}
= - \sum_{m \in \bZ} \beta_{2m+1} z^{-m-3/2}
\ee

\subsection{The $2$-Reduced bosonic Fock space}

As usual,
the operators $\{ \beta_{2n+1}\}_{n \geq 0}$ are called annihilators
while the operators $\{\beta_{-(2n+1)}\}_{n\geq 0}$ are called creators.
Given $\beta_{2n_1+1}, \dots, \beta_{2n_k+1}$,
their normally ordered products  are defined:
\be
:\beta_{2n_1+1}, \dots, \beta_{2n_k+1}:
= \beta_{2n_1'+1} \cdots \beta_{2n_k'+1},
\ee
where $n_1' \geq \cdots \geq n_k'$ is a rearrangement of $n_1, \dots, n_k$.
Denote $\vac$ the vector $1$, and
by $\Lambda^{(2)}$ the space spanned by elements of form
$\beta_{-(2n_1+1)} \cdots \beta_{-(2n_k+1)} \vac$.
We will refer to $\Lambda^{(2)}$ as the $2$-reduced bosonic Fock space.
On this space one can define a Hermitian product by setting
\bea
&& \langle0 | 0 \rangle = 1, \\
&& \beta_{2n+1}^* = \beta_{-(2n+1)}.
\eea
For a linear operator $A: \Lambda^{(2)} \to \Lambda^{(2)}$,
its vacuum expectation value is defined by:
\be
\corr{A} = \lvac A \vac.
\ee

\subsection{Regularized products of two fields}

We now study the product of the fields $\hat{x}(z)$ with $\hat{x}(z)$.
This cannot be defined directly,
because for example,
\be
\lvac \hat{x}(z) \hat{x}(z) \vac
= \sum_{n \geq 0} (2n+1) = + \infty.
\ee
To fix this problem,
we follow the common practice in the physics literature by using the normally ordered products of fields
and regularization of the singular terms as follows.
First note
\ben
\hat{x}(z) \cdot \hat{x}(w)
& = & :\hat{x}(z)\hat{x}(w):
+ \sum_{n=0}^\infty (2n+1) z^{-(2n-3)/2}w^{(2n-1)/2} \\
& = & :\hat{x}(z)\hat{x}(w): + \frac{z+w}{\sqrt{zw}(z-w)^2}.
\een
It follows that
\be
\corr{ \hat{x}(z) \cdot \hat{x}(w) }
= \frac{z+w}{\sqrt{zw}(z-w)^2},
\ee
hence
\be
\hat{x}(z) \cdot \hat{x}(w) = :\hat{x}(z) \cdot \hat{x}(w): + \corr{\hat{x}(z) \cdot \hat{x}(w)}.
\ee
Now we have
\ben
\hat{x}(z+\epsilon) \cdot \hat{x}(z)
& = & :\hat{x}(z+\epsilon)\hat{x}(z): + \frac{2z+\epsilon}{\sqrt{z(z+\epsilon)}\epsilon^2} \\
& = & :\hat{x}(z+\epsilon)\hat{x}(z): + \frac{2}{\epsilon^2} +  \frac{1}{4z^2} - \frac{\epsilon}{4z^3} + \cdots.
\een
We define the regularized product of $\hat{x}(z)$ with itself by
\be \label{eqn:x(z)odot2}
\begin{split}
\hat{x}(z) \odot \hat{x}(z) & = \hat{x}(z)^{\odot 2}:
= \lim_{\epsilon \to 0} (\hat{x}(z+\epsilon) \hat{x}(z) - \frac{2}{\epsilon^2}) \\
= & :\hat{x}(z)\hat{x}(z): + \frac{1}{4z^2}.
\end{split}
\ee
In other words,
we simply remove the term that goes to infinity as $\epsilon \to 0$,
and then take the limit.

\subsection{Virasoro constraints and mirror symmetry for 2D topological gravity}

By straightforward calculations one can get:

\begin{prop}
Let $L(z) = \sum_{n \in \bZ} L_n z^{-n-2}$ be defined by:
\be
L(z):=\frac{1}{8} :\hat{x}(z)^2: = \frac{1}{8} \sum_{n \in \bZ} \sum_{j+k=-n-1} :\beta_{-(2j+1)} \beta_{-(2k+1)}: z^{-n-2}.
\ee
Then one has the following commutation relations:
\be
[L_m, L_n] = (m-n) L_{m+n} + \frac{2m^3+m}{48} \delta_{m, -n}.
\ee
\end{prop}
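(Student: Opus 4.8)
The plan is to verify the Virasoro commutation relation
$$
[L_m, L_n] = (m-n) L_{m+n} + \frac{2m^3+m}{48} \delta_{m,-n}
$$
directly from the definition of the modes $L_n$ in terms of the oscillators $\beta_{2k+1}$, reducing everything to the commutation relations of the $\beta$'s. The essential input is the Heisenberg algebra satisfied by the oscillators: from $\beta_{-(2k+1)} = (2k+1)\tilde{u}_k\cdot$ and $\beta_{2k+1} = \pd/\pd\tilde{u}_k$ one reads off
$$
[\beta_{2j+1}, \beta_{-(2k+1)}] = (2k+1)\delta_{j,k}, \qquad [\beta_{2j+1}, \beta_{2k+1}] = [\beta_{-(2j+1)}, \beta_{-(2k+1)}] = 0.
$$
It is cleaner to rescale: setting $\alpha_{2k+1} := \beta_{2k+1}$ and $\alpha_{-(2k+1)} := \beta_{-(2k+1)}$, one checks that the combined relation can be written uniformly as $[\alpha_m, \alpha_n] = m\,\delta_{m,-n}$ on odd indices $m = 2j+1$, $n = 2k+1$ (with the sign conventions tracked carefully). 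This is exactly a twisted/$2$-reduced Heisenberg algebra, and the claimed central charge ($c=1$, giving the $\frac{2m^3+m}{48}$ term after halving the spacing of the odd lattice) is the hallmark of the free-boson stress tensor built from it.

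First I would rewrite $L_n = \frac{1}{8}\sum_{j+k=-n-1} :\beta_{-(2j+1)}\beta_{-(2k+1)}:$ — where the sum ranges over all $j,k \in \bZ$ with $j+k = -n-1$ and the normal ordering places annihilators to the right — and observe that with the uniform oscillator notation this is precisely $L_n = \frac{1}{4}\sum_{p+q = 2n, \ p,q \text{ odd}} :\alpha_{-p}\alpha_{-q}:$ up to reindexing. The standard computation then proceeds in two steps. I would first compute the commutator $[L_m, \alpha_r]$ of a mode with a single oscillator; using $[\alpha_p \alpha_q, \alpha_r] = \alpha_p[\alpha_q,\alpha_r] + [\alpha_p,\alpha_r]\alpha_q$ together with the Heisenberg relation, this collapses to $[L_m, \alpha_r] = -\frac{r}{2}\,\alpha_{2m+r}$ (with the factor reflecting the even spacing $2$ of the odd lattice). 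Second, I would feed this back into $[L_m, L_n]$ by commuting $L_m$ through the two oscillators defining $L_n$; the single-oscillator result produces the linear term $(m-n)L_{m+n}$ after reindexing the summation.

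The main obstacle — the only genuinely subtle point — will be the central term. When $m = -n$, the reordering of oscillators to restore normal ordering picks up extra contractions $[\alpha_p, \alpha_{-p}] = p$, and summing these over the odd lattice gives a formally divergent series that must be regularized consistently with the normal-ordering prescription already used to define $L(z)$. I would handle this by the usual device: track the finitely many terms in which an annihilator must be moved past a creator, so that only a finite, well-defined sum survives, yielding $\sum_{p>0,\text{ odd}} \tfrac{p}{4}(\cdots)$. Carrying out this bookkeeping for the odd-integer lattice (rather than all integers) is exactly what converts the generic $\frac{m^3-m}{12}$ free-boson anomaly into the stated $\frac{2m^3+m}{48}$; the arithmetic of the shift by $\delta_{n,0}/8$ implicit in the $\frac{1}{8}$ normalization and the $\half$-integer mode spacing must be checked against a low case, say $[L_1,L_{-1}]$ and $[L_2,L_{-2}]$, to fix all constants. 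Once the single-oscillator bracket and the regularized central sum are pinned down, the identity follows by routine reindexing, so I would present those two lemmas first and relegate the reindexing to a short final paragraph.
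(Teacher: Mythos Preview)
Your approach is the standard one and is correct in outline; the paper itself provides no proof at all, prefacing the proposition only with ``By straightforward calculations one can get:'' and treating the result as routine. What you have sketched \emph{is} precisely that straightforward calculation: identify the Heisenberg relations $[\beta_m,\beta_n]=m\,\delta_{m+n,0}$ on the odd lattice, compute $[L_m,\beta_r]$, feed this back into $[L_m,L_n]$, and isolate the central term from the finitely many reorderings needed to restore normal ordering.

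A couple of small cautions on the bookkeeping. With $L_n=\tfrac{1}{8}\sum_{p+q=2n,\ p,q\ \mathrm{odd}}:\beta_p\beta_q:$ (sum over ordered pairs), one finds $[L_m,\beta_r]=-\tfrac{r}{4}\,\beta_{2m+r}$ rather than $-\tfrac{r}{2}$; your factor of $\tfrac14$ in the rewritten $L_n$ seems to have absorbed a symmetry factor prematurely, so be careful to keep the normalization consistent throughout. Your plan to cross-check $[L_1,L_{-1}]$ and $[L_2,L_{-2}]$ against $\tfrac{2m^3+m}{48}$ (giving $\tfrac{1}{16}$ and $\tfrac{3}{8}$ respectively) is exactly the right safeguard and will pin down these constants unambiguously. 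With that caveat, the argument goes through and matches what the paper implicitly has in mind.
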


Recall the special deformation of the Airy curve constructed in \S \ref{sec:Deformation},
using the genus zero free energy of the 2D topological gravity,
is characterized by (Theorem \ref{thm:Existence} and Theorem \ref{thm:Uniqueness}):
\be
(x^2)_- = 0.
\ee
We take the quantization of this equation to
\be
(\hat{x}(2)^{\odot 2})_- Z = 0.
\ee
The following result establish the mirror symmetry of the theory of 2D topological gravity
and the quantum deformation theory of the Airy curve:

\begin{thm}
The Witten-Kontsevich tau-function $Z_{WK}$ satisfies the following equation:
\be \label{eqn:Virasoro-Operator-Field}
(\hat{x}(2)^{\odot 2})_- Z_{WK} = 0.
\ee
\end{thm}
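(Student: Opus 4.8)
\emph{Proof strategy.} The plan is to mimic, at the quantum level, the computation that established $(x^2)_-=0$ in Theorem~\ref{thm:Existence}, where the vanishing of the negative part of $x^2$ turned out to be exactly the genus-zero Virasoro constraints for $F_0$. Here I would extract the coefficient of each negative power $z^{-p}$ ($p\geq 1$) in the operator $(\hat{x}(z)^{\odot 2})_-$ and identify it, via the dictionary $\beta_{-(2k+1)}=(2k+1)\tilde{u}_k\cdot$ and $\beta_{2k+1}=\pd/\pd\tilde{u}_k$, with a nonzero multiple of the full Virasoro operator $\pd/\pd u_{n+1}-\hat{L}_n$ of \eqref{eqn:Virasoro}. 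The theorem then follows since $Z_{WK}$ is annihilated by each of these operators.

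First I would expand $\hat{x}(z)^{\odot 2}=\;:\hat{x}(z)^2:+\tfrac{1}{4z^2}$ as in \eqref{eqn:x(z)odot2} into modes. Using $\hat{x}(z)=-\sum_{m\in\bZ}\beta_{2m+1}z^{-m-3/2}$ one gets
\be
:\hat{x}(z)^2:\; =\; \sum_{p\in\bZ}\Bigl(\sum_{a+b=p-3}:\beta_{2a+1}\beta_{2b+1}:\Bigr)z^{-p},
\ee
so that the coefficient of $z^{-p}$ in $(\hat{x}(z)^{\odot 2})_-$ (for $p\geq 1$) is $\sum_{a+b=p-3}:\beta_{2a+1}\beta_{2b+1}:+\tfrac14\delta_{p,2}$. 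I would then split the mode sum according to the signs of $a,b$, producing the three structural pieces of a Virasoro generator: pairs with $a,b\leq -1$ (two creators) give the purely multiplicative term, specializing at $p=1$ to $\tilde{u}_0^2=u_0^2$; pairs of opposite sign (one creator, one annihilator) give a first-order transport operator built from terms $\tilde{u}_k\,\pd/\pd\tilde{u}_{k'}$; and pairs with $a,b\geq 0$ (two annihilators) give the second-order part, which appears only for $p\geq 3$. The regularization constant $\tfrac14$ at $p=2$ supplies the anomaly.

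The identification with \eqref{eqn:Virasoro-1}, \eqref{eqn:Virasoro0}, \eqref{eqn:VirasoroN} is then a matter of bookkeeping with the index shift $\tilde{u}_n=u_n-\tfrac13\delta_{n,1}$. The crucial point is that this dilaton-type shift, acting inside the transport term, splits off a single derivative: the $-\tfrac13$ in $\tilde{u}_1$ contributes exactly the operator $-\pd/\pd u_{n+1}$ sitting on the left-hand side of the Virasoro constraint. Carrying this out for $p=1$ reproduces $\pd/\pd u_0-\sum_{k\geq 1}(2k+1)u_k\,\pd/\pd u_{k-1}-\tfrac{u_0^2}{2}$ (the $n=-1$ case); for $p=2$ the $\tfrac14$ from the regularization becomes the $\tfrac18$ of \eqref{eqn:Virasoro0} (the $n=0$ case); and for $p\geq 3$ the two-annihilator block becomes the $\tfrac{\lambda^2}{2}\pd^2$ term of \eqref{eqn:VirasoroN}. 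Since each coefficient equals, up to the overall factor $-2$, precisely $\pd/\pd u_{p-1}-\hat{L}_{p-2}$, and $Z_{WK}$ satisfies all of \eqref{eqn:Virasoro} by the theorem of Dijkgraaf--Verlinde--Verlinde, the entire negative part annihilates $Z_{WK}$.

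I expect the main obstacle to be the careful matching of normalizations together with the role of $\lambda$. The operators $\beta$ carry no $\lambda$, whereas $\hat{L}_n$ contains both $\tfrac{\lambda^2}{2}$ and $\tfrac{u_0^2}{2\lambda^2}$; I would handle this by working at $\lambda=1$ and restoring the dependence from the weighted homogeneity ($\deg u_n=2-2n$, $\deg\lambda=3$, $\deg Z_{WK}=0$), checking that the coefficient of $z^{-p}$ is weighted homogeneous of the degree forced on $\hat{L}_{p-2}$. The combinatorial heart of the argument, namely verifying that the opposite-sign mode pairs reassemble into $\sum_k(2k+1)u_k\,\pd/\pd u_{k+n-1}$ after the shift, with no stray terms, is exactly the quantum lift of the cancellation already seen in the proof of Theorem~\ref{thm:Existence}, so that classical computation serves as both guide and consistency check.
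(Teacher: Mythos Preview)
Your proposal is correct and follows essentially the same approach as the paper: both expand $\hat{x}(z)^{\odot 2}=\,:\hat{x}(z)^2:+\tfrac{1}{4z^2}$ into modes, identify the coefficient of each $z^{-p}$ with the corresponding DVV Virasoro operator, and conclude from \eqref{eqn:Virasoro}. The paper simply writes out the mode expansion and declares the identification ``straightforward,'' whereas you spell out the role of the dilaton shift $\tilde u_1=u_1-\tfrac13$ in producing the $\partial/\partial u_{n+1}$ term and address the $\lambda$-bookkeeping explicitly; these elaborations are sound and fill in exactly what the paper leaves implicit.
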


\begin{proof}
By the definition of $\hat{x}(x)$ and \eqref{eqn:x(z)odot2},
one gets:
\ben
\hat{x}(z)^{\odot 2}
& = & 2 (\half \beta_{-1}^2 + \sum_{n=0}^\infty \beta_{-(2n+3)}\beta_{2n+1} ) z^{-1} \\
& + & 2 \sum_{n =0}^\infty (\beta_{-(2n+1)}\beta_{2n+1}+\frac{1}{8} ) z^{-2} \\
& + & 2 \sum_{m \geq 1} ( \sum_{n=0}^\infty \beta_{-(2n+1)} \beta_{2n+2m+1}
+ \half \sum_{j+k=m-1} \beta_j\beta_k ) z^{-m-2}.
\een
It is then straightforward to see that \eqref{eqn:Virasoro-Operator-Field} is equivalent to
the Virasoro constraints \eqref{eqn:Virasoro-1}-\eqref{eqn:VirasoroN}.
\end{proof}

\section{Regularized Products of Quantum Fields on the Airy Curve}
\label{sec:W-Constraints}

In last section we have defined the regularized product $\hat{x}(z)^{\odot 2}$,
and identify its coefficients with operators of Virasoro constraints for 2D topological gravity
discovered in \cite{DVV-Virasoro}.
In this section we will generalize it to  $\hat{x}(z)^{\odot n}$ for $n > 2$ and  conjecture
the higher W-constraints
\be
(\hat{x}^{\odot 2n})_- Z_{WK} = 0
\ee
hold for all $n \geq 1$.

\subsection{Definition of $\hat{x}(z)^{\odot n}$}

One can inductively define $\hat{x}(z)^{\odot n}$:
\be
\hat{x}(z)^{\odot n}= \hat{x}(z) \odot \hat{x}(z)^{\odot n-1}.
\ee
For example,
\ben
&& \hat{x}(z+\epsilon) \cdot \hat{x}(z)^{\odot 2}
= \hat{x}(z+\epsilon) \cdot (:\hat{x}(z)^2: + \frac{1}{4z^2}) \\
& = & :\hat{x}(z+\epsilon)\hat{x}(z)^2: + 2 \corr{\hat{x}(z+\epsilon)\hat{x}(z)} \cdot \hat{x}(z)
+\frac{\hat{x}(z+\epsilon)}{4z^2} \\
& = &  :\hat{x}(z+\epsilon)\hat{x}(z)^2: + 2 \biggl(\frac{2}{\epsilon^2} +  \frac{1}{4z^2} - \frac{\epsilon}{4z^3} + \cdots
\biggr) \cdot \hat{x}(z)
+\frac{\hat{x}(z+\epsilon)}{4z^2},
\een
the regularized product
$\hat{x}(z) \cdot \hat{x}(z)^{\odot 2}$ is defined by:
\ben
\hat{x}(z) \odot \hat{x}(z)^{\odot 2}
= \lim_{\epsilon \to 0} \biggl( \hat{x}(z+\epsilon) \cdot \hat{x}(z)^{\odot 2} - \frac{4}{\epsilon^2} \hat{x}(z) \biggr)
= :\hat{x}(z)^3: +\frac{3\hat{x}(z)}{4z^2}.
\een

By induction one can easily prove that

\begin{thm}
For $n \geq 1$,
\be
\hat{x}(z)^{\odot  n}
= \sum_{j=0}^{[n/2]} \frac{1}{j!} \binom{n}{2, \dots, 2, n-2j} \frac{1}{(4z^2)^j}
:\hat{x}(z)^{n-2j}:.
\ee
\end{thm}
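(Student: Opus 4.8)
The plan is to prove the formula by induction on $n$, using the defining recursion $\hat{x}(z)^{\odot n}=\hat{x}(z)\odot\hat{x}(z)^{\odot(n-1)}$ together with the base cases $n=1$ (where $\hat{x}(z)^{\odot 1}=\hat{x}(z)=\,:\hat{x}(z):$) and $n=2$, which is exactly \eqref{eqn:x(z)odot2}. The single computational ingredient I need is a contraction lemma describing how $\hat{x}(z)\odot(-)$ acts on one normally ordered monomial:
\be
\hat{x}(z)\odot :\hat{x}(z)^k: \;=\; :\hat{x}(z)^{k+1}: \;+\; \frac{k}{4z^2}\,:\hat{x}(z)^{k-1}:.
\ee
Granting this, the whole theorem will follow by linearity, since the prefactors $1/(4z^2)^j$ are $c$-numbers that simply ride along through the regularized product.

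First I would establish the lemma. Apply Wick's theorem to a single external field hitting a normally ordered product:
\be
\hat{x}(z+\epsilon)\,:\hat{x}(z)^k: \;=\; :\hat{x}(z+\epsilon)\hat{x}(z)^k: \;+\; k\,\corr{\hat{x}(z+\epsilon)\hat{x}(z)}\,:\hat{x}(z)^{k-1}:,
\ee
where only a single contraction can occur because there is just one field to contract into the product. Inserting the expansion $\corr{\hat{x}(z+\epsilon)\hat{x}(z)}=\frac{2}{\epsilon^2}+\frac{1}{4z^2}-\frac{\epsilon}{4z^3}+\cdots$ already recorded in \S\ref{sec:Quantum-Deformation-Theory}, the divergent part of the right-hand side is precisely $\frac{2k}{\epsilon^2}:\hat{x}(z)^{k-1}:$. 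Removing it according to the definition of $\odot$ and letting $\epsilon\to 0$ leaves the lemma, since $:\hat{x}(z+\epsilon)\hat{x}(z)^k:\,\to\,:\hat{x}(z)^{k+1}:$ and the $O(\epsilon)$ tail vanishes. Because the singular pieces contributed by the individual monomials of $\hat{x}(z)^{\odot(n-1)}$ add up term by term, the counterterm in the definition of $\odot$ is exactly their sum, so the operation extends linearly, as claimed.

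Next I would run the inductive step. Writing $a_{n,j}=\frac{1}{j!}\binom{n}{2,\dots,2,n-2j}=\frac{n!}{j!\,2^j\,(n-2j)!}$, I substitute the inductive hypothesis into the recursion and apply the lemma to each monomial with $k=n-2j$. Reindexing the resulting second sum by $j\mapsto j-1$ and collecting coefficients, the theorem for $n+1$ reduces to the single Pascal-type identity
\be
a_{n+1,j} \;=\; a_{n,j} + (n-2j+2)\,a_{n,j-1},
\ee
which I verify by factoring out $\frac{n!}{j!\,2^j\,(n+1-2j)!}$ and using $(n+1-2j)+2j=n+1$.

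The hard part will be the lemma rather than the algebra: one must confirm that no multiple contractions arise (they cannot, with a single external field), that the regularization counterterm genuinely coincides with the full $1/\epsilon^2$ coefficient of the product (legitimate here because the inductive hypothesis supplies the explicit form of $\hat{x}(z)^{\odot(n-1)}$), and that the finite remnant of the propagator is exactly $\frac{1}{4z^2}$ with the $O(\epsilon)$ corrections dropping out in the limit. Once this is pinned down the combinatorial identity is routine. As an independent check I would note that $a_{n,j}$ counts the partial matchings with $j$ edges on $n$ points, so the formula is simply the Wick--Hermite expansion relating the $\odot$-powers of $\hat{x}(z)$ to its normally ordered powers with the single propagator $\frac{1}{4z^2}$; this also suggests a non-inductive derivation through generating functions should one prefer it.
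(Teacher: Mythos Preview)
Your proposal is correct and follows exactly the approach the paper indicates: the paper's entire proof is the single line ``By induction one can easily prove that'', and you have simply supplied the details of that induction, namely the Wick contraction lemma $\hat{x}(z)\odot{:}\hat{x}(z)^k{:}={:}\hat{x}(z)^{k+1}{:}+\frac{k}{4z^2}{:}\hat{x}(z)^{k-1}{:}$ and the Pascal-type recursion $a_{n+1,j}=a_{n,j}+(n-2j+2)a_{n,j-1}$, both of which are sound.
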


\begin{cor}
For $n \geq 1$,
\be
: \hat{x}(z)^{n}:
= \sum_{j=0}^{[n/2]} \frac{(-1)^j}{j!} \binom{n}{2, \dots, 2, n-2j} \frac{1}{(4z^2)^j}
\hat{x}(z)^{\odot n-2j}.
\ee
\end{cor}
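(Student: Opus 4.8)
The plan is to read the Corollary as the formal inversion of the preceding Theorem, which already writes each regularized power $\hat{x}(z)^{\odot n}$ as a finite combination of normally ordered powers $:\hat{x}(z)^{n-2j}:$. First I would record the coefficient simplification
\[
\frac{1}{j!}\binom{n}{2,\dots,2,n-2j} = \binom{n}{2j}(2j-1)!!,
\]
which follows from $\binom{n}{2,\dots,2,n-2j}=\frac{n!}{2^j(n-2j)!}$ together with $(2j)!=2^jj!(2j-1)!!$. Abbreviating $c=\frac{1}{4z^2}$ and $D(n,j)=\binom{n}{2j}(2j-1)!!$, the Theorem becomes $\hat{x}(z)^{\odot n}=\sum_j D(n,j)\,c^j:\hat{x}(z)^{n-2j}:$, while the Corollary is the assertion $:\hat{x}(z)^n:=\sum_j D(n,j)(-c)^j\hat{x}(z)^{\odot n-2j}$. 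Both sides are linear in the two operator bases, and the coefficients $c^j$ are scalars (functions of $z$), so the entire argument reduces to a scalar identity among the $D(n,j)$.

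The main step is to substitute the Theorem into the right-hand side of the Corollary and collect, for each $m$, all contributions to $:\hat{x}(z)^{n-2m}:$. Writing $m=i+j$, the coefficient of $:\hat{x}(z)^{n-2m}:$ is $c^m$ times
\[
\sum_{j=0}^m(-1)^j D(n,j)\,D(n-2j,m-j).
\]
Using $\binom{n}{2j}\binom{n-2j}{2m-2j}=\binom{n}{2m}\binom{2m}{2j}$ and $(2j-1)!!/(2j)!=1/(2^jj!)$, this inner sum collapses to
\[
\binom{n}{2m}\frac{(2m)!}{2^m m!}\sum_{j=0}^m(-1)^j\binom{m}{j}
=\binom{n}{2m}\frac{(2m)!}{2^m m!}(1-1)^m,
\]
which equals $\delta_{m,0}$. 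Hence only the $m=0$ term survives and the right-hand side reduces to $:\hat{x}(z)^n:$ itself, as required.

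I would also note the conceptual one-line reformulation: forming the exponential generating series $A(t)=\sum_n\hat{x}(z)^{\odot n}t^n/n!$ and $B(t)=\sum_n:\hat{x}(z)^n:t^n/n!$, the Theorem is precisely $A(t)=e^{ct^2/2}B(t)$, since $\sum_j(2j-1)!!\,c^j t^{2j}/(2j)!=e^{ct^2/2}$; the Corollary is then the trivially inverted relation $B(t)=e^{-ct^2/2}A(t)$, and comparing coefficients of $t^n/n!$ yields the stated formula. I expect to present the direct substitution as the main proof and keep the generating-function identity as a sanity check.

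There is no genuine obstacle here: the passage from the Theorem to the Corollary is exactly the statement that the lower-triangular change of basis given by the Bessel numbers $D(n,j)c^j$ and its signed counterpart $D(n,j)(-c)^j$ are mutually inverse. The only point requiring care is the double-factorial bookkeeping, namely reducing the product of binomials and double factorials to the clean binomial-theorem form so that the inner sum becomes $(1-1)^m$; once the cancellation $\sum_{j=0}^m(-1)^j\binom{m}{j}=\delta_{m,0}$ is isolated, the result is immediate.
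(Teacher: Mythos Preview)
Your proposal is correct. The direct-substitution argument is clean: the reduction of $D(n,j)D(n-2j,m-j)$ to $\binom{n}{2m}\frac{(2m)!}{2^m m!}\binom{m}{j}$ is accurate, and the inner sum then collapses to $\delta_{m,0}$ as you say.

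The paper, however, takes as its actual proof precisely the generating-function argument you mention only as a sanity check: it computes
\[
\sum_{n\geq 0}\hat{x}(z)^{\odot n}\frac{t^n}{n!}=\exp\Bigl(\frac{t^2}{8z^2}\Bigr)\sum_{k\geq 0}:\hat{x}(z)^k:\frac{t^k}{k!}
\]
directly from the Theorem, then multiplies by $\exp(-t^2/(8z^2))$ and reads off coefficients. Your primary route is thus genuinely different in style: the direct substitution is more elementary and makes the inversion of the lower-triangular Bessel-number matrix completely explicit, at the cost of some double-factorial bookkeeping; the paper's generating-series route hides that bookkeeping inside the exponential identity and makes the inversion a one-liner. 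Since you already have both, you might consider promoting the generating-function version to the main argument for brevity and keeping the substitution as the verification, mirroring the paper.
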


\begin{proof}
Consider the generating series:
\ben
\sum_{n \geq 0} \hat{x}(z)^{\odot n} \frac{t^n}{n!}
& = & \sum_{n \geq 0}  \frac{t^n}{n!}  \sum_{j=0}^{[n/2]} \frac{1}{j!} \binom{n}{2, \dots, 2, n-2j} \frac{1}{(4z^2)^j}
: \hat{x}(z)^{n-2j} : \\
& = & \sum_{j=0}^\infty \frac{t^{2j}}{j!(8z^2)^j}
\sum_{k = 0}^\infty : \hat{x}(z)^k: \frac{t^k}{k!} \\
& = & \exp (\frac{t^{2}}{8z^2})
\sum_{k = 0}^\infty : \hat{x}(z)^k: \frac{t^k}{k!},
\een
and so
\ben
\sum_{k = 0}^\infty : \hat{x}(z)^k: \frac{t^k}{k!}
& = & \exp (-\frac{t^{2}}{8z^2}) \sum_{n \geq 0} \hat{x}(z)^{\odot n} \frac{t^n}{n!} \\
& = & \sum_{n \geq 0} \frac{t^n}{n!}\sum_{j=0}^{[n/2]} \frac{(-1)^j}{j!} \binom{n}{2, \dots, 2, n-2j} \frac{1}{(4z^2)^j}
\hat{x}(z)^{\odot n-2j}.
\een
\end{proof}

\subsection{Relationship with Bessel polynomials}

Using The On-Line Encyclopedia of Integer Sequences,
one sees that
the coefficients
\ben
&& T(n,j) = \frac{1}{j!}  \binom{n}{2, \dots, 2, n-2j}= \frac{n!}{2^jj!(n-2j)!}= \binom{n}{2j}(2j-1)!!
\een
have the following exponential generating series:
\ben
\sum_{n=0}^\infty \sum_{j=0}^{[n/2]} T(n,j) \frac{z^n}{n!} t^j = \exp ( z + \half tz^2).
\een
These numbers are called {\em Bessel numbers} \cite{Choi-Smith} because
if one sets
\be
y_n(x) = \sum_{k=0}^n T(2n-k,n-k) x^{n-k} = \sum_{k=0}^n \frac{(2n-k)!}{2^{n-k}k!(n-k)!} x^{n-k},
\ee
then $y_n$ is the $n$-th Bessel polynomial that satisfies the Bessel equation:
\be
x^2y''_n +(2x+2)y'_n=n(n+1)y_n.
\ee

\subsection{Regularized products of $\hat{x}^{\odot m}$ with $\hat{x}^{\odot n}$}

Next we define $\hat{x}^{\odot m} \odot \hat{x}^{\odot n}$ for  general $m, n$ in the same fashion.
Let us first define $:\hat{x}(z)^m: \odot :\hat{x}(z)^n:$.
We will first assume that $m \geq n$,
then by Wick's theorem,
\ben
&& :\hat{x}(z+\epsilon)^m: \cdot :\hat{x}(z)^n: = :\hat{x}(z+\epsilon)^m \hat{x}(z)^n: \\
& + & mn :\hat{x}(z+\epsilon)^{m-1} \hat{x}(z)^{n-1}: \cdot \corr{\hat{x}(z+\epsilon) \hat{x}(z)} \\
& + & 2!\cdot \binom{m}{2}\cdot \binom{n}{2} :\hat{x}(z+\epsilon)^{m-2} \hat{x}(z)^{n-2}:
\cdot \corr{\hat{x}(z+\epsilon) \hat{x}(z)}^2 \\
& + & \cdots\cdots \cdots \\
& + & n!\cdot \binom{m}{n}\cdot \binom{n}{n} :\hat{x}(z+\epsilon)^{m-n} \hat{x}(z)^{n-n}:
\cdot \corr{\hat{x}(z+\epsilon) \hat{x}(z)}^n.
\een
We have a similar expression for $m < n$.
Then after removing all the obvious singularities from
$\corr{\hat{x}(z+\epsilon) \hat{x}(z)}^j$ for $j=1, \dots, n$,
and taking $\epsilon \to 0$,
one gets:
\ben
&& :\hat{x}(z)^m: \odot :\hat{x}(z)^n: = \sum_{j=0}^{\min\{m,n\}} \frac{j!}{(4z^2)^j} \cdot
\binom{m}{j}\binom{n}{j} :\hat{x}(z)^{m+n-2j}:.
\een

\begin{thm}
For $m, n \geq 1$,
\be
\hat{x}^{\odot m} \odot \hat{x}^{\odot n} = \hat{x}^{\odot m+ n}.
\ee
\end{thm}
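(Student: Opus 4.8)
The plan is to prove this identity through exponential generating functions, reducing everything to the two normally-ordered building blocks already computed in this section. Write $G(t) = \sum_{n\geq 0}\hat{x}(z)^{\odot n}\frac{t^n}{n!}$ and $N(t)=\sum_{k\geq 0}:\hat{x}(z)^k:\frac{t^k}{k!}$. The generating-series computation carried out in the proof of the preceding Corollary furnishes the fundamental relation $G(t)=\exp\bigl(\frac{t^2}{8z^2}\bigr)N(t)$, and I will exploit it twice below.

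First I would record the generating function of the regularized product of two normally-ordered fields. Starting from the displayed formula $:\hat{x}^m:\odot:\hat{x}^n: = \sum_{j}\frac{j!}{(4z^2)^j}\binom mj\binom nj:\hat{x}^{m+n-2j}:$, a direct manipulation (use $\frac1{m!}\binom mj=\frac1{j!(m-j)!}$, shift $m\mapsto m-j$ and $n\mapsto n-j$, and recognize the binomial convolution $\sum_{a+b=\ell}\frac{s^at^b}{a!b!}=\frac{(s+t)^\ell}{\ell!}$) yields
\be
\Phi(s,t):=\sum_{m,n\geq 0}\bigl(:\hat{x}^m:\odot:\hat{x}^n:\bigr)\frac{s^mt^n}{m!n!}
=\exp\Bigl(\frac{st}{4z^2}\Bigr)\,N(s+t).
\ee

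The heart of the argument is to express $\hat{x}^{\odot m}\odot\hat{x}^{\odot n}$ through this $\Phi$. I would first argue that the point-splitting regularization defining $\odot$ is bilinear over the coefficients: writing $\hat{x}(z)^{\odot m}=\sum_a c_a^{(m)}(z):\hat{x}(z)^a:$ with $c_a^{(m)}(z)=\frac{T(m,(m-a)/2)}{(4z^2)^{(m-a)/2}}$, one has $\hat{x}^{\odot m}\odot\hat{x}^{\odot n}=\sum_{a,b}c_a^{(m)}c_b^{(n)}\bigl(:\hat{x}^a:\odot:\hat{x}^b:\bigr)$. This is the step that needs the most care, since in the defining limit the first factor enters as $\hat{x}(z+\epsilon)^{\odot m}$, whose coefficients are evaluated at $z+\epsilon$; as $\epsilon\to0$ these coefficients tend to $c^{(m)}_a(z)$ while the singular contractions between the two factors reassemble exactly into $:\hat{x}(z)^a:\odot:\hat{x}(z)^b:$, so the bilinear identity holds after the limit. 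From the fundamental relation, the column generating functions are then $\sum_m c_a^{(m)}\frac{s^m}{m!}=\exp\bigl(\frac{s^2}{8z^2}\bigr)\frac{s^a}{a!}$.

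Finally I would assemble the double generating function
\be
H(s,t):=\sum_{m,n}\bigl(\hat{x}^{\odot m}\odot\hat{x}^{\odot n}\bigr)\frac{s^mt^n}{m!n!}
=\exp\Bigl(\frac{s^2+t^2}{8z^2}\Bigr)\,\Phi(s,t)
=\exp\Bigl(\frac{(s+t)^2}{8z^2}\Bigr)N(s+t)=G(s+t),
\ee
where the middle equality inserts the two column generating functions and the last equality is again the fundamental relation. Since $G(s+t)=\sum_k\hat{x}^{\odot k}\frac{(s+t)^k}{k!}=\sum_{m,n}\hat{x}^{\odot(m+n)}\frac{s^mt^n}{m!n!}$, comparing coefficients of $\frac{s^mt^n}{m!n!}$ gives $\hat{x}^{\odot m}\odot\hat{x}^{\odot n}=\hat{x}^{\odot(m+n)}$ for all $m,n\geq 1$. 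The only genuinely delicate point is the bilinearity claim in the $\epsilon\to0$ limit; once that is secured, the remainder is routine bookkeeping with exponential generating functions.
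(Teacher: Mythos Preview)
Your argument is correct and follows essentially the same route as the paper: compute the double generating function of $:\hat{x}^m:\odot:\hat{x}^n:$, rewrite it via the relation $G(t)=\exp\bigl(\tfrac{t^2}{8z^2}\bigr)N(t)$, and conclude $G(s)\odot G(t)=G(s+t)$. The only difference is that you explicitly isolate and flag the bilinearity of $\odot$ in the $\epsilon\to0$ limit as the delicate step, whereas the paper simply distributes $\odot$ over the sums without comment; your version is in that respect more scrupulous than the original.
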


\begin{proof}
We have
\ben
&& \sum_{m \geq 0} :\hat{x}(z)^m: \frac{t_1^m}{m!} \odot
\sum_{n\geq 0} :\hat{x}(z)^n: \frac{t_2^n}{n!} \\
& = & \sum_{m, n \geq 0} \frac{t_1^m}{m!} \frac{t_2^n}{n!}
\sum_{j=0}^{\min\{m,n\}} \frac{j!}{(4z^2)^j}  \cdot
\binom{m}{j}\binom{n}{j} :\hat{x}(z)^{m+n-2j}: \\
& = & \sum_{m, n, j  \geq 0} \frac{t_1^{m+j}}{m!} \frac{t_2^{(n+j)}}{n!} \frac{:\hat{x}(z)^{m+n}:}{j!(4z^2)^j} \\
& = & \sum_{j \geq 0} \frac{(t_1t_2)^j}{j!(4z^2)^j} \sum_{k \geq 0} (t_1+t_2)^k \frac{:\hat{x}(z)^k:}{k!} \\
& = & \exp (\frac{t_1t_2}{4z^2} ) \cdot \exp(-\frac{(t_1+t_2)^2}{8z^2})
\cdot \sum_{n \geq 0} \hat{x}(z)^{\odot n} \frac{(t_1+t_2)^n}{n!} \\
& = & \exp(-\frac{t_1^2+t_2^2}{8z^2})
\cdot \sum_{n \geq 0} \hat{x}(z)^{\odot n} \frac{(t_1+t_2)^n}{n!}.
\een
It follows that
\ben
&& :\hat{x}(z)^m: \odot :\hat{x}(z)^n: = \sum_{j=0}^{\min\{m,n\}} \frac{j!}{(4z^2)^j} \cdot
\binom{m}{j}\binom{n}{j} :\hat{x}(z)^{m+n-2j}:.
\een
It follows that
\ben
&& \sum_{m \geq 0} \hat{x}(z)^{\odot m} \frac{t_1^m}{m!} \odot
\sum_{n\geq 0} \hat{x}(z)^{\odot n} \frac{t_2^n}{n!}
= \sum_{n \geq 0} \hat{x}(z)^{\odot n} \frac{(t_1+t_2)^n}{n!}.
\een
This completes the proof.
\end{proof}

\begin{cor}
The regularized product $\odot$ is associative,
i.e.,
\be
(\hat{x}(z)^{\odot l} \odot \hat{x}(z)^{\odot m}) \odot \hat{x}(z)^{\odot n} \\
= \hat{x}(z)^{\odot l} \odot (\hat{x}(z)^{\odot m} \odot \hat{x}(z)^{\odot n}).
\ee
\end{cor}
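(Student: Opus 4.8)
The plan is to deduce the corollary immediately from the preceding theorem, which asserts $\hat{x}(z)^{\odot m} \odot \hat{x}(z)^{\odot n} = \hat{x}(z)^{\odot m+n}$ for all $m, n \geq 1$. The key observation is that this theorem shows the operation $\odot$ simply \emph{adds} the superscripts, so associativity of $\odot$ will follow from the (trivial) associativity of integer addition. First I would evaluate the left-hand side by applying the theorem to the inner product: $\hat{x}(z)^{\odot l} \odot \hat{x}(z)^{\odot m} = \hat{x}(z)^{\odot l+m}$. Since the output is again a regularized power, the operation $\odot$ with $\hat{x}(z)^{\odot n}$ is defined on it, and a second application of the theorem yields $\hat{x}(z)^{\odot l+m} \odot \hat{x}(z)^{\odot n} = \hat{x}(z)^{\odot l+m+n}$.

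Next I would carry out the symmetric computation on the right-hand side: $\hat{x}(z)^{\odot m} \odot \hat{x}(z)^{\odot n} = \hat{x}(z)^{\odot m+n}$, followed by $\hat{x}(z)^{\odot l} \odot \hat{x}(z)^{\odot m+n} = \hat{x}(z)^{\odot l+m+n}$. Both sides therefore reduce to the single expression $\hat{x}(z)^{\odot l+m+n}$, and comparing them establishes the claimed identity. I expect this to be essentially all that is needed, since the theorem has already absorbed the genuine content, namely the combinatorial identity for $:\hat{x}(z)^m: \odot :\hat{x}(z)^n:$ and the exponential generating-function manipulation that converts the regularized product into ordinary addition of exponents via $\exp(-\tfrac{t_1^2+t_2^2}{8z^2})$.

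The only point requiring a word of care is that the regularized product must be closed on the family $\{\hat{x}(z)^{\odot k}\}_{k \geq 0}$, so that iterating $\odot$ twice is meaningful; but this closure is exactly what the theorem provides, since each pairwise product is again a member of the family. Consequently there is no real obstacle here: the corollary is a one-line consequence of the theorem, and the forward-looking step reduces entirely to observing that $(l+m)+n = l+(m+n)$.
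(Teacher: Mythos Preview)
Your proposal is correct and matches the paper's approach: the corollary is stated immediately after the theorem $\hat{x}(z)^{\odot m} \odot \hat{x}(z)^{\odot n} = \hat{x}(z)^{\odot m+n}$ with no separate proof, so the intended argument is exactly the two-line reduction to associativity of integer addition that you describe.
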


\subsection{W-constraints}

We make the following

\begin{conj}
The equalities
\be
(\hat{x}(z)^{\odot 2n})_- Z_{WK} = 0
\ee
hold for all $n \geq 1$.
\end{conj}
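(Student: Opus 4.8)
The plan is to package all the even regularized powers into a single vertex-operator generating series, reduce the whole family of conjectured identities to one functional equation, and interpret that equation as the stability of the Sato-Grassmannian point of $Z_{WK}$ under the quantized field. First I would use the generating series established in \S\ref{sec:W-Constraints},
\be
\sum_{n \geq 0} \hat{x}(z)^{\odot n} \frac{t^n}{n!} = \exp\Bigl( \frac{t^2}{8z^2} \Bigr) : e^{t \hat{x}(z)} :,
\ee
and observe that $: e^{t\hat{x}(z)}:$ factors as a vertex operator $e^{t\hat{x}_{\mathrm{cr}}(z)} e^{t\hat{x}_{\mathrm{ann}}(z)}$, where $\hat{x}_{\mathrm{cr}}$ collects the creators $\beta_{-(2k+1)}$ and $\hat{x}_{\mathrm{ann}}$ the annihilators $\beta_{2k+1}$. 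Writing $V(t,z)$ for the right-hand side, the even powers are recovered as $(2n)!$ times the coefficient of $t^{2n}$ in $\frac12\bigl(V(t,z)+V(-t,z)\bigr)$, so the entire Conjecture is equivalent to the single functional equation $\bigl(V(t,z)+V(-t,z)\bigr)_- Z_{WK} = 0$ in the formal variable $t$.

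Next I would interpret $V(t,z) Z_{WK}$ through the KdV structure. The annihilator factor $e^{t\hat{x}_{\mathrm{ann}}(z)}$ acts as a shift $\tilde u_n \mapsto \tilde u_n - t\, z^{-(2n+3)/2}$ of the coupling constants, while $e^{t\hat{x}_{\mathrm{cr}}(z)}$ multiplies by an explicit function of $z$ and the $\tilde u_n$; thus $V(t,z)Z_{WK}$ becomes $Z_{WK}$ at shifted times times a prefactor, exactly the shape that the Hirota bilinear relations for a KdV tau-function control. The classical shadow of the desired identity is the elementary fact that, on the special deformation, $(x^{2n})_- = 0$ for every $n$: this follows at once from Theorem \ref{thm:Existence}, since $x^2$ is a series in $z = 2y$ with no polar part and hence so is each power $(x^2)^n$. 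The task is to promote this classical vanishing to the quantized statement, where the point-splitting anomaly $\exp(t^2/8z^2)$ and the Bessel-number corrections enter.

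For the inductive engine I would use the associativity result $\hat{x}(z)^{\odot 2n} = \hat{x}(z)^{\odot 2} \odot \hat{x}(z)^{\odot (2n-2)}$ together with the explicit product formula
\be
:\hat{x}(z)^m: \odot :\hat{x}(z)^n: = \sum_{j=0}^{\min\{m,n\}} \frac{j!}{(4z^2)^j} \binom{m}{j}\binom{n}{j} :\hat{x}(z)^{m+n-2j}:,
\ee
arguing by induction on $n$, the base case $n=1$ being the Virasoro constraints \eqref{eqn:Virasoro-Operator-Field}. Assuming $(\hat{x}(z)^{\odot 2k})_- Z_{WK} = 0$ for all $k<n$, I would expand $\bigl(\hat{x}(z)^{\odot 2}\odot\hat{x}(z)^{\odot(2n-2)}\bigr)_-$ and sort the resulting operators into those in which a lowering (negative $z$-power) Virasoro mode can be commuted to the right so as to act first — which vanish by the inductive hypothesis — and the contraction terms carrying the factors $j!/(4z^2)^j$.

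The hard part will be controlling the contraction terms. Because the projection $(\,\cdot\,)_-$ onto negative $z$-powers does not coincide with the splitting into creators and annihilators — the mode $\beta_{-1}$ sits at $z^{-1/2}$ and so lands in the $(\,\cdot\,)_-$ part — the operation $(\,\cdot\,)_-$ does not commute with $\odot$, and the cross-contractions move operators between the raising and lowering ranges. I would need to show that these anomaly contributions reorganize, using the commutator $[L_m,L_n] = (m-n)L_{m+n} + \frac{2m^3+m}{48}\delta_{m,-n}$ and the Bessel combinatorics, precisely into lower-level constraint operators (annihilating $Z_{WK}$ by induction) plus operators that manifestly act as raising operators only after the lowering ones. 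Equivalently, in the Sato-Grassmannian picture this is the assertion that the operators cut out by $(\hat{x}(z)^{\odot 2n})_-$ stabilize the Kontsevich-Witten point and that the quantum normal-ordering correction matches the required central extension at every spin; establishing this matching uniformly in $n$ is the crux, and is where the string equation and the full KdV hierarchy, rather than Virasoro alone, would have to be brought in.
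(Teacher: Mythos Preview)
The paper states this as a \emph{Conjecture}, not a theorem: there is no proof in the paper to compare your proposal against. The author simply poses the statement at the end of \S\ref{sec:W-Constraints} and offers no argument beyond the $n=1$ case, which is the Virasoro constraint theorem proved earlier.

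Your outline is a reasonable strategy, and the classical observation that $(x^{2n})_-=0$ for all $n$ (since $x^2$ is a series in $z$ with no polar part by Theorem~\ref{thm:Existence}) is correct and is the right semiclassical shadow. But you have not closed the argument, and you say so yourself: the ``hard part'' you identify is genuinely hard. The induction via $\hat{x}^{\odot 2n}=\hat{x}^{\odot 2}\odot\hat{x}^{\odot(2n-2)}$ does not propagate the vanishing, because from $(A)_-Z=0$ and $(B)_-Z=0$ one cannot conclude $(A\odot B)_-Z=0$; the regularized product mixes polar and nonpolar parts through the contraction terms $j!/(4z^2)^j$, and the projector $(\cdot)_-$ does not commute with $\odot$. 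Your plan to reorganize the anomalies into lower-level constraint operators plus raising-first operators is a hope, not a mechanism: you have not exhibited the identity that would effect this reorganization, and the appeal to ``the full KdV hierarchy'' and the Sato point is a restatement of the goal rather than a tool for reaching it. In short, what you have written is a plausible roadmap with the decisive step missing, which is consistent with the statement's conjectural status in the paper.
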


{\em Acknowledgements}. 
This research is partially supported by NSFC grant 1171174.


\begin{thebibliography}{999}

\bibitem{ADKMV}
M. Aganagic, R. Dijkgraaf, A. Klemm, M. Mari\~no, C. Vafa,
Topological strings and integrable hierarchies. Comm. Math. Phys.  261  (2006),  no. 2, 451-516.

\bibitem{AKMV}
M. Aganagic, A. Klemm, M. Mari\~no, C. Vafa,
{\em The topological vertex}. Comm. Math. Phys.  254  (2005),  no. 2, 425-478.

\bibitem{AMV}
M. Aganagic, M. Mari\~no, C. Vafa,
{\em All loop topological string amplitudes from Chern-Simons theory}. Comm. Math. Phys.  247  (2004),  no. 2, 467-512.

\bibitem{BCSW}
J. Bennett, D. Cochran, B. Safnuk, K. Woskoff,
{\em Topological recursion for symplectic volumes of moduli spaces of curves}. Mich. Math. J. 61(2) (2012), 331-358.

\bibitem{BCOV1}
M. Bershadsky, S. Cecotti, H. Ooguri, C.  Vafa,
{\em Holomorphic anomalies in topological field theories}. Nuclear Phys. B  405  (1993),  no. 2-3, 279-304.

\bibitem{BCOV2}
M. Bershadsky, S. Cecotti, H. Ooguri, C.  Vafa,
{\em Kodaira-Spencer theory of gravity and exact results for quantum string amplitudes}.
Comm. Math. Phys.  165  (1994),  no. 2, 311-427.

\bibitem{BKMP}
V. Bouchard, A. Klemm, M. Mari\~no, S. Pasquetti,
{\em Remodeling the B-model}. Comm. Math. Phys.  287  (2009),  no. 1, 117-178.

\bibitem{CDGP}
P. Candelas, X. C. de la Ossa, P.S. Green, L. Parkes,
{\em A pair of Calabi-Yau manifolds as an exactly soluble superconformal theory}.
Nuclear Phys. B  359  (1991),  no. 1, 21-74.

\bibitem{CKYZ}
T.-M. Chiang, A. Klemm, S.-T. Yau, E. Zaslow,
{\em Local mirror symmetry: calculations and interpretations}.
Adv. Theor. Math. Phys.  3  (1999),  no. 3, 495-565.

\bibitem{Choi-Smith}
J.Y. Choi, J.D. Smith,
{\em On the unimodality and combinatorics of Bessel numbers}.
The 2000 Com$^2$MaC Conference on Association Schemes, Codes and Designs (Pohang). Discrete Math.  264  (2003),  no. 1-3, 45-53.

\bibitem{Costello-Li}
K.J. Costello, S. Li,
{\em Quantum BCOV theory on Calabi-Yau manifolds and the higher genus B-model},
arXiv:1201.4501, 2012

\bibitem{Dubrovin}
B. Dubrovin,
{\em Geometry of 2D topological field theories}.
Integrable systems and quantum groups (Montecatini Terme, 1993),  120-348,
Lecture Notes in Math., 1620, Springer, Berlin, 1996.

\bibitem{DVV1}
R. Dijkgraaf, H. Verlinde, E. Verlinde,
{\em Notes on topological string theory and 2D quantum gravity}.
String theory and quantum gravity (Trieste, 1990),  91¨C156, World Sci. Publ., River Edge, NJ, 1991.

\bibitem{DVV2}
R. Dijkgraaf, H. Verlinde, E. Verlinde,
{\em Topological strings in $d<1$}. Nuclear Phys. B  352  (1991),  no. 1, 59-86.

\bibitem{DVV-Virasoro}
R. Dijkgraaf, H. Verlinde, E. Verlinde,
{\em Loop equations and Virasoro constraints in nonperturbative two-dimensional quantum gravity}.
Nuclear Phys. B 348 (1991), no. 3, 435-456.


\bibitem{E-K-Y-Y}
T. Eguchi, H. Kanno, Y. Yamada, S.-K. Yang,
{\em Topological strings, flat coordinates and gravitational descendants}.
Phys. Lett. B  305  (1993),  no. 3, 235-241.

\bibitem{E-Y-Y}
T. Eguchi,  Y. Yamada, S.-K. Yang,
{\em Topological field theories and the period integrals}. Modern Phys. Lett. A  8  (1993),  no. 17, 1627-1637.

\bibitem{Eynard}
B. Eynard,
{\em Recursion between Mumford volumes of moduli spaces}.
Ann. Henri Poincar\'e 12(8), 1431-1447 (2011).

\bibitem{EO}
B. Eynard, N. Orantin
{\em Invariants of algebraic curves and topological expansion}.
Commun. Number Theory Phys. 1 (2007), no. 2, 347-452.


\bibitem{EO2}
B. Eynard, N. Orantin
{\em Computation of open Gromov-Witten invariants for toric Calabi-Yau 3-folds by topological recursion, a proof of the BKMP conjecture},
arXiv:1205.1103.



\bibitem{FJRW}
H. Fan, T. Jarvis, Y. Ruan,
{\em The Witten equation, mirror symmetry and quantum singularity theory},
Ann. Math. 178 (2013), 1-106.



\bibitem{Fukuma-Kawai-Nakayama1}
M. Fukuma, H. Kawai, R. Nakayama,
{\em  Continuum Schwinger-Dyson equations and universal structures in two-dimensional quantum gravity}.
Internat. J. Modern Phys. A 6 (1991), no. 8, 1385-1406.

\bibitem{Fukuma-Kawai-Nakayama2}
M. Fukuma, H. Kawai, R. Nakayama,
Infinite-dimensional Grassmannian structure of two-dimensional quantum gravity.
Comm. Math. Phys. 143 (1992), no. 2, 371-403.

\bibitem{Givental}
A. B. Givental,
{e\ mEquivariant Gromov-Witten invariants}.
Internat. Math. Res. Notices  1996,  no. 13, 613-663.

\bibitem{Givental-Quantization}
A. Givental, {\em Gromov-Witten invariants and quantization of quadratic Hamiltonians},
Dedicated to the memory of I. G. Petrovskii on the occasion of his 100th anniversary,
Mosc.Math. J. {\bf 1} (2001), no. 4, 551-568.

\bibitem{GV}
R. Gopakumar,  C.Vafa,
{\em On the gauge theory/geometry correspondence}.
Adv. Theor. Math. Phys.  3  (1999),  no. 5, 1415-1443.



\bibitem{Guo-Zhou1}
S. Guo, J. Zhou,
{\em Gopakumar-Vafa BPS invariants, Hilbert schemes and quasimodular forms. I}
arXiv:1208.3270.

\bibitem{Guo-Zhou2}
S. Guo, J. Zhou,
{\em
Gopakumar-Vafa BPS invariants, Hilbert schemes and quasimodular forms. II}.
Preprint, 2013.

\bibitem{Hori-Vafa}
K. Hori, C. Vafa,
Mirror symmetry, arXiv:hep-th/0002222.

\bibitem{Hua-Kle-Qua}
M.-x. Huang, A. Klemm, S. Quackenbush,
{\em Topological string theory on compact Calabi-Yau: modularity and boundary conditions}.
Homological mirror symmetry,  45-102, Lecture Notes in Phys., 757, Springer, Berlin, 2009.

\bibitem{Kontsevich} M. Kontsevich, {\em
Intersection theory on the moduli space of curves and the matrix
Airy function}. Comm. Math. Phys. {\bf 147} (1992), no. 1, 1--23.

\bibitem{Kontsevich-Manin}
M. Kontsevich, Y. Manin,
{\em Gromov-Witten classes, quantum cohomology, and enumerative geometry}. Comm. Math. Phys.  164  (1994),  no. 3, 525-562.

\bibitem{LLY}
B. Lian, K. Liu, S.-T. Yau,
{\em Mirror principle}. I. Asian J. Math.  1  (1997),  no. 4, 729-763.

\bibitem{LLZ}
 C.-C. M. Liu, K. Liu, J. Zhou
{\em A proof of a conjecture of Mari\~no-Vafa on Hodge integrals}. J. Differential Geom.  65  (2003),  no. 2, 289-340.

\bibitem{LLZ2}
 C.-C. M. Liu, K. Liu, J. Zhou
{\em A formula of two-partition Hodge integrals}. J. Amer. Math. Soc.  20  (2007),  no. 1, 149-184.

\bibitem{LLLZ}
J. Li, C.-C. M. Liu, K. Liu, J. Zhou,
{\em A mathematical theory of the topological vertex}. Geom. Topol.  13  (2009),  no. 1, 527-621.


\bibitem{Losev}
A. Losev,
{\em Descendants constructed from matter field in Landau-Ginzburg theories coupled to topological gravity}.  Teoret. Mat. Fiz.  95  (1993),  no. 2, 307-316;
translation in  Theoret. and Math. Phys.  95  (1993),  no. 2, 595-603

\bibitem{Marino}
M. Mari\~no,
{\em Open string amplitudes and large order behavior in topological string theory}.
J. High Energy Phys.  2008,  no. 3, 060, 34 pp.

\bibitem{Mukherjee}
A. Mukherjee,
{\em 2D gravity coupled to topological minimal models}.
Internat. J. Modern Phys. A  10  (1995),  no. 32, 4601-4632.

\bibitem{Peng}
P. Peng,
{\em A simple proof of Gopakumar-Vafa conjecture for local toric Calabi-Yau manifolds}. Comm. Math. Phys.  276  (2007),  no. 2, 551-569.


\bibitem{Shadrin}
S. V. Shadrin,
{\em  Geometry of meromorphic functions and intersections on moduli spaces of curves}. Int. Math. Res. Not. 2003, no. 38, 2051-2094.

\bibitem{Witten-CS-Jones}
E. Witten,
{\em  Quantum field theory and the Jones polynomial}. Comm. Math. Phys.  121  (1989),  no. 3, 351-399.

\bibitem{Witten1} E. Witten,
{\it Two-dimensional gravity and intersection theory on moduli
space}, Surveys in Differential Geometry, vol.1, (1991) 243--310.

\bibitem{Witten-Q-background}
E. Witten,
{\em Quantum background independence in string theory},
arXiv:hep-th/9306122.

\bibitem{Witten-CS-String}
E. Witten,
{\em Chern-Simons gauge theory as a string theory}.
The Floer Memorial Volume
Progress in Mathematics Volume 133,  1995,   637-678.

\bibitem{Zhou1}
J. Zhou,
{\em Topological recursions of Eynard-Orantin type for intersection numbers on moduli spaces of curves}.
Lett. Math. Phys.  103  (2013),  no. 11, 1191-1206.


\bibitem{Zhou2}
J. Zhou,
{\em Intersection numbers on Deligne-Mumford moduli spaces and quantum Airy curve}.
arXiv:1206.5896.

\bibitem{Zinger}
A. Zinger,
{\em The reduced genus 1 Gromov-Witten invariants of Calabi-Yau hypersurfaces}.
J. Amer. Math. Soc.  22  (2009),  no. 3, 691-737.

\end{thebibliography}
\end{document}